\documentclass[11pt,a4]{article}
\usepackage{latexsym}
\usepackage{amsmath}
\usepackage{amssymb}
\usepackage{amsthm}
\usepackage{amscd}
\usepackage{color}
\definecolor{red}{rgb}{0.7,0,0}

\setlength{\topmargin}{0in}
\setlength{\oddsidemargin}{0.35in}
\setlength{\evensidemargin}{0.35in}
\setlength{\textwidth}{5.7in}
\setlength{\textheight}{8.7in}
\setlength{\parskip}{3mm}

\newtheorem{theorem}{Theorem}[section]
\newtheorem{lemma}[theorem]{Lemma}
\newtheorem{corollary}[theorem]{Corollary}
\newtheorem{proposition}[theorem]{Proposition}

\theoremstyle{definition}
\newtheorem{definition}[theorem]{Definition}

\newtheorem{remark}[theorem]{Remark}
\newtheorem{example}[theorem]{Example}

\theoremstyle{remark}

\renewcommand{\eqref}[1]{(\ref{#1})}

\renewcommand{\bigskip}{\vspace{0.2cm}}

\begin{document}

\title{Mixed Morrey spaces}

\author{Toru Nogayama \footnote{toru.nogayama@gmail.com,
Tokyo Metropolitan University,
Department of Mathematics Science,
1-1 Minami-Ohsawa, Hachioji, 192-0397, Tokyo, Japan}}

\date{}

\maketitle

\begin{abstract}
We introduce mixed Morrey spaces and show some basic properties. These properties extend the classical ones. 
We investigate the boundedness in these spaces of the iterated maximal operator, the fractional integtral operator and singular integral operator.
Furthermore, as a corollary, we obtain the boundedness of the iterated maximal operator in classical Morrey spaces.
We also establish a version of the Fefferman--Stein vector-valued maximal inequality and some weighted inequalities for the iterated maximal operator in mixed Lebesgue spaces. We point out some errors in the proof of the existing literature.
\end{abstract}

{\bf Key words}  Morrey spaces, Mixed norm, Hardy--Littlewood maximal operator, 
Fefferman--Stein vector-valued inequality, Fractional integral operator, Singular integral operator

{\bf 2010 Classification} 42B25, 42B35

\section{Introduction}
In 1961, Benedek and Panzone \cite{B-P} introduced Lebesgue spaces with mixed norm.
Bagby \cite{Bagby} showed the boundedness of the Hardy--Littlewood maximal operator for the functions taking values
in the mixed Lebesgue spaces.
Meanwhile, Morrey spaces are used to consider the boundedness of the elliptic differential operators \cite{Morrey}.
Later, many authors investigated Morrey spaces, see for example \cite{Peetre}.

In this paper, we introduce the {\it mixed Morrey space} ${\mathcal M}_{\vec{q}}^p(\mathbb{R}^n)$.
When we take a particular parameter, this space coincides with the mixed Lebesgue space $L^{\vec{p}}(\mathbb{R}^n)$ and the classical Morrey space $\mathcal{M}_q^p(\mathbb{R}^n)$. 
Our main target is the iterated maximal operator, which is obtained by repeatedly acting the one-dimentional maximal operator.
We show the boundedness of the iterated maximal operator in mixed spaces.
In particular, the boundedness in mixed Lebesgue spaces is showed by St\"{o}ckert in 1978 \cite{St}.
However, the proof is incorrect. We give a correct proof using the result of Bagby \cite{Bagby}.
Moreover, we prove some inequalities in harmonic analysis for the mixed spaces.


Throughout the paper, we use the following notation.
The letters $\vec{p}, \vec{q}, \vec{r}, \ldots$ will denote 
$n$-tuples of the numbers in $[0, \infty]$ $(n \ge 1)$, 
$\vec{p}=(p_1, \ldots, p_n), 
\vec{q}=(q_1, \ldots, q_n), \vec{r}=(r_1, \ldots, r_n)$. 
By definiton, the inequality, for example, $0<\vec{p}<\infty$ means that $0<p_i<\infty $ for each $i$.
Furthermore, for $\vec{p}=(p_1, \ldots, p_n)$ and $r \in \mathbb{R}$, let
\[
\frac{1}{\vec{p}}=
\left(\frac{1}{p_1}, \ldots, \frac{1}{p_n}\right),
\quad
\frac{\vec{p}}{r}=
\left(\frac{p_1}{r}, \ldots, \frac{p_n}{r}\right),  
\quad
\vec{p'}=(p'_1, \ldots, p'_n),
\]
where $p'_j=\frac{p_j}{p_j-1}$ is a conjugate exponent of $p_j$.
Let $Q=Q(x, r)$ be a cube having center $x$ and radius $r$, whose sides parallel to the cordinate axes. 
$|Q|$ denotes the volume of the cube $Q$ and $\ell(Q)$ denotes the side length of the cube $Q$.
By $A\lesssim B$, we denote that $A \le CB$ for some constant $C>0$, and $A \sim B$ means that 
$A\lesssim B$ and $B\lesssim A$. 

In \cite{B-P}, Benedek and Panzone introduced mixed Lebesgue spaces. 
We recall some properties and examples in Section 
\ref{sec preliminaries}.

\begin{definition}[{\it Mixed Lebesgue spaces}]{\rm \cite{B-P}}
Let $\vec{p}=(p_1, \ldots, p_n) \in (0, \infty]^n$.
Then define the {\it mixed Lebesgue norm}
$\|\cdot\|_{\vec{p}}$ or $\|\cdot\|_{(p_1,\ldots,p_n)}$ by
\begin{align*}
\|f\|_{\vec{p}}
&=
\|f\|_{(p_1,\ldots,p_n)}\\
&\equiv
\left(
\int_{{\mathbb R}} \cdots
\left(
\int_{{\mathbb R}}
\left(
\int_{{\mathbb R}}|f(x_1,x_2,\ldots,x_n)|^{p_1}{\rm d}x_1
\right)^{\frac{p_2}{p_1}}
{\rm d}x_2
\right)^{\frac{p_3}{p_2}}
\cdots{\rm d}x_n
\right)^{\frac{1}{p_n}},
\end{align*}
where $f: \mathbb{R}^n \rightarrow \mathbb{C}$ 
is a measurable function.
If $p_j=\infty$, then we have to make appropriate modifications.
We define the {\it mixed Lebesgue space}
$L^{\vec{p}}({\mathbb R}^n)=L^{(p_1,\ldots,p_n)}({\mathbb R}^n)$
to be the set of
all $f \in L^0({\mathbb R}^n)$
with
$\|f\|_{\vec{p}}<\infty$, 
where $L^0({\mathbb R}^n)$ denotes the set of measureable functions on ${\mathbb R}^n$.

\end{definition}

For all measureable functions $f$, we define the Hardy--Littlewood maximal operator $M$ by
\[
Mf(x)
=\sup_{Q \in \mathcal{Q}}
\frac{\chi_Q(x)}{|Q|}
\int_{Q}
|f(y)|
{\rm d}y,
\]
where $\mathcal{Q}$ denotes the set of all cubes in ${\mathbb R}^n$.
Let $1 \le k \le n$. Then, we define the maximal operator $M_k$ for $x_k$ as follows:
\[
M_kf(x)
\equiv
\sup_{x_k \in I}\frac{1}{|I|}\int_{I}|f(x_1, \ldots, y_k, \ldots, x_n)| {\rm d}y_k,
\]
where $I$ ranges over all intervals containing $x$.
Furthermore, for all measurable functions $f$, 
define the iterated maximal operator ${\mathcal M}_t$ by
\[
{\mathcal M}_tf(x)
\equiv
\left(
M_n \cdots M_1 \left[|f|^t\right] (x)
\right)^{\frac{1}{t}}
\]
for every $t>0$ and $x \in {\mathbb R}^n$.

We investigate the boundedness of the iterated maximal operator in mixed Lebesgue spaces.

\begin{theorem}\label{thm 171206-1} {\rm (\cite{St})}
Let $0<\vec{p}<\infty$. If $0< t<\min(p_1, \ldots, p_n)$, then
\begin{equation} \label{eq 171028-2}
\|{\mathcal M}_tf\|_{\vec{p}} \lesssim\|f\|_{\vec{p}}
\end{equation}
for $f \in L^{\vec{p}}(\mathbb{R}^n)$.
\end{theorem}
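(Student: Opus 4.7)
The plan is to reduce to the case $t=1$ by a standard rescaling, and then iterate the one-dimensional Hardy--Littlewood maximal inequality in the vector-valued form due to Bagby \cite{Bagby}.

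For the reduction, note that $\mathcal{M}_t f = (M_n\cdots M_1[|f|^t])^{1/t}$, and the definition of the mixed Lebesgue norm gives $\||h|^t\|_{\vec{p}/t}=\|h\|_{\vec{p}}^t$. Hence \eqref{eq 171028-2} is equivalent to
\[
\|M_n\cdots M_1 g\|_{\vec{q}} \lesssim \|g\|_{\vec{q}},
\]
with $g=|f|^t\ge 0$ and $\vec{q}=\vec{p}/t$. Since $0<t<\min(p_1,\ldots,p_n)$, every component $q_i=p_i/t$ exceeds $1$, so it suffices to prove the bound for the iterated maximal operator on any mixed Lebesgue space with $\vec{q}\in(1,\infty)^n$.

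The core estimate I would establish is the ``one-direction'' bound
\[
\|M_k H\|_{\vec{q}} \lesssim \|H\|_{\vec{q}} \qquad (1\le k\le n)
\]
for every measurable $H$ on $\mathbb{R}^n$; iterating it with $H=M_{k-1}\cdots M_1 g$ for $k=1,\ldots,n$ closes the argument. To prove the bound, I would freeze the outer variables $(x_{k+1},\ldots,x_n)$ and regard $H(\,\cdot\,;x_{k+1},\ldots,x_n)$ as a function on $\mathbb{R}^k$. Bagby's theorem then provides
\[
\|M_k H(\,\cdot\,;x_{k+1},\ldots,x_n)\|_{L^{(q_1,\ldots,q_k)}} \lesssim \|H(\,\cdot\,;x_{k+1},\ldots,x_n)\|_{L^{(q_1,\ldots,q_k)}}
\]
with a constant independent of the frozen variables. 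Raising to the $q_{k+1}$-th power, integrating in $x_{k+1}$, and propagating through the remaining outer $L^{q_i}$ norms then yields the one-direction bound.

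The main point to verify is that Bagby's theorem is indeed available in the precise vector-valued mixed-norm form needed at each step $k$---namely, that the $1$D maximal operator in the $x_k$-direction is bounded on the mixed Lebesgue space $L^{(q_1,\ldots,q_k)}$ with constant depending only on $(q_1,\ldots,q_k)$. This is exactly the content of \cite{Bagby} (or follows from it by a short induction on $k$), and once it is granted the rest of the proof is a purely mechanical ``onion-peeling'' through the $n$ iterated norms, free of the gaps in the original argument of \cite{St}.
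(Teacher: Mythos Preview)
Your proposal is correct and follows essentially the same approach as the paper: both reduce to $t=1$ via the rescaling $\|\mathcal{M}_t f\|_{\vec p}=\|M_n\cdots M_1[|f|^t]\|_{\vec p/t}^{1/t}$, and then use Bagby's vector-valued maximal inequality (Lemma~\ref{lem 180113-4}) to establish the one-direction bound $\|M_k H\|_{\vec q}\lesssim\|H\|_{\vec q}$ before iterating. The only cosmetic difference is organizational---the paper packages the iteration as an induction on $n$, applying Bagby just once for $M_n$ and invoking the inductive hypothesis (pointwise in $x_n$) for $M_{n-1}\cdots M_1$, whereas you apply Bagby directly at each level $k$ after freezing the outer variables---but the substance is identical.
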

Note that this result is true but the proof is not correct in \cite{St}. We give a new proof and a counterexample of the estimate used in \cite{St} in Section \ref{sec iterated}.

Next, we define Morrey spaces.
Let $1 \le q \le p<\infty$.
Define the {\it Morrey norm}
$\|\cdot\|_{{\mathcal M}^p_q(\mathbb{R}^n)}$ by
\begin{equation*}
\| f \|_{{\mathcal M}^p_q(\mathbb{R}^n)}
\equiv
\sup\left\{
|Q|^{\frac{1}{p}-\frac{1}{q}}
\left(\int_Q|f(x)|^q\,{\rm d} x\right)^{\frac1q}
\,:\,\mbox{ $Q$ is a cube in ${\mathbb R}^n$}\right\}
\end{equation*}
for a measurable function $f$.
The {\it Morrey space}
${\mathcal M}^p_q({\mathbb R}^n)$
is the set of all measurable functions $f$
for which
$\| f \|_{{\mathcal M}^p_q(\mathbb{R}^n)}$
is finite.

Based on the above definition, we define mixed Morrey spaces, whose properties and examples will be investigated in Section 
\ref{sec mixed Morrey spaces}.
\begin{definition}[{\it Mixed Morrey spaces}]
Let $\vec{q}=(q_1, \ldots, q_n) \in (0, \infty]^n$ and $p\in (0, \infty]$ satisfy
\[
\sum_{j=1}^n\frac{1}{q_j} \ge \frac{n}{p}.
\]
Then define the {\it mixed Morrey norm}
$\|\cdot\|_{\mathcal{M}^p_{\vec{q}}(\mathbb{R}^n)}$ by
\[
\|f\|_{\mathcal{M}^p_{\vec{q}}(\mathbb{R}^n)}
\equiv
\sup\left\{
|Q|^{\frac{1}{p}-\frac{1}{n}
\left(
\sum_{j=1}^n\frac{1}{q_j}
\right)
}
\|f\chi_Q\|_{\vec{q}}
\,:\,\mbox{ $Q$ is a cube in ${\mathbb R}^n$}\right\}
\]
for $f \in L^0(\mathbb{R}^n)$.
We define the {\it mixed Morrey space} ${\mathcal{M}^p_{\vec{q}}}(\mathbb{R}^n)$ 
to be the set of all $f \in L^0(\mathbb{R}^n)$
with $\|f\|_{\mathcal{M}^p_{\vec{q}}(\mathbb{R}^n)} <\infty$.
\end{definition}

The iterated maximal operator in mixed Morrey spaces is bounded. In fact, the following holds:

\begin{theorem} \label{thm 180121-1}
Let $0<\vec{q}\le\infty$ and $0<p<\infty$ satisfy 
\[
\frac np \le\sum_{j=1}^n\frac1q_j,
\quad
\frac{n-1}{n}p<\max(q_1,\ldots,q_n).
\]
 If $0<t<\min(q_1, \ldots, q_n, p)$, then
\begin{equation*}
\|{\mathcal M}_tf\|_{\mathcal{M}^p_{\vec{q}}(\mathbb{R}^n)} 
\lesssim \|f\|_{\mathcal{M}^p_{\vec{q}}(\mathbb{R}^n)}
\end{equation*}
for all $f \in \mathcal{M}^p_{\vec{q}}(\mathbb{R}^n)$.
\end{theorem}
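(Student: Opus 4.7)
The plan is the standard Morrey decomposition. Fix a cube $Q = Q(x_0, r_0)$, decompose $f = f_1 + f_2$ with $f_1 := f\chi_{3Q}$ and $f_2 := f - f_1$, and invoke the elementary inequality $(a+b)^t \le C_t(a^t + b^t)$, which makes $\mathcal{M}_t$ subadditive up to a multiplicative constant. It therefore suffices to bound the Morrey contributions of $\mathcal{M}_t f_1$ and $\mathcal{M}_t f_2$ on $Q$ separately. For the local part, Theorem \ref{thm 171206-1} applies with the vector $\vec{q}$ in place of $\vec{p}$ (legitimate because $0 < t < \min(q_1,\ldots,q_n)$) and yields $\|\mathcal{M}_t f_1\|_{\vec{q}} \lesssim \|f_1\|_{\vec{q}} = \|f\chi_{3Q}\|_{\vec{q}}$; multiplying by $|Q|^{1/p - (1/n)\sum_{j=1}^n 1/q_j}$, using $|3Q|\sim |Q|$, and invoking the mixed Morrey norm applied at the cube $3Q$ gives $|Q|^{1/p - (1/n)\sum_{j=1}^n 1/q_j} \|(\mathcal{M}_t f_1)\chi_Q\|_{\vec{q}} \lesssim \|f\|_{\mathcal{M}^p_{\vec{q}}}$.

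The essence is the far-part pointwise bound
\[
\mathcal{M}_t f_2(x) \;\lesssim\; |Q|^{-1/p}\|f\|_{\mathcal{M}^p_{\vec{q}}} \qquad (x\in Q),
\]
which, multiplied by $\|\chi_Q\|_{\vec{q}} = |Q|^{(1/n)\sum_{j=1}^n 1/q_j}$ and by the Morrey scaling factor, produces the desired bound on the far piece. To establish it, I would expand $\mathcal{M}_t f_2(x)^t = M_n\cdots M_1[|f_2|^t](x)$ as a nested supremum-integral and decompose each stage dyadically in its coordinate, mimicking the one-dimensional model: for $n=1$, $M[|f|^t\chi_{|z|>3r_0}](x)\lesssim \sum_k(2^k r_0)^{-1}\int_{|z|\le 2^k r_0}|f|^t$, and H\"older with exponent $q/t$ followed by the Morrey bound collapses this series to $r_0^{-t/p}\|f\|^t_{\mathcal{M}^p_q}$. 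In $n$ variables this produces a multi-index sum over dyadic rectangles $R = I_1\times\cdots\times I_n$ with sides $s_j$, and on each such $R$ H\"older with the mixed exponent $\vec{q}/t$ together with the Morrey bound $\|f\chi_R\|_{\vec{q}}\le \sigma^{\sum_{j=1}^n 1/q_j - n/p}\|f\|_{\mathcal{M}^p_{\vec{q}}}$ (applied via the smallest cube of side $\sigma := \max_j s_j$ containing $R$) gives
\[
\frac{1}{|R|}\int_R |f|^t \;\lesssim\; \sigma^{t(\sum_{j=1}^n 1/q_j - n/p)} \prod_{j=1}^n s_j^{-t/q_j}\|f\|^t_{\mathcal{M}^p_{\vec{q}}}.
\]

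The main obstacle is controlling the convergence of this multi-index dyadic sum. Organizing the summation by the index $k^*$ realizing the maximum side, the sums in the remaining coordinates are finite geometric series (with ratios $2^{-t/q_j}<1$), while the outer series in $s_{k^*}$ has ratio $2^{t(\sum_{j\ne k^*}1/q_j - n/p)}$. The hypothesis $\frac{n-1}{n}p < \max_j q_j$ together with $\sum_{j=1}^n 1/q_j \ge n/p$ is precisely what forces this ratio below $1$ once $k^*$ is paired with the direction realizing $q_{k^*} = \max_j q_j$; reconciling this pairing with the iterated (rather than strong-maximal) structure of $\mathcal{M}_t$---so that the ``rectangles of bad orientation'' are absorbed by the ordered supremum-integrals rather than summed independently---is the key technical point. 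Once done, the sum collapses to a constant times $r_0^{-nt/p}\|f\|^t_{\mathcal{M}^p_{\vec{q}}}$, and extracting the $t$-th root completes the proof.
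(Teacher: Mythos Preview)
Your far-part pointwise bound
\[
\mathcal{M}_t f_2(x) \lesssim |Q|^{-1/p}\|f\|_{\mathcal{M}^p_{\vec{q}}} \qquad (x\in Q)
\]
is false in general. Take $n=2$, $t=1$, $Q=[0,1]^2$, $q_1=q_2=2$, $p=3$ (all hypotheses are met), and $f(y_1,y_2)=|y_1|^{-1/3}\chi_{(0,1]}(y_1)\chi_{[N,N+1]}(y_2)$ with $N$ large. Then $f\in\mathcal{M}^3_2(\mathbb{R}^2)$, $f_2=f$ (its support misses $3Q$), and for $x\in Q$ one has $\mathcal{M}_1 f_2(x)\gtrsim x_1^{-1/3}/N$, which blows up on $Q$. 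The failure of your dyadic-rectangle decomposition is precisely this phenomenon: the estimate $M_1 g(x_1,y_2)\lesssim\sum_{k_1\ge 0}(2^{k_1}r_0)^{-1}\int_{2^{k_1}I_1}g(y_1,y_2)\,dy_1$ needs $g(\cdot,y_2)$ to vanish on a neighbourhood of $x_1$ of size $\sim r_0$, and this holds only when $y_2\in 3I_2$; for $y_2\notin 3I_2$ one has $|f_2(\cdot,y_2)|^t=|f(\cdot,y_2)|^t$ with no gap near $x_1$, and small intervals $J_1$ can dominate $M_1$. Since $M_n\cdots M_1$ is \emph{larger} than the strong maximal operator, appealing to its ``ordered'' structure cannot absorb these contributions; they are genuinely present.

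The paper avoids the local/far decomposition entirely. It proves an equivalent weighted description of the mixed Morrey norm,
\[
\|g\|_{\mathcal{M}^p_{\vec{q}}}\ \sim\ \sup_{Q}\,|Q|^{\frac1p-\frac1n\sum_{j}\frac1{q_j}}\bigl\|g\,(\mathcal{M}_1\chi_Q)^\eta\bigr\|_{\vec{q}},
\]
valid for $\sum_j 1/q_j-n/p<\eta<1/\max_j q_j$ (the hypothesis $\tfrac{n-1}{n}p<\max_j q_j$ is exactly what guarantees such an $\eta$ exists), and then shows a weighted mixed-Lebesgue inequality
\[
\Bigl\|(\mathcal{M}_1 f)\cdot\bigotimes_{j=1}^n w_j^{1/q_j}\Bigr\|_{\vec{q}}\ \lesssim\ \Bigl\|f\cdot\bigotimes_{j=1}^n w_j^{1/q_j}\Bigr\|_{\vec{q}}
\]
for tensor weights $w_j\in A_{q_j}(\mathbb{R})$. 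Writing $(\mathcal{M}_1\chi_Q)^\eta=\bigotimes_j (M_j\chi_{I_j})^\eta$ and noting $(M_j\chi_{I_j})^{\eta q_j}\in A_1\subset A_{q_j}$ when $\eta q_j<1$, the Morrey bound follows in one step. The moral is that for the iterated operator the right substitute for ``localize $f$ to a dilate of $Q$'' is ``insert a product $A_1$ weight adapted to $Q$''.
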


As a corollary, we obtain this boundedness of $\mathcal{M}_t$ in classical Morrey spaces.

\begin{corollary}
Let 
\[
0<\frac{n-1}{n}p<q\le p<\infty.
\]
 If $0<t<q$, then
\[
\|{\mathcal M}_tf\|_{\mathcal{M}^p_q(\mathbb{R}^n)} 
\lesssim \|f\|_{\mathcal{M}^p_q(\mathbb{R}^n)}
\]
for all $f \in \mathcal{M}^p_q(\mathbb{R}^n)$.
\end{corollary}

Note that Chiarenza and Frasca showed the boundedness in classical Morrey spaces 
of the Hardy--Littlewood maximal operator \cite{C-F}.
This corollary extends it. 
Furthermore, the following theorem extends Theorem \ref{thm 171206-1}.
The classical case is proved by Feffferman and Stein in 1971 \cite{F-S}.

\begin{theorem}[Dual inequality of Stein type for $L^{\vec{p}}$]\label{thm 171115-1}
Let $f$ be a measurable function on $\mathbb{R}^n$ and $w_j (j=1,\ldots,n)$ be a non-negative measurable function on $\mathbb{R}$.
Then, for $1\le\vec{p}<\infty$,
if $0< t<\min(p_1, \ldots, p_n)$ and $w_j^t\in A_{p_j}$,
\begin{eqnarray*}
\left\|
{\mathcal M}_t f\cdot \bigotimes_{j=1}^n(w_j)^{\frac{1}{p_j}}
\right\|_{\vec{p}}
\lesssim
\left\|
f \cdot \bigotimes_{j=1}^n\left(M_j w_j\right)^{\frac{1}{p_j}}
\right\|_{\vec{p}},
\end{eqnarray*}
where $\displaystyle
\left(\bigotimes_{j=1}^nw_j\right)(x)
=\prod_{j=1}^nw_j(x_j)$.
\end{theorem}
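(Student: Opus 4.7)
The strategy is to iterate the scalar one-dimensional Fefferman--Stein inequality $\int_{\mathbb R}(Mg)^{s}w\,dx\le C\int_{\mathbb R}g^{s}Mw\,dx$ (valid for $s>1$ and $w\ge0$) one coordinate at a time. Because $M_{1},\ldots,M_{n}$ act on pairwise disjoint variables they commute, so $\mathcal{M}_{t}f=(M_{1}M_{2}\cdots M_{n}[|f|^{t}])^{1/t}$. Unrolling the nested integral that defines the mixed norm yields
\[
\|\mathcal{M}_{t}f\cdot W\|_{\vec p}^{p_{n}}=\int w_{n}(x_{n})\left(\cdots\left(\int w_{1}(x_{1})\,(\mathcal{M}_{t}f(x))^{p_{1}}\,dx_{1}\right)^{p_{2}/p_{1}}\cdots\right)^{p_{n}/p_{n-1}}dx_{n},
\]
where $W=\bigotimes_{j=1}^{n}w_{j}^{1/p_{j}}$. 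For the innermost integral, freeze $(x_{2},\ldots,x_{n})$ and write $\mathcal{M}_{t}f=(M_{1}G_{1})^{1/t}$ with $G_{1}=M_{2}\cdots M_{n}[|f|^{t}]$. Scalar Stein with $s=p_{1}/t>1$ and weight $w_{1}$ strips off $M_{1}$ from the iterated maximal and replaces $w_{1}$ by $M_{1}w_{1}$.

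For the $j$-th step ($2\le j\le n$), once $M_{1},\ldots,M_{j-1}$ have been processed, one requires the Banach/mixed-norm-valued Stein inequality
\[
\int w_{j}(x_{j})\,\bigl\|(M_{j}G_{j})(\cdot,x_{j},\cdot)\bigr\|_{B_{j-1}}^{p_{j}/t}\,dx_{j}\le C\int(M_{j}w_{j})(x_{j})\,\bigl\|G_{j}(\cdot,x_{j},\cdot)\bigr\|_{B_{j-1}}^{p_{j}/t}\,dx_{j},
\]
where $G_{j}=M_{j+1}\cdots M_{n}[|f|^{t}]$ and $B_{j-1}$ is the weighted mixed-norm Lebesgue space on $(x_{1},\ldots,x_{j-1})$ with exponents $(p_{1}/t,\ldots,p_{j-1}/t)$ and weights $(M_{1}w_{1},\ldots,M_{j-1}w_{j-1})$. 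Every inner exponent exceeds $1$ (because $t<\min p_{i}$), so $B_{j-1}$ is UMD, and the hypothesis $w_{j}^{t}\in A_{p_{j}}$ enters here: it guarantees the weighted boundedness of $M_{j}$ on $L^{p_{j}}(w_{j}^{t};B_{j-1})$ via a vector-valued Muckenhoupt/Rubio de Francia argument, from which the displayed Stein-type inequality is obtained by running the standard Calder\'on--Zygmund decomposition proof of scalar Stein in the Banach-valued setting.

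Iterating the resulting estimate for $j=1,\ldots,n$ converts every $w_{j}$ into $M_{j}w_{j}$ and removes every $M_{j}$ from $\mathcal{M}_{t}$, producing exactly the right-hand side of the theorem. The main obstacle is the Banach-valued Stein inequality for $j\ge 2$: the scalar case $j=1$ uses no weight hypothesis, but commuting $M_{j}$ past the inner mixed norm on $(x_{1},\ldots,x_{j-1})$ is what forces the $A_{p_{j}}$-hypothesis on $w_{j}^{t}$ and is the technical heart of the argument.
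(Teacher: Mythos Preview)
Your first step contains a genuine error: the one-dimensional maximal operators $M_1,\ldots,M_n$ do \emph{not} commute, even though they act in different variables. The obstruction is that a supremum in one variable does not commute with an integral in another. In fact the paper explicitly computes $M_1M_2\chi_{\Delta}$ and $M_2M_1\chi_{\Delta}$ for the triangle $\Delta=\{0\le y\le x\le 1\}$ and shows that neither dominates the other. Since by definition $\mathcal{M}_t f=(M_n\cdots M_1[|f|^t])^{1/t}$ with $M_1$ \emph{innermost}, you cannot rewrite this as $(M_1G_1)^{1/t}$ with $G_1=M_2\cdots M_n[|f|^t]$, and therefore the scalar Stein inequality in $x_1$ does not strip off $M_1$ at the first step. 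The whole iterative scheme is organized around this reversal of order, so the argument as written does not prove the stated inequality.

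Even if one reorganizes the induction to peel off the outermost operator $M_n$ at the outermost integral in $x_n$ (which is what the paper does), the step you call the ``Banach-valued Stein inequality'' is not available off the shelf: the inequality $\|M_jG\|_{B_{j-1}}\le M_j\|G\|_{B_{j-1}}$, which would reduce it to the scalar case, is false in general (the paper gives a counterexample to exactly this type of estimate). The paper avoids a vector-valued Stein inequality altogether. It proves instead, as a separate lemma, the \emph{weighted boundedness}
\[
\bigl\|\,\|M_n g\|_{(p_1,\ldots,p_{n-1})}\,w_n^{1/p_n}\bigr\|_{(p_n)}
\lesssim
\bigl\|\,\|g\|_{(p_1,\ldots,p_{n-1})}\,w_n^{1/p_n}\bigr\|_{(p_n)}
\qquad (w_n\in A_{p_n}),
\]
via a discretization to step functions (so that the inner mixed norm becomes a sequence norm), a weighted $\ell^{(p_1,\ldots,p_{n-1})}$-valued maximal inequality, and extrapolation. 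This is exactly where the hypothesis $w_j^t\in A_{p_j}$ is used. The passage from $w_n$ to $M_nw_n$ on the right is then the trivial pointwise bound $w_n\le M_nw_n$, and the remaining operators $M_{n-1}\cdots M_1$ are handled by the induction hypothesis on $\mathbb{R}^{n-1}$.
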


We can also extend the Feffferman--Stein vector-valued maximal inequality for mixed spaces.

\begin{theorem}
\label{thm 171123-2}
Let $0<\vec{p}<\infty$, $0<u\le\infty$ and $0<t<\min(p_1, \ldots, p_n, u)$. Then,
for every sequence $\{f_j\}_{j=1}^\infty \subset L^0(\mathbb{R}^n)$,
\begin{equation*}
\left\|\left(
\sum_{j=1}^{\infty}
[\mathcal{M}_tf_j]{}^u
\right)^{\frac{1}{u}}\right\|_{\vec{p}}
\lesssim
\left\|\left(
\sum_{j=1}^{\infty}
|f_j|^u
\right)^{\frac{1}{u}}\right\|_{\vec{p}}.
\end{equation*}

\end{theorem}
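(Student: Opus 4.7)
My plan starts with the substitution $g_j=|f_j|^t$. Since $\|F\|_{L^{\vec p}}^{t}=\|F^{t}\|_{L^{\vec p/t}}$ for $F\ge 0$ by the very definition of the mixed norm, raising both sides of the claimed inequality to the $t$-th power converts it to
\[
\left\|\Bigl(\sum_{j=1}^\infty(M_n\cdots M_1 g_j)^V\Bigr)^{1/V}\right\|_{L^{\vec R}}\lesssim\left\|\Bigl(\sum_{j=1}^\infty g_j^V\Bigr)^{1/V}\right\|_{L^{\vec R}},
\]
where $V=u/t$ and $\vec R=\vec p/t$. The hypothesis $t<\min(p_1,\ldots,p_n,u)$ yields $V>1$ and $R_k>1$ for every $k$, putting the reduced problem in the standard Fefferman--Stein range.

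\textbf{Iterating $M_k$.} I would then prove, for each $k\in\{1,\ldots,n\}$, the one-variable vector-valued maximal inequality
\[
\left\|\Bigl(\sum_{j=1}^\infty(M_k H_j)^V\Bigr)^{1/V}\right\|_{L^{\vec R}}\lesssim\left\|\Bigl(\sum_{j=1}^\infty H_j^V\Bigr)^{1/V}\right\|_{L^{\vec R}},
\]
and chain the $n$ resulting estimates along the composition $g_j\mapsto M_1 g_j\mapsto M_2 M_1 g_j\mapsto\cdots\mapsto M_n\cdots M_1 g_j$.

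\textbf{The one-variable step.} For each $k$, the classical $1$-dimensional Fefferman--Stein inequality of \cite{F-S}, applied in the variable $x_k$ with the remaining coordinates frozen, supplies the pointwise-in-other-variables bound
\[
\Bigl\|\bigl(\textstyle\sum_j(M_k H_j)^V\bigr)^{1/V}\Bigr\|_{L^{r_k}_{x_k}}\lesssim\Bigl\|\bigl(\textstyle\sum_j H_j^V\bigr)^{1/V}\Bigr\|_{L^{r_k}_{x_k}}.
\]
I then propose to lift this estimate to the full mixed norm by replaying the Minkowski--Fubini scheme that underlies the proof of Theorem \ref{thm 171206-1}, but with the scalar function $(\sum_j H_j^V)^{1/V}$ playing the role of $|f|$. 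Where the scalar scheme calls on a $1$-dimensional weighted bound, I would invoke Theorem \ref{thm 171115-1}: its tensor-product weighted estimate provides exactly the $A_{p_k}$-type ingredient needed for a Rubio de Francia style extrapolation to $\ell^V$-valued functions.

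\textbf{Main obstacle.} The principal difficulty is that the iterated definition of $L^{\vec R}$ fixes $L^{r_k}_{x_k}$ at an intermediate slot, so one cannot simply Minkowski-swap the $1$-dimensional vector-valued estimate past the remaining $L^{r_j}_{x_j}$ factors for $j\ne k$; in general the exponents are not monotone enough to close a single such swap. Routing the argument through the weighted Theorem \ref{thm 171115-1} sidesteps this obstruction, because Rubio de Francia extrapolation from its $A_{p_k}$-type bound produces the full $\ell^V$-valued mixed-norm estimate uniformly in the weight class. Composing the resulting $n$ single-variable bounds then yields the theorem.
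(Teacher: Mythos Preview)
Your reduction to $t=1$ is correct and the paper does the same. The iteration idea---peel off one $M_k$ at a time---is also how the paper proceeds, via induction on $n$. The gap is in your ``one-variable step,'' where the proposed resolution does not close.

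First, ``replaying the Minkowski--Fubini scheme with $(\sum_j H_j^V)^{1/V}$ playing the role of $|f|$'' cannot work as stated: that scheme would bound $M_k\big[(\sum_j H_j^V)^{1/V}\big]$, which is not $(\sum_j (M_k H_j)^V)^{1/V}$. Second, Theorem~\ref{thm 171115-1} only supplies a \emph{tensor-product} weighted estimate, whereas a duality argument on $L^{\vec R/V}$ produces a general (non-tensor) dual function $g$; even if you switch to the general-weight Corollary~\ref{cor 171125-3} and run the Fefferman--Stein duality, you end up needing $M_1\cdots M_n$ bounded on $L^{(\vec R/V)'}$, which forces $R_k>V$ for every $k$, i.e.\ $p_k>u$. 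The theorem has no such restriction. Third, the extrapolation you invoke (Proposition~\ref{thm:161125-1}) is only stated for classical $L^p(w)$; a mixed-norm Rubio de Francia machine is not developed in the paper, and your sketch does not supply it.

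What the paper actually does for the key step---the $L^{\vec p}(\ell^u)$-boundedness of $M_n$---is a \emph{discretization} of the inner variables: first treat functions of the form $f_k(x)=\sum_{m'\in\Z^{n-1}}\chi_{m'+[0,1]^{n-1}}(rx')f_{k,m'}(x_n)$, for which the inner $L^{(p_1,\ldots,p_{n-1})}$ norm becomes an $\ell^{(p_1,\ldots,p_{n-1})}$ sequence norm. Then the problem collapses to the one-dimensional bound of $M_n$ on $L^{p_n}\big(\ell^{(u,p_1,\ldots,p_{n-1})}\big)$, which is exactly Bagby's Lemma~\ref{lem 180114-2}. General $f_k$ are handled by approximation plus Fatou and the Lebesgue differentiation theorem. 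The remaining operators $M_{n-1}\cdots M_1$ are then dispatched by the inductive hypothesis with $x_n$ frozen. This is the missing ingredient your sketch does not provide.
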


\begin{theorem} \label{thm 171219-3}
Let $1<\vec{q}<\infty$, $1<u\le\infty$, and $1<p\le\infty$ satisfy $\frac np \le\sum_{j=1}^n\frac1q_j$.
Then, for every sequence $\{f_j\}_{j=1}^{\infty} \subset L^0(\mathbb{R}^n)$,
\begin{equation*}
\left\|\left(
\sum_{j=1}^{\infty}
[Mf_j]{}^u
\right)^{\frac{1}{u}}\right\|_{\mathcal{M}^p_{\vec{q}}(\mathbb{R}^n)}
\lesssim
\left\|\left(
\sum_{j=1}^{\infty}
|f_j|^u
\right)^{\frac{1}{u}}\right\|_{\mathcal{M}^p_{\vec{q}}(\mathbb{R}^n)}.
\end{equation*}
\end{theorem}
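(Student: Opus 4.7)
The plan is to carry out the classical Chiarenza--Frasca local/nonlocal splitting in the mixed setting, using Theorem~\ref{thm 171123-2} in place of the scalar Fefferman--Stein inequality for the local part and a direct dyadic averaging estimate for the nonlocal part. Fix a cube $Q_0\subset\mathbb{R}^n$ and set $F:=(\sum_j |f_j|^u)^{1/u}$; by the definition of the Morrey norm it suffices to show, uniformly in $Q_0$,
\begin{equation*}
|Q_0|^{\frac{1}{p}-\frac{1}{n}\sum_{j=1}^{n}\frac{1}{q_j}}\left\|\left(\sum_{j}[Mf_j]^u\right)^{\frac{1}{u}}\chi_{Q_0}\right\|_{\vec{q}}\lesssim \|F\|_{\mathcal{M}^p_{\vec{q}}(\mathbb{R}^n)}.
\end{equation*}
Decompose $f_j=f_j^{(1)}+f_j^{(2)}$ with $f_j^{(1)}:=f_j\chi_{2Q_0}$, and treat the two pieces separately.

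For the local part I would invoke the pointwise bound $Mf\lesssim \mathcal{M}_1 f$, which is immediate from Fubini: for any cube $Q\ni x$, $|Q|^{-1}\int_Q|f|\le M_n\cdots M_1 f(x)$. Since the hypotheses give $1<\min(q_1,\ldots,q_n,u)$, Theorem~\ref{thm 171123-2} with $t=1$ yields the mixed Lebesgue vector-valued inequality for $M$, and one concludes
\begin{equation*}
\left\|\left(\sum_{j}[Mf_j^{(1)}]^u\right)^{\frac{1}{u}}\chi_{Q_0}\right\|_{\vec{q}}\lesssim \|F\chi_{2Q_0}\|_{\vec{q}}\lesssim |Q_0|^{\frac{1}{n}\sum_j\frac{1}{q_j}-\frac{1}{p}}\|F\|_{\mathcal{M}^p_{\vec{q}}},
\end{equation*}
the last step being the Morrey defining inequality applied to $2Q_0$.

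For the nonlocal part, any cube $R\ni x\in Q_0$ meeting $\mathbb{R}^n\setminus 2Q_0$ must have $\ell(R)\gtrsim\ell(Q_0)$ and lie in some dyadic dilate $Q_k:=2^{k+1}Q_0$ with $k\ge 1$ and $|R|\sim|Q_k|$, giving $Mf_j^{(2)}(x)\lesssim \sup_{k\ge 1}|Q_k|^{-1}\int_{Q_k}|f_j|$. Continuous Minkowski in $\ell^u$ bounds $(\sum_j[|Q_k|^{-1}\int_{Q_k}|f_j|]^u)^{1/u}$ by $|Q_k|^{-1}\int_{Q_k}F$, and a mixed H\"older inequality together with $\|\chi_{Q_k}\|_{\vec{q'}}=|Q_k|^{1-\frac{1}{n}\sum_j 1/q_j}$ and the Morrey defining inequality reduce this to $|Q_k|^{-1/p}\|F\|_{\mathcal{M}^p_{\vec{q}}}$. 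Bounding $\sup_k\le\sum_k$ and applying Minkowski in $\ell^u(j)$ across the shells then yields, for $x\in Q_0$,
\begin{equation*}
\left(\sum_{j}[Mf_j^{(2)}(x)]^u\right)^{\frac{1}{u}}\lesssim \sum_{k\ge 1}|Q_k|^{-1/p}\|F\|_{\mathcal{M}^p_{\vec{q}}}\lesssim |Q_0|^{-1/p}\|F\|_{\mathcal{M}^p_{\vec{q}}},
\end{equation*}
where the geometric series converges because $p<\infty$. Taking the $\vec{q}$-norm over $Q_0$ together with $\|\chi_{Q_0}\|_{\vec{q}}=|Q_0|^{\frac{1}{n}\sum_j 1/q_j}$ closes the argument. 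The main obstacle is the exponent bookkeeping: one must verify that the mixed H\"older exponent $1-\frac{1}{n}\sum_j 1/q_j$ on $\|\chi_{Q_k}\|_{\vec{q'}}$ cancels the Morrey normalization exactly so that only the decaying factor $|Q_k|^{-1/p}$ survives; the endpoint $p=\infty$ permitted by the hypothesis would require a separate argument since the geometric series would diverge.
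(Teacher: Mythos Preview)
Your proof is correct and follows the same local/nonlocal splitting as the paper, using Theorem~\ref{thm 171123-2} for the local piece and a dyadic averaging estimate for the nonlocal piece. The nonlocal arguments differ in detail: the paper partitions each dilate $2^{\ell}Q$ into subcubes $Q^{(k)}$ of size $|Q|$ and then appeals to the embedding $\mathcal{M}^p_{\vec{q}}\hookrightarrow\mathcal{M}^p_1$, whereas you apply mixed H\"older directly on the dilates $Q_k$. Your device of bounding $\sup_k$ by $\sum_k$ before invoking the $\ell^u$-triangle inequality is actually the rigorous way to pass the vector norm through; the paper's displayed step
\[
\Bigl(\sum_j\bigl[\sup_{\ell}\max_k a_{j,\ell,k}\bigr]^u\Bigr)^{1/u}\lesssim\sup_{\ell}\max_k\Bigl(\sum_j a_{j,\ell,k}^{\,u}\Bigr)^{1/u}
\]
is the wrong direction of the $\ell^u(\ell^\infty)$--$\ell^\infty(\ell^u)$ embedding and needs exactly your fix, at the cost of the geometric series and hence $p<\infty$.

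Your caution about $p=\infty$ is well founded. Since $\mathcal{M}^\infty_{\vec{q}}=L^\infty$ (let $Q$ shrink to a Lebesgue point for one inequality; use $\|f\chi_Q\|_{\vec q}\le\|f\|_\infty\|\chi_Q\|_{\vec q}$ for the other), the conclusion at $p=\infty$ with $u<\infty$ would assert $\bigl\|(\sum_j(Mf_j)^u)^{1/u}\bigr\|_\infty\lesssim\bigl\|(\sum_j|f_j|^u)^{1/u}\bigr\|_\infty$, which fails: on $\mathbb{R}$ take $f_j=\chi_{[2^j,2^{j+1}]}$, so $F=\chi_{[1,\infty)}$ has $\|F\|_\infty=1$, yet $Mf_j(0)\ge\tfrac12$ for every $j\ge0$. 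Thus your argument covers precisely the valid range $1<p<\infty$.
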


\begin{theorem} 
Let $0<\vec{q}\le\infty$ and $0<p<\infty$ satisfy 
\[
\frac np \le\sum_{j=1}^n\frac1q_j,
\quad
\frac{n-1}{n}p<\max(q_1,\ldots,q_n).
\]
 If $0<t<\min(q_1, \ldots, q_n, u)$, then
\begin{equation*}
\left\|\left(\sum_{j=1}^\infty
[{\mathcal M}_tf_j]^u\right)^\frac1u
\right\|_{\mathcal{M}^p_{\vec{q}}(\mathbb{R}^n)} 
\lesssim \left\|\left(\sum_{k=1}^\infty|f_j|^u\right)^\frac1u\right\|_{\mathcal{M}^p_{\vec{q}}(\mathbb{R}^n)}
\end{equation*}
for $\{f_j\}_{j=1}^\infty \subset \mathcal{M}^p_{\vec{q}}(\mathbb{R}^n)$.
\end{theorem}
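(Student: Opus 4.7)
The proof follows the same local--global decomposition as the scalar statement Theorem~\ref{thm 180121-1}, with the underlying scalar boundedness replaced by the vector-valued Fefferman--Stein inequality of Theorem~\ref{thm 171123-2}. Fix a cube $Q\subset\mathbb{R}^n$ and set $\alpha=\tfrac{1}{p}-\tfrac{1}{n}\sum_{j=1}^n\tfrac{1}{q_j}$; decompose each $f_j=g_j+h_j$ with $g_j=f_j\chi_{3Q}$ and $h_j=f_j\chi_{\mathbb{R}^n\setminus 3Q}$. Because $M_n\cdots M_1$ is sub-additive and $(a+b)^{1/t}\lesssim a^{1/t}+b^{1/t}$, one gets the pointwise bound $\mathcal{M}_tf_j\lesssim \mathcal{M}_tg_j+\mathcal{M}_th_j$, so the $\ell^u$ square function likewise splits into a local and a non-local contribution.

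For the local part I apply Theorem~\ref{thm 171123-2} with exponent vector $\vec q$ to the family $\{g_j\}$ (the case $q_j=\infty$ being handled by the usual truncation):
\[
\Bigl\|\Bigl(\sum_{j=1}^\infty[\mathcal{M}_tg_j]^u\Bigr)^{\frac{1}{u}}\Bigr\|_{\vec{q}}
\lesssim
\Bigl\|\Bigl(\sum_{j=1}^\infty|f_j|^u\Bigr)^{\frac{1}{u}}\chi_{3Q}\Bigr\|_{\vec{q}}.
\]
Multiplying by $|Q|^{\alpha}\sim|3Q|^{\alpha}$ and taking the supremum over $Q$ controls this piece by $\bigl\|(\sum_j|f_j|^u)^{1/u}\bigr\|_{\mathcal{M}^p_{\vec q}}$, as required.

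For the non-local part I would recycle the pointwise estimate that already drives the proof of Theorem~\ref{thm 180121-1}. Decomposing $h_j$ along the dyadic annuli $3^{k+1}Q\setminus 3^kQ$ ($k\ge1$), one obtains for $x\in Q$ an estimate of the schematic form
\[
\mathcal{M}_th_j(x)\lesssim\sum_{k\ge1}c_k\cdot A_k(|f_j|)(x),
\]
where each $A_k$ is a positive averaging operator on a dilate of $Q$ and the coefficients $c_k$ form a geometric series whose convergence consumes exactly the hypothesis $\tfrac{n-1}{n}p<\max_j q_j$. Since each $A_k$ is linear and positive, Minkowski's inequality (for $u\ge1$; for $u<1$ the elementary embedding $\ell^u\hookrightarrow\ell^1$ together with subadditivity of $(\cdot)^{u}$ does the job) transfers the bound to the $\ell^u$ square function of the family. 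Inserting the resulting estimate into $|Q|^\alpha\|\cdot\chi_Q\|_{\vec q}$ and summing the geometric series in $k$ produces the bound by $\bigl\|(\sum_j|f_j|^u)^{1/u}\bigr\|_{\mathcal{M}^p_{\vec q}}$.

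The main obstacle will be to present the non-local pointwise estimate in a form that is genuinely \emph{linear} and positive in $|f_j|$, rather than being stated, as one might naively, in terms of the scalar Morrey norm $\|f_j\|_{\mathcal{M}^p_{\vec q}}$. Only a linear formulation permits the $\ell^u$-summation to commute with the estimate; as soon as the kernel underlying the scalar argument of Theorem~\ref{thm 180121-1} is exposed, the square-function extension proceeds essentially automatically.
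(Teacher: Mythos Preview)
Your plan rests on a misreading of the scalar argument. The paper's proof of Theorem~\ref{thm 180121-1} does \emph{not} use a local--global decomposition with dyadic annuli; that device is used only for the ordinary Hardy--Littlewood maximal operator $M$ (Theorem~\ref{thm 180122-1}), where Proposition~\ref{ex 171103-1} gives the clean off-cube bound. For the iterated operator $\mathcal{M}_t=M_n\cdots M_1[|\cdot|^t]^{1/t}$ the paper proceeds instead through the weighted characterization of the mixed Morrey norm (Proposition~\ref{prop 180119-5}) together with a weighted mixed-Lebesgue estimate (Proposition~\ref{prop 180119-6}); the vector-valued theorem is proved the same way, with Proposition~\ref{prop 180119-6} replaced by its $\ell^u$ analogue Proposition~\ref{prop 180129-5}. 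The hypothesis $\tfrac{n-1}{n}p<\max_j q_j$ enters precisely here: it guarantees an $\eta$ with $\sum_j 1/q_j-n/p<\eta<1/\max_j q_j$, so that each $(M_j\chi_{I_j})^{\eta q_j}\in A_1$.

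The annular pointwise estimate you sketch for the non-local part is the genuine gap. The iterated maximal operator has product-type, not radial, off-diagonal behavior: if $h$ is supported on $3^{k+1}Q\setminus 3^kQ$ and $x\in Q$, the value $M_n\cdots M_1 h(x)$ is not controlled by a single average of $|h|$ over a dilate of $Q$, because one coordinate of a point in the annulus may lie inside $3I_j$ while another lies far away, and the successive one-dimensional maximal functions mix these directions. No linear positive averaging operator $A_k$ with geometrically decaying coefficients $c_k$ falls out of this, and the hypothesis $\tfrac{n-1}{n}p<\max_j q_j$ does not arise from summing such a series. Your local piece via Theorem~\ref{thm 171123-2} is fine, but to finish you need to abandon the annular decomposition and instead run the weighted argument: prove (or invoke) the $\ell^u$ version of Proposition~\ref{prop 180119-6} and combine it with Proposition~\ref{prop 180119-5}.
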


\begin{corollary}
Let 
\[
0<\frac{n-1}{n}p<q\le p<\infty.
\]
 If $0<t<\min(q, u)$, then
\[
\left\|\left(\sum_{j=1}^\infty
[{\mathcal M}_tf_j]^u\right)^\frac1u
\right\|_{\mathcal{M}^p_q(\mathbb{R}^n)} 
\lesssim 
\left\|
\left(\sum_{j=1}^\infty
|f_j|^u\right)^\frac1u
\right\|_{\mathcal{M}^p_q(\mathbb{R}^n)}
\]
for $\{f_j\}_{j=1}^\infty \subset \mathcal{M}^p_q(\mathbb{R}^n)$.

\end{corollary}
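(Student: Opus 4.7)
The plan is to derive this corollary as the constant-exponent specialization of the preceding mixed-Morrey vector-valued inequality. First I would observe that when the tuple $\vec{q}$ is the constant tuple $(q,q,\ldots,q)$, an iterated application of Fubini's theorem collapses the mixed Lebesgue norm $\|\cdot\|_{\vec{q}}$ to the ordinary $\|\cdot\|_q$. Combined with the identity
\[
|Q|^{\frac1p-\frac1n\sum_{j=1}^n\frac1{q_j}}=|Q|^{\frac1p-\frac1q},
\]
this shows that $\mathcal{M}^p_{(q,\ldots,q)}(\mathbb{R}^n)=\mathcal{M}^p_q(\mathbb{R}^n)$ with equal norms, so the corollary statement is literally the preceding theorem evaluated at $\vec{q}=(q,\ldots,q)$.

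Next I would verify that the hypotheses of the corollary are precisely the hypotheses of the preceding theorem in this specialization. The summability condition $\tfrac{n}{p}\le\sum_{j=1}^{n}\tfrac{1}{q_j}$ becomes $\tfrac{n}{p}\le\tfrac{n}{q}$, i.e.\ $q\le p$. The geometric condition $\tfrac{n-1}{n}p<\max(q_1,\ldots,q_n)$ becomes $\tfrac{n-1}{n}p<q$, which is assumed. Finally $\min(q_1,\ldots,q_n,u)=\min(q,u)$, so the range of $t$ matches. With these verifications the preceding theorem applied to the sequence $\{f_j\}\subset\mathcal{M}^p_q(\mathbb{R}^n)=\mathcal{M}^p_{(q,\ldots,q)}(\mathbb{R}^n)$ yields exactly the desired inequality.

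There is essentially no technical obstacle here; the corollary is a direct reduction. The only point requiring any care is the identification of the two norms, which is a routine consequence of Fubini and of the arithmetic $\tfrac{1}{n}\sum_{j=1}^{n}\tfrac{1}{q_j}=\tfrac{1}{q}$ when all $q_j$ coincide. I would state these two identifications explicitly and then write a single sentence invoking the preceding theorem to complete the argument.
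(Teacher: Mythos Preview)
Your proposal is correct and matches the paper's own proof, which consists of the single sentence ``In Theorem \ref{thm 180129-1}, letting $q_j=q$, we conclude the result.'' Your additional verification that the hypotheses specialize correctly (in particular $\tfrac{n}{p}\le\sum\tfrac{1}{q_j}\Leftrightarrow q\le p$ and $\tfrac{n-1}{n}p<\max q_j\Leftrightarrow\tfrac{n-1}{n}p<q$) and that the norms coincide is exactly the content implicit in that one-line reduction.
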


Furthermore, we investigate the boundedness of the fractional integral operator $I_{\alpha}$.
Its boundedness in classical Morrey spaces is proved by Adams  \cite{Adams}.
Let $0<\alpha<n$. Define the fractional integral operator $I_{\alpha}$ of order $\alpha$  by
\[
I_{\alpha}f(x)
\equiv
\int_{\mathbb{R}^n}\frac{f(y)}{|x-y|^{n-\alpha}}{\rm d}y
\]
for $f\in L^1_{\rm loc}(\mathbb{R}^n)$
as long as the right-hand side makes sense.

\begin{theorem} \label{thm 171219-1}
Let $0<\alpha<n, 1<\vec{q}, \vec{s}<\infty$ 
and $0<p, r<\infty$.
Assume that 
$\frac np \le\sum_{j=1}^n\frac1{q_j}$, and $\frac nr \le\sum_{j=1}^n\frac1{s_j}$.
Also, assume that
\[
\frac{1}{r}=\frac{1}{p}-\frac{\alpha}{n}, \quad \frac{\vec{q}}{p}=\frac{\vec{s}}{r}.
\]
Then, for $f \in \mathcal{M}^p_{\vec{q}}(\mathbb{R}^n)$,
\[
\|I_{\alpha}f\|_{\mathcal{M}^r_{\vec{s}}(\mathbb{R}^n)} \lesssim \|f\|_{\mathcal{M}^p_{\vec{q}}(\mathbb{R}^n)}. 
\]
\end{theorem}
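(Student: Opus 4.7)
The plan is to adapt Hedberg's proof of Adams' classical Morrey-space theorem to the mixed-norm setting. For $R>0$ and $f\ge 0$, split
\[
I_\alpha f(x)=\int_{|x-y|\le R}\frac{f(y)}{|x-y|^{n-\alpha}}\,{\rm d}y+\int_{|x-y|>R}\frac{f(y)}{|x-y|^{n-\alpha}}\,{\rm d}y=:A_R(x)+B_R(x).
\]
A standard dyadic decomposition of $A_R$ into the shells $\{2^{-k-1}R<|x-y|\le 2^{-k}R\}$ yields the familiar pointwise bound $A_R(x)\lesssim R^\alpha Mf(x)$, where $M$ denotes the Hardy--Littlewood maximal operator. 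This step is insensitive to the mixed structure.

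The heart of the proof is the far-field bound on $B_R(x)$. Decomposing into dyadic annuli and enclosing each in a cube gives
\[
B_R(x)\lesssim \sum_{k=0}^\infty (2^kR)^{\alpha-n}\int_{Q(x,2^{k+1}R)}f(y)\,{\rm d}y.
\]
To control the inner integral I invoke H\"older's inequality in the mixed norm: for any cube $Q$ of side length $\ell$,
\[
\int_Q f(y)\,{\rm d}y\le \|f\chi_Q\|_{\vec q}\,\|\chi_Q\|_{\vec{q'}}=\|f\chi_Q\|_{\vec q}\cdot |Q|^{1-\frac1n\sum_{j=1}^n 1/q_j},
\]
since the elementary computation $\|\chi_Q\|_{\vec{q'}}=\prod_j \ell^{1/q'_j}=|Q|^{1-\frac1n\sum 1/q_j}$ holds. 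Inserting the definition of the Morrey norm makes $\vec q$ cancel out, leaving
\[
\int_Q f(y)\,{\rm d}y\lesssim |Q|^{1-1/p}\|f\|_{\mathcal{M}^p_{\vec q}}.
\]
The hypothesis $1/r=1/p-\alpha/n>0$ forces $\alpha<n/p$, so the resulting geometric series converges and gives $B_R(x)\lesssim R^{\alpha-n/p}\|f\|_{\mathcal{M}^p_{\vec q}}$. Optimizing in $R$ produces the pointwise Hedberg-type inequality
\[
I_\alpha f(x)\lesssim (Mf(x))^{p/r}\|f\|_{\mathcal{M}^p_{\vec q}}^{1-p/r}.
\]

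It remains to take the $\mathcal{M}^r_{\vec s}$ norm of both sides. The hypothesis $\vec s/r=\vec q/p$ gives $s_j=(r/p)q_j$, so $(p/r)\vec s=\vec q$ and $(p/r)\sum 1/q_j=\sum 1/s_j$; a short computation then yields the rescaling identity
\[
\|(Mf)^{p/r}\|_{\mathcal{M}^r_{\vec s}}=\|Mf\|_{\mathcal{M}^p_{\vec q}}^{p/r}.
\]
Combined with the boundedness of $M$ on $\mathcal{M}^p_{\vec q}$ (which follows from Theorem \ref{thm 171219-3} applied to a single function, noting that the hypotheses $1<\vec q<\infty$ and $n/p\le\sum 1/q_j$ force $p>1$), this finishes the proof. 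The main obstacle is the bookkeeping of the two rescaling identities in the mixed setting --- the collapse of the H\"older estimate on $Q$ to an exponent depending only on $p$, and the Morrey-power identity above. Once these are verified, the remainder of the argument is parallel to Adams' original proof.
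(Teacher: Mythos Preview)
Your proof is correct and follows essentially the same route as the paper: a Hedberg--Adams pointwise split into near and far parts, the far-field bound via the mixed Morrey norm (the paper phrases your H\"older estimate $\int_Q f\lesssim |Q|^{1-1/p}\|f\|_{\mathcal{M}^p_{\vec q}}$ as the embedding $\mathcal{M}^p_{\vec q}\hookrightarrow\mathcal{M}^p_1$, which is proved the same way), then optimization in $R$, the rescaling identity $\|(Mf)^{p/r}\|_{\mathcal{M}^r_{\vec s}}=\|Mf\|_{\mathcal{M}^p_{\vec q}}^{p/r}$, and finally the $\mathcal{M}^p_{\vec q}$-boundedness of $M$. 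The only cosmetic difference is that the paper fixes $R$ so that the near integral equals $I_\alpha f(x)/2$ and then optimizes $\min(t^\alpha Mf, t^{-n/r}\|f\|)$ over all $t$, whereas you leave $R$ free and minimize the sum; the resulting pointwise inequality is identical.
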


Finally, we show that the singular integral operators are bounded in mixed Morrey spaces.
Their boundedness in classical Morrey spaces is proved by Chiarenza and Frasca \cite{C-F}.
Let $T$ be a singular integral operator with a kernel $k(x,y)$ which satisfies the following conditions:
\begin{itemize}
\item[(1)]
There exists a conctant $C>0$ such that
$|k(x,y)|\le\frac{C}{|x-y|^n}$.

\item[(2)]
There exists $\epsilon>0$ and $C>0$ such that 
\[
|k(x,y)-k(z,y)|+|k(y,x)-k(y,z)|\le C\frac{|x-z|^\epsilon}{|x-y|^{n+\epsilon}},
\]
if $|x-y|\ge2|x-z|$ with $x \neq y$.

\item[(3)]
If $f \in L_c^\infty(\mathbb{R}^n)$, the set of all compactly supported $L^\infty$-functions, then
\[
Tf(x)=\int_{\mathbb{R}^n}k(x,y)f(y){\rm d}y  \quad  (x \notin {\rm supp}(f)).
\]

\end{itemize}

Keeping in mind that $T$ extends to a bounded linear operator on $\mathcal{M}^p_{q}(\mathbb{R}^n)$,
we prove the following theorem.

\begin{theorem} \label{thm 180514-1}
Let $1<\vec{q}<\infty$ and $1<p<\infty$ satysfy 
\[
\frac np \le\sum_{j=1}^n\frac1{q_j}.
\]
Then,
\begin{equation*}
\|Tf\|_{\mathcal{M}_{\vec{q}}^p(\mathbb{R}^n)} \lesssim \|f\|_{\mathcal{M}_{\vec{q}}^p(\mathbb{R}^n)} 
\end{equation*}
for $f \in \mathcal{M}_{\vec{q}}^p(\mathbb{R}^n)$.
\end{theorem}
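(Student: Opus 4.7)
The plan is to mimic the classical Chiarenza--Frasca decomposition, with the key inputs being boundedness of $T$ on $L^{\vec{q}}(\mathbb{R}^n)$ and H\"older's inequality in the mixed norm. Fix a cube $Q$ and decompose $f=f_1+f_2$ with $f_1=f\chi_{2Q}$ and $f_2=f\chi_{\mathbb{R}^n\setminus 2Q}$.

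For the local part, I would invoke the boundedness of $T$ on $L^{\vec{q}}(\mathbb{R}^n)$ under the assumption $1<\vec{q}<\infty$ (which follows from the standard Calder\'on--Zygmund theory applied iteratively in each coordinate, using hypotheses (1)--(3); this is a preliminary fact the paper would cite or establish separately). This gives
\[
\|Tf_1\chi_Q\|_{\vec{q}}\le\|Tf_1\|_{\vec{q}}\lesssim\|f\chi_{2Q}\|_{\vec{q}},
\]
and multiplying by $|Q|^{\frac1p-\frac1n\sum_j\frac1{q_j}}$, the right-hand side is controlled by a constant times $\|f\|_{\mathcal{M}_{\vec{q}}^p}$ after noting $|Q|\sim|2Q|$.

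For the far part, I would estimate $Tf_2(x)$ pointwise for $x\in Q$ using only the size condition (1):
\[
|Tf_2(x)|\lesssim\sum_{k=0}^{\infty}\frac{1}{|2^{k+1}Q|}\int_{2^{k+1}Q}|f(y)|\,{\rm d}y.
\]
Each average is then handled by H\"older's inequality in the mixed norm, which gives
\[
\int_{2^{k+1}Q}|f(y)|\,{\rm d}y\le\|f\chi_{2^{k+1}Q}\|_{\vec{q}}\,\|\chi_{2^{k+1}Q}\|_{\vec{q}'}.
\]
Since $\|\chi_{R}\|_{\vec{r}}=|R|^{\frac1n\sum_j\frac1{r_j}}$ for a cube $R$, the second factor equals $|2^{k+1}Q|^{1-\frac1n\sum_j\frac1{q_j}}$, and the first factor is bounded by $|2^{k+1}Q|^{\frac1n\sum_j\frac1{q_j}-\frac1p}\|f\|_{\mathcal{M}_{\vec{q}}^p}$. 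Summing in $k$ yields a geometric series with ratio $2^{-n/p}<1$, and
\[
|Tf_2(x)|\lesssim|Q|^{-1/p}\|f\|_{\mathcal{M}_{\vec{q}}^p}\qquad(x\in Q).
\]
Multiplying by $\chi_Q$, taking the mixed $\vec{q}$-norm, using $\|\chi_Q\|_{\vec{q}}=|Q|^{\frac1n\sum_j\frac1{q_j}}$ and the Morrey rescaling factor closes this estimate as well. Taking the supremum over cubes $Q$ gives the theorem.

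The main technical obstacle is the mixed-norm $L^{\vec{q}}$ boundedness of $T$, which is nontrivial because $T$ is not of tensor-product type; however, it is a standard application of Calder\'on--Zygmund theory combined with the Fefferman--Stein vector-valued inequality (Theorem~\ref{thm 171123-2}) applied coordinatewise, and is presumably isolated as a preliminary in the paper. The rest of the argument is routine once H\"older's inequality in the mixed norm and the exponent identity $\frac1n\sum_j\frac1{q_j'}=1-\frac1n\sum_j\frac1{q_j}$ are in hand. Note that the hypothesis $\frac np\le\sum_j\frac1{q_j}$ is exactly what makes the geometric series converge and what makes the scaling exponent $\frac1p-\frac1n\sum_j\frac1{q_j}$ nonpositive, so no lower bound on $p$ beyond $p>1$ is needed.
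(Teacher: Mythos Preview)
Your proposal is correct and follows essentially the same Chiarenza--Frasca decomposition as the paper: split $f=f\chi_{2Q}+f\chi_{(2Q)^c}$, handle the local part by the $L^{\vec{q}}$-boundedness of $T$, and bound the far part pointwise via the kernel size condition and mixed H\"older, leading to a convergent series (the paper writes this as an integral $\int_{2r}^{\infty}\ell^{-n/p-1}\,{\rm d}\ell$ rather than a dyadic sum, but the computation is the same). One remark: the paper does isolate the $L^{\vec{q}}$-boundedness of $T$ as a separate result (Theorem~\ref{thm 180604-1}), but its proof is not the coordinatewise iteration you suggest---which would not work for a general Calder\'on--Zygmund operator on $\mathbb{R}^n$---but rather a duality argument writing $\|Tf\|_{\vec{q}}^{\theta}=\int|Tf|^{\theta}g$ and replacing $g$ by the $A_1$ weight $M[|g|^{1/\eta}]^{\eta}$ to invoke weighted $L^{\theta}$ bounds for $T$.
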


We organize the remaining part of this paper as follows:
In Sections \ref{sec preliminaries} and \ref{sec mixed Morrey spaces}, we investigate some properties 
and present examples of mixed Lebesgue spaces and mixed Morrey spaces, respectively.
We prove the boundedness of the iterated maximal operator in mixed spaces in Section \ref{sec iterated}. 
In Section \ref{sec dual inequality}, we show the dual inequality of Stein type for mixed Lebesgue spaces. 
Section \ref{sec vector-valued} is devoted to the vector-valued extension of Section \ref{sec iterated}. 
Finally, we prove that the fractional integral operator and singular integral operator are bounded in mixed Morrey spaces in Section \ref{sec fractional}.

\section{Preliminaries} \label{sec preliminaries}
\subsection{Mixed Lebesgue spaces}
In this subsection, we recall  the mixed Lebesgue space $L^{\vec{p}}({\mathbb R}^n)$
which is  introduced by Benedek and Panzone in \cite{B-P}.
This space has properties similar to classical Lebesgue space. 
First, $L^{\vec{p}}({\mathbb R}^n)$ is a Banach space for $1 \le \vec{p} \le\infty$.
H\"older's inequality holds:
Let $1<\vec{p}, \vec{q}<\infty$ and define $\vec{r}$ so that
$\frac{1}{\vec{p}}+\frac{1}{\vec{q}}=\frac{1}{\vec{r}}$.
If $f \in L^{\vec{p}}({\mathbb R}^n), g \in L^{\vec{q}}({\mathbb R}^n)$, then $fg \in L^{\vec{r}}({\mathbb R}^n)$, and
$\|fg\|_{\vec{r}} \le \|f\|_{\vec{p}}\|g\|_{\vec{q}}$.
Furthermore, the monotone convergence theorem, Fatou's lemma and the Lebesgue convergence theorem also follow.


\begin{remark} \label{rem 171228-1}
Let $\vec{p}\in(0,\infty]^n$ and $f$ be a measureable function on ${\mathbb R}^n$.
\begin{itemize}
\item[(i)]
If for each $p_i = p$, then
\[
\|f\|_{\vec{p}}
=
\|f\|_{(p_1,\ldots,p_n)}
=
\left(
\int_{\mathbb{R}^n}|f(x)|^p
{\rm d}x
\right)^{\frac{1}{p}}
=\|f\|_{p}
\]
and
\[
L^{\vec{p}}({\mathbb R}^n)=L^p({\mathbb R}^n).
\]

\item[(ii)]

For any $(x_2,\ldots,x_n) \in \mathbb{R}^{n-1}$, 
\[
\|f\|_{(p_1)}(x_2, \ldots,x_n) \equiv \left(
\int_{\mathbb{R}}|f(x_1, \ldots, x_n)|^{p_1}
{\rm d}x_1
\right)^{\frac{1}{p_1}}
\]
is a measurable function and defined on  ${\mathbb R}^{n-1}$. 
Moreover, we define
\[
\|f\|_{\vec{q}}= \|f\|_{(p_1,\ldots,p_j)} \equiv \|[\|f\|_{(p_1,\ldots,p_{j-1})}]\|_{(p_j)},
\]
where $\|f\|_{(p_1,\ldots,p_{j-1})}$ denotes $|f|$, if $j=1$
and $\vec{q}=(p_1, \ldots, p_j), j\leq n$.
Note that $\|f\|_{\vec{q}}$ is a measurable function of $(x_{j+1},\ldots,x_n)$ for $j<n$.


\end{itemize}
\end{remark}

Next, we consider the examples of $L^{\vec{p}}({\mathbb R}^n)$.

\begin{example} \label{ex 171109-1}
Let $f_1 \ldots, f_n \in L^0(\mathbb{R}) \setminus \{0\}$.
Then $f=\bigotimes_{j=1}^nf_j \in L^{\vec{p}}(\mathbb{R}^n)$ if and only if $f_j \in L^{p_j}({\mathbb R})$
for each $j=1, \ldots, n$. 
In fact,  
\begin{align*}
\|f\|_{\vec{p}}
&=
\left(
\int_{{\mathbb R}} \cdots
\left(
\int_{{\mathbb R}}
\left(
\int_{{\mathbb R}}\prod_{j=1}^n|f_j(x_j)|^{p_1}{\rm d}x_1
\right)^{\frac{p_2}{p_1}}
{\rm d}x_2
\right)^{\frac{p_3}{p_2}}
\cdots{\rm d}x_n
\right)^{\frac{1}{p_n}}\\
&=
\prod_{j=1}^n
\left(
\int_{\mathbb{R}}
|f_j(x_j)|^{p_j}
{\rm d}x_j
\right)^{\frac{1}{p_j}}
=
\prod_{j=1}^n
\|f_j\|_{p_j}.
\end{align*}

\end{example}

\begin{example}\label{ex 171108-1}
Let $Q$ be a cube. Then, for $0<\vec{p}\le\infty$,
\[
\|\chi_Q\|_{\vec{p}}=|Q|^{\frac{1}{n}(\frac{1}{p_1}+\cdots+ \frac{1}{p_n})}.
\]

In fact, we can write $Q=I_1 \times \cdots \times I_n$,
where each $I_j$ is an interval of equal length. Hence, $\chi_Q(x)=\prod_{j=1}^n\chi_{I_j}(x_j)$.
Using Example \ref{ex 171109-1}, we have
\begin{align*}
\|\chi_Q\|_{\vec{p}}
=
\prod_{j=1}^n
\|\chi_{I_j}\|_{p_j}
=
\prod_{j=1}^n
\left(
\int_{I_j} 
{\rm d}x_j
\right)^{\frac{1}{p_j}}
=
\prod_{j=1}^n
|I_j|^{\frac{1}{p_j}}.
\end{align*}
Notice that since $Q$ is a cube,
$|I_j|=\ell(Q)=|Q|^{\frac{1}{n}}.$
Thus, 
\[
\|\chi_Q\|_{\vec{p}}= \prod_{j=1}^n|I_j|^{\frac{1}{p_j}}
=|Q|^{\frac{1}{n}(\frac{1}{p_1}+\cdots+ \frac{1}{p_n})}.
\]
\end{example}

\begin{example}\label{ex 180112-1}
Let $m=(m_1,\ldots, m_n) \in \mathbb{Z}^n$ and $\{a_m\}_{m \in \mathbb{Z}^n} \subset \mathbb{C}$. Define
\[
f(x)=\sum_{m \in \mathbb{Z}^n}a_m\chi_{m+[0,1]^n}(x).
\]
Then, 
\[
\|f\|_{\vec{p}}
=
\left(\sum_{m_n \in \mathbb{Z}}\cdots\left(\sum_{m_1 \in \mathbb{Z}}
\left|a_{(m_1,\ldots, m_n)}\right|^{p_1}
\right)^{\frac{p_2}{p_1}}
\cdots\right)^{\frac{1}{p_n}}.
\]
In fact, 
\begin{align*}
\|f\|_{\vec{p}}
&=
\left(\int_{\mathbb{R}}\cdots\left(\int_{\mathbb{R}}
\left|\sum_{m \in \mathbb{Z}^n}a_m\chi_{m+[0,1]^n}(x)\right|^{p_1}
{\rm d}x_1\right)^{\frac{p_2}{p_1}}
\cdots{\rm d}x_n\right)^{\frac{1}{p_n}}\\
&=
\left(\sum_{m_n \in \mathbb{Z}}\int_{m_n}^{m_n+1}\cdots\left(\sum_{m_1 \in \mathbb{Z}}\int_{m_1}^{m_1+1}
\left|\sum_{m \in \mathbb{Z}^n}a_m\chi_{m+[0,1]^n}(x)\right|^{p_1}
{\rm d}x_1\right)^{\frac{p_2}{p_1}}
\cdots{\rm d}x_n\right)^{\frac{1}{p_n}}\\
&=
\left(\sum_{m_n \in \mathbb{Z}}\int_{m_n}^{m_n+1}\cdots\left(\sum_{m_1 \in \mathbb{Z}}\int_{m_1}^{m_1+1}
\left|a_{(m_1,\ldots, m_n)}\right|^{p_1}
{\rm d}x_1\right)^{\frac{p_2}{p_1}}
\cdots{\rm d}x_n\right)^{\frac{1}{p_n}}\\
&=
\left(\sum_{m_n \in \mathbb{Z}}\cdots\left(\sum_{m_1 \in \mathbb{Z}}
\left|a_{(m_1,\ldots, m_n)}\right|^{p_1}
\right)^{\frac{p_2}{p_1}}
\cdots\right)^{\frac{1}{p_n}}.\\
\end{align*}
\end{example}
We can consider the last term as a mixed sequence norm, which
computes respectively $\ell^{p_i}$-norm with respect to $m_i$.
We denote it by $\|\{a_m\}_{m \in \mathbb{Z}^n}\|_{\ell^{(p_1, \ldots, p_n)}}$:
\begin{align*}
\|\{a_m\}_{m \in \mathbb{Z}^n}\|_{\ell^{(p_1, \ldots, p_n)}}\nonumber
&=\|a_{(m_1, \ldots, m_n)}\|_{\ell^{(p_1, \ldots, p_n)}}\\ 
&\equiv
\left(\sum_{m_n \in \mathbb{Z}}\cdots\left(\sum_{m_2 \in \mathbb{Z}}\left(\sum_{m_1 \in \mathbb{Z}}
\left|a_{(m_1,\ldots, m_n)}\right|^{p_1}
\right)^{\frac{p_2}{p_1}}
\right)^{\frac{p_3}{p_2}}
\cdots\right)^{\frac{1}{p_n}}.
\end{align*}
Furthermore, this norm is also defined inductively:
\[
\|a_{(m_1, \ldots, m_n)}\|_{\ell^{(p_1, \ldots, p_j)}}
\equiv\left\|\left[\|a_{(m_1, \ldots, m_n)}\|_{\ell^{(p_1, \ldots, p_{j-1})}}\right]\right\|_{\ell^{(p_j)}},
\]
where $\|a_{(m_1, \ldots, m_n)}\|_{\ell^{(p_1, \ldots, p_{j-1})}}=|a_{(m_1, \ldots, m_n)}|$ if $j=1$ and
\[
\|a_{(m_1, \ldots, m_n)}\|_{\ell^{(p_j)}}\equiv\left(\sum_{m_j\in\mathbb{Z}}|a_{(m_1, \ldots, m_n)}|^{p_j}\right)^\frac{1}{p_j}
\]
for $j=1,\ldots, n$.

Next, we consider the properties of mixed Lebesgue spaces.
Since these proofs are elementary, we omit the detail.

\begin{proposition}\label{prop 180113-2}
Let $0<\vec{p}\le\infty$.
The mixed Lebesgue norm has the dilation relation: for all $f \in L^{\vec{p}}(\mathbb{R}^n)$ and $t>0$,
\begin{equation} \label{eq 171113-1}
\|f(t\cdot)\|_{\vec{p}}=t^{-\sum_{j=1}^n\frac{1}{p_j}}\|f\|_{\vec{p}}.
\end{equation}
\end{proposition}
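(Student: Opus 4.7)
The plan is to prove the identity by induction on the dimension $n$, using the recursive structure of the mixed norm recorded in Remark~\ref{rem 171228-1}(ii). The base case $n=1$ is the standard one-dimensional dilation identity $\|f(t\,\cdot)\|_{p_1}=t^{-1/p_1}\|f\|_{p_1}$, which follows from the single substitution $y=tx$ together with $\mathrm dx=t^{-1}\mathrm dy$.

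For the inductive step, I would freeze $x_n$, define $g_{x_n}(y_1,\ldots,y_{n-1}):=f(y_1,\ldots,y_{n-1},tx_n)$, and observe that $f(tx_1,\ldots,tx_n)=g_{x_n}(tx_1,\ldots,tx_{n-1})$. Applying the inductive hypothesis to $g_{x_n}$ with exponents $(p_1,\ldots,p_{n-1})$ yields, for almost every $x_n$,
\[
\|f(t\,\cdot,\ldots,t\,\cdot,tx_n)\|_{(p_1,\ldots,p_{n-1})}
= t^{-\sum_{j=1}^{n-1}1/p_j}\,\|f(\cdot,\ldots,\cdot,tx_n)\|_{(p_1,\ldots,p_{n-1})}.
\]
Using Remark~\ref{rem 171228-1}(ii) to rewrite $\|f(t\,\cdot)\|_{\vec p}$ as the $L^{p_n}$-norm in $x_n$ of the inner $(p_1,\ldots,p_{n-1})$-norm, the constant $t^{-\sum_{j=1}^{n-1}1/p_j}$ pulls out of the outer integral. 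A final substitution $y_n=tx_n$ in that one-dimensional $L^{p_n}$-integral supplies the missing factor $t^{-1/p_n}$, and what remains is precisely $\|f\|_{\vec p}$, completing the induction.

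I do not anticipate any genuine obstacle. The only point requiring minor care is the case where some $p_j=\infty$: in those coordinates the corresponding integral is replaced by an essential supremum, which is invariant under the nonzero dilation $y_j=tx_j$ and hence contributes a factor $t^0$, matching the convention $1/p_j=0$ on the right-hand side. Since the induction treats the variables one at a time, these coordinates are absorbed into the same argument without any change, and the exponent bookkeeping goes through verbatim.
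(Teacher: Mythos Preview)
Your induction argument is correct. The paper does not actually supply a proof of this proposition---it states just before Proposition~\ref{prop 180113-2} that ``since these proofs are elementary, we omit the detail''---so there is nothing to compare against; your approach via the recursive description of the mixed norm in Remark~\ref{rem 171228-1}(ii) is precisely the natural one and is what the author presumably had in mind.
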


\begin{proposition}[Fatou's property for $L^{\vec{p}}(\mathbb{R}^n)$]
\label{prop 171225-1}
Let $0<\vec{p}\le\infty$.
Let $\{f_j\}_{j=1}^\infty$ be a sequence of non-negative measurable functions on $\mathbb{R}^n$.
Then, 
\[
\left\|\varliminf_{j \to \infty}f_j\right\|_{\vec{p}}\le \varliminf_{j \to \infty}\|f_j\|_{\vec{p}}.
\]
\end{proposition}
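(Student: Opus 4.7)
The plan is to proceed by induction on the dimension $n$, exploiting the iterated structure of the mixed norm recorded in Remark \ref{rem 171228-1}(ii), namely
\[
\|f\|_{(p_1,\ldots,p_n)} = \bigl\|\,\|f\|_{(p_1,\ldots,p_{n-1})}\,\bigr\|_{(p_n)}.
\]
This reduces the proposition to a one-dimensional Fatou statement combined with a pointwise-in-$x_n$ application of the inductive hypothesis.

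For the base case $n=1$ and $p_1 < \infty$, I would apply the classical Fatou lemma to the non-negative sequence $\{|f_j|^{p_1}\}_j$ to get $\int \varliminf_j |f_j|^{p_1}\, \mathrm{d}x_1 \le \varliminf_j \int |f_j|^{p_1}\, \mathrm{d}x_1$, and then take $p_1$-th roots, invoking monotonicity of $t \mapsto t^{1/p_1}$ and the fact that this power commutes with $\varliminf$ on $[0,\infty]$. When $p_1 = \infty$, the statement is the standard monotonicity property $\mathrm{ess\,sup}\, \varliminf_j |f_j| \le \varliminf_j\, \mathrm{ess\,sup}\, |f_j|$, which is immediate from $|f_j| \le \|f_j\|_\infty$ a.e.\ and the definition of $\varliminf$.

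For the inductive step, assume the proposition holds on $\mathbb{R}^{n-1}$ with norm $\|\cdot\|_{(p_1,\ldots,p_{n-1})}$. Fix $x_n \in \mathbb{R}$ and consider the sections $f_j(\cdot, x_n) : \mathbb{R}^{n-1} \to [0,\infty]$. The inductive hypothesis gives, for each $x_n$,
\[
\bigl\|\varliminf_{j} f_j(\cdot, x_n)\bigr\|_{(p_1,\ldots,p_{n-1})} \le \varliminf_{j}\, \|f_j(\cdot, x_n)\|_{(p_1,\ldots,p_{n-1})}.
\]
Taking the $L^{p_n}$-norm in $x_n$ of both sides, using the monotonicity of $\|\cdot\|_{(p_n)}$ on non-negative functions and then the one-dimensional Fatou statement applied to the non-negative measurable functions $x_n \mapsto \|f_j(\cdot, x_n)\|_{(p_1,\ldots,p_{n-1})}$, chains into
\[
\bigl\|\varliminf_{j} f_j\bigr\|_{\vec{p}} \le \bigl\|\varliminf_{j}\|f_j(\cdot, x_n)\|_{(p_1,\ldots,p_{n-1})}\bigr\|_{(p_n)} \le \varliminf_{j} \|f_j\|_{\vec{p}}.
\]

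The main technical point — not really an obstacle but what requires care — is the measurability of the partial norms $x_n \mapsto \|f_j(\cdot, x_n)\|_{(p_1,\ldots,p_{n-1})}$, which is part of Remark \ref{rem 171228-1}(ii) and is needed before we may apply the one-dimensional $L^{p_n}$ Fatou inequality; a secondary bookkeeping point is the uniform treatment of the exponents $p_i = \infty$ in the induction, which is handled exactly as in the base case via essential suprema and the monotonicity of $\varliminf$.
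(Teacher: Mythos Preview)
Your argument is correct: the induction on $n$ via the iterated norm identity $\|f\|_{(p_1,\ldots,p_n)}=\bigl\|\,\|f\|_{(p_1,\ldots,p_{n-1})}\,\bigr\|_{(p_n)}$, together with the one-dimensional Fatou lemma, is exactly the natural route, and your handling of the $p_i=\infty$ case and of the measurability of the partial norms is appropriate. The paper itself omits the proof entirely, simply remarking that it is elementary, so there is no alternative argument to compare against; your write-up supplies the missing details in the expected way.
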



\subsection{$A_p$ weights and extrapolation}
By a weight we mean a measurable function
which satisfies $0<w(x)<\infty$
for almost all $x \in {\mathbb R}^n$.

\begin{definition}
Let $1<p<\infty$ and $w$ be a weight.
Then, $w$ is said to be an $A_p$ weight if
\[
[w]_{A_p}
=\sup_{Q \in \mathcal{Q}} \left(\frac{1}{|Q|}\int_Qw(x){\rm d}x\right)
\left(\frac{1}{|Q|}\int_Qw(x)^{-\frac{1}{p-1}}{\rm d}x\right)^{p-1}<\infty.
\]
A weight $w$ is said to be an $A_1$ weight if
\[
[w]_{A_1}
=\sup_{Q \in \mathcal{Q}} \left(\frac{1}{|Q|}\int_Qw(x){\rm d}x\right)
{\rm ess}\sup_{x \in Q}w(x)^{-1}<\infty.
\]
\end{definition}

Let $0<p<\infty$, and let $w$ be a weight.
One defines
\[
\|f\|_{L^p(w)}
\equiv
\left(
\int_{{\mathbb R}^n}|f(x)|^p w(x){\rm d}x
\right)^{\frac1p} \quad (f \in L^0(\mathbb{R}^n)).
\]
The space $L^p(w)$ is the set of all
measurable functions $f$
for which the norm
$\|f\|_{L^p(w)}$ is finite.
The space $L^p(w)$
is called the {\it weighted Lebesgue space}
or the $L^p${\it-space with weight} $w$.

When we consider estimates of $A_p$-weights,
we face the following type of estimate:
\begin{equation}\label{eq:170318-1}
\|T h\|_{L^p(W)}
\le N([W]_{A_{p}})
\|h\|_{L^p(W)}
\quad (h \in L^p(W)),
\end{equation}
where $T$ is a mapping from $L^p(W)$ to $L^0({\mathbb R}^n)$
and $N$ is a positive increasing function defined on $[1,\infty)$.
Extrapolation is a technique
to expand the validity
of (\ref{eq:170318-1})
for all $1<p<\infty$
based on the validity of $(\ref{eq:170318-1})$
for some $p_0$.
We invoke the following extrapolation result from \cite{CGMP06}:
\begin{proposition}\label{thm:161125-1}
Let $N=N(\cdot):[1,\infty) \to [1,\infty)$
be an increasing function,
and let $1<p_0,p<\infty$.
Suppose that we have a family
${\mathcal F}$ of the couple of measurable functions
$(f,g)$ satisfying
\begin{equation*}
\|f\|_{L^{p_0}(W)}
\le N([W]_{A_{p_0}})
\|g\|_{L^{p_0}(W)}
\end{equation*}
for all $(f,g) \in {\mathcal F}$ and $W \in A_{p_0}$.

Then 
\begin{equation*}
\|f\|_{L^{p}(w)}
\lesssim_{[w]_{A_p}}
\|g\|_{L^{p}(w)}
\end{equation*}
for all $(f,g) \in {\mathcal F}$ and $w \in A_{p}$.
\end{proposition}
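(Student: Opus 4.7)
The plan is to apply Rubio de Francia's extrapolation technique: promote the hypothesis at the exponent $p_0$ to the analogous weighted inequality at every $1<p<\infty$ by exhibiting, for each $w\in A_p$ and each $(f,g)\in\mathcal{F}$, a weight $W\in A_{p_0}$ and a H\"older factorisation through which the hypothesis can be applied and the conclusion at $(p,w)$ extracted.

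The basic tool is the Rubio de Francia iteration algorithm. For a weight $u$ and an exponent $s$ such that the Hardy--Littlewood maximal operator $M$ is bounded on $L^s(u)$, set
\[
R_u h\equiv\sum_{k=0}^\infty\frac{M^k h}{(2\,\|M\|_{L^s(u)\to L^s(u)})^k}.
\]
This converts a non-negative $h$ into an $A_1$-majorant of comparable size: $h\le R_u h$ pointwise, $\|R_u h\|_{L^s(u)}\le 2\|h\|_{L^s(u)}$, and $[R_u h]_{A_1}\le 2\|M\|_{L^s(u)\to L^s(u)}$.

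I would then split into the cases $p>p_0$ and $p<p_0$ (the case $p=p_0$ is the hypothesis). Consider $p>p_0$ first. Setting $F\equiv|f|\,w^{1/p}$, one has by unweighted duality
\[
\|f\|_{L^p(w)}^{p_0}=\|F^{p_0}\|_{L^{p/p_0}}=\sup\left\{\int F^{p_0}h\,{\rm d}x\,:\,h\ge 0,\ \|h\|_{L^{(p/p_0)'}}\le 1\right\}.
\]
Replace $h$ by $H\equiv R_1 h$ (the unweighted iteration on $L^{(p/p_0)'}$) to obtain $H\ge h$, $H\in A_1$, and $\|H\|_{L^{(p/p_0)'}}\le 2$. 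Define $W\equiv H\,w^{p_0/p}$; the crucial algebraic step is to verify that $W\in A_{p_0}$ with $[W]_{A_{p_0}}$ controlled by $[w]_{A_p}$ (and $p$, $p_0$), using $H\in A_1$ together with Jones's factorisation theorem for $A_p$-weights. Applying the hypothesis to $W$ and reversing the duality by H\"older then bounds the right-hand side by $\|g\|_{L^p(w)}^{p_0}$ up to the factor $2$. The case $p<p_0$ is symmetric: one works with the dual weight $\sigma\equiv w^{1-p'}\in A_{p'}$ and iterates with $R_\sigma$ on an appropriate weighted Lebesgue space, dualising the roles of $p$ and $p'$.

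The main obstacle is exactly the verification that the constructed weight $W=H\,w^{p_0/p}$ (and its dual analogue for $p<p_0$) lies in $A_{p_0}$ with Muckenhoupt constant depending only on $[w]_{A_p}$, $p$, and $p_0$. This reduces to matching exponents in a factorisation $W=V_1 V_2^{1-p_0}$ so that both $V_1$ and $V_2$ come out in $A_1$ with controlled constants, combining the $A_1$-weight $H$ with the Jones decomposition $w=w_1 w_2^{1-p}$ of $w\in A_p$. All remaining steps are H\"older inequalities and routine exponent arithmetic.
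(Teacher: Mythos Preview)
The paper does not prove this proposition at all: it is quoted from \cite{CGMP06} and used as a black box (see the sentence ``We invoke the following extrapolation result from \cite{CGMP06}'' immediately preceding the statement). So there is no proof in the paper to compare your attempt against.

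Your overall plan---Rubio de Francia iteration plus a duality/H\"older argument, splitting into $p>p_0$ and $p<p_0$---is exactly the standard route used in the literature, including in \cite{CGMP06} and in Duoandikoetxea's simplification. However, the specific construction you propose has a genuine difficulty at precisely the point you flag as ``the main obstacle''. Iterating \emph{unweighted} on $L^{(p/p_0)'}$ to get $H\in A_1$ and then setting $W=H\,w^{p_0/p}$ does not combine cleanly with the Jones factorisation $w=w_1 w_2^{1-p}$: writing $W=V_1 V_2^{1-p_0}$ forces $V_2=w_2^{\alpha}$ with $\alpha=\dfrac{(p-1)p_0}{(p_0-1)p}>1$ when $p>p_0$, and $w_2^{\alpha}$ need not be $A_1$ for $\alpha>1$; likewise $V_1$ would have to be a product of two $A_1$ weights, which is not $A_1$ in general. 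The usual fix is not to iterate unweighted but to run the Rubio de Francia algorithm in a \emph{weighted} space chosen so that the output $H$ satisfies $M(Hw)\lesssim Hw$ (equivalently, one iterates the auxiliary operator $g\mapsto M(gw)/w$ on $L^{(p/p_0)'}(w)$, which is bounded there because $w^{1-(p/p_0)'}\in A_{(p/p_0)'}$ follows from $w\in A_p$ after a short exponent check). This yields directly $Hw\in A_1\subset A_{p_0}$ and avoids Jones factorisation entirely. With that adjustment your sketch goes through.
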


\section{Mixed Morrey spaces} \label{sec mixed Morrey spaces}

In this section, we discuss some properties and examples of mixed Morrey spaces.
We recall the definition of mixed Morrey spaces.
Let $0<\vec{q}\le \infty$, $0<p\le\infty$ satisfy
\[
\sum_{j=1}^n\frac{1}{q_j} \ge \frac{n}{p}.
\]
Then define the {\it mixed Morrey norm}
$\|\cdot\|_{\mathcal{M}^p_{\vec{q}}(\mathbb{R}^n)}$ by
\[
\|f\|_{\mathcal{M}^p_{\vec{q}}(\mathbb{R}^n)}
\equiv
\sup\left\{
|Q|^{\frac{1}{p}-\frac{1}{n}
\left(
\sum_{j=1}^n\frac{1}{q_j}
\right)
}
\|f\chi_Q\|_{\vec{q}}
\,:\,\mbox{ $Q$ is a cube in ${\mathbb R}^n$}\right\}.
\]
We define the {\it mixed Morrey space} ${\mathcal{M}^p_{\vec{q}}}(\mathbb{R}^n)$ 
to be the set of all $f \in L^0(\mathbb{R}^n)$
with $\|f\|_{\mathcal{M}^p_{\vec{q}}(\mathbb{R}^n)} <\infty$.

\begin{remark}
Let $\vec{q}\in (0, \infty]^n$ and $f \in L^0(\mathbb{R}^n)$.
\begin{itemize}

\item[(i)]
If for each $q_i=q$, then by Remark \ref{rem 171228-1}
\[
|Q|^{\frac{1}{p}-\frac{1}{n}
\left(
\sum_{j=1}^n\frac{1}{q_j}
\right)
}
\|f\chi_Q\|_{\vec{q}}
=|Q|^{\frac{1}{p}-\frac{1}{n}
\left(
\sum_{j=1}^n\frac{1}{q}
\right)
}
\|f\chi_Q\|_{\vec{q}}
=|Q|^{\frac{1}{p}-\frac{1}{q}}
\|f\chi_Q\|_{q}.
\]
Thus, taking the supremum over the all cubes in $\mathbb{R}^n$, we obtain
\[
\|f\|_{{\mathcal{M}^p_{\vec{q}}}(\mathbb{R}^n)}
=
\|f\|_{{\mathcal{M}^p_q}(\mathbb{R}^n)},
\]
and
\[
\mathcal{M}^p_{\vec{q}}(\mathbb{R}^n)
=
\mathcal{M}^p_q(\mathbb{R}^n),
\]
with coincidence of norms.
\item[(ii)]
In particular, let
\[
\hspace{10pt}
p=\frac{n}{1/q_1+ \cdots +1/q_n}.
\]
Then, since
\begin{align*}
\|f\|_{\mathcal{M}^p_{\vec{q}}(\mathbb{R}^n)}
&=
\sup\left\{
|Q|^{\frac{1}{p}-\frac{1}{n}
\left(
\sum_{j=1}^n\frac{1}{q_j}
\right)
}
\|f\chi_Q\|_{\vec{q}}
\,:\,\mbox{ $Q$ is a cube in ${\mathbb R}^n$}\right\}\\
&=
\sup\left\{
\|f\chi_Q\|_{\vec{q}}
\,:\,\mbox{ $Q$ is a cube in ${\mathbb R}^n$}\right\}
=\|f\|_{\vec{q}},
\end{align*}
we obtain
\[
L^{\vec{q}}(\mathbb{R}^n)
={\mathcal{M}^p_{\vec{q}}}(\mathbb{R}^n),
\]
with coincidence of norms.

\item[(iii)]
The mixed Morrey space ${\mathcal{M}^p_{\vec{q}}}(\mathbb{R}^n)$ is also a Banach space for $1\le \vec{q}\le\infty$ and $0<p\le\infty$.
Although the proof is easy, we give the proof for the sake of completeness.
First, we will check the triangle inequality.
For $f, g \in {\mathcal{M}^p_{\vec{q}}}({\mathbb R}^n)$, 
\begin{align*}
\|f+g\|_{{\mathcal{M}^p_{\vec{q}}}({\mathbb R}^n)}
&=
\sup_{Q}|Q|^{\frac{1}{p}-\frac{1}{n}
\left(\sum_{j=1}^n\frac{1}{q_j}\right)}
\left\|(f+g)\chi_Q\right\|_{\vec{q}}\\
&\le
\sup_{Q}|Q|^{\frac{1}{p}-\frac{1}{n}
\left(\sum_{j=1}^n\frac{1}{q_j}\right)}
\left(\left\|f\chi_Q\right\|_{\vec{q}}+\left\|g\chi_Q\right\|_{\vec{q}}\right)\\
&\le
\|f\|_{\mathcal{M}^p_{\vec{q}}(\mathbb{R}^n)}+\|g\|_{\mathcal{M}^p_{\vec{q}}(\mathbb{R}^n)}.
\end{align*}
The positivity and the homogeneity are both clear.
Thus, $\mathcal{M}^p_{\vec{q}}({\mathbb R}^n)$ is a normed space.
It remains to check the completeness.

Let $\{f_j\}_{j=1}^{\infty} \subset \mathcal{M}^p_{\vec{q}}({\mathbb R}^n)$
and $\sum_{j=1}^{\infty} \|f_j\|_{\mathcal{M}^p_{\vec{q}}({\mathbb R}^n)} <\infty$. Then,
\[
\left
\|\sum_{j=1}^{J} |f_j|
\right\|_{\mathcal{M}^p_{\vec{q}}({\mathbb R}^n)} 
\le \sum_{j=1}^{J} \|f_j\|_{\mathcal{M}^p_{\vec{q}}({\mathbb R}^n)} 
\le \sum_{j=1}^{\infty} \|f_j\|_{\mathcal{M}^p_{\vec{q}}({\mathbb R}^n)}<\infty.
\]
By Proposition \ref{prop 171225-1},
\begin{align*}
\left
\|\sum_{j=1}^{\infty} |f_j|\right\|_{\mathcal{M}^p_{\vec{q}}({\mathbb R}^n)} 
&=
\left\|\lim_{J \to \infty} \sum_{j=1}^{J} |f_j|\right\|_{\mathcal{M}^p_{\vec{q}}({\mathbb R}^n)} 
\le
\lim_{J \to \infty} \left\|\sum_{j=1}^{J} |f_j|\right\|_{\mathcal{M}^p_{\vec{q}}({\mathbb R}^n)} \\
&\le
\lim_{J \to \infty} \sum_{j=1}^{J} \left\|f_j\right\|_{\mathcal{M}^p_{\vec{q}}({\mathbb R}^n)} 
=
\sum_{j=1}^{\infty} \|f_j\|_{\mathcal{M}^p_{\vec{q}}({\mathbb R}^n)} 
<\infty.
\end{align*}
Thus, for almost everywhere $x \in {\mathbb R}^n$,
\[
\sum_{j=1}^{\infty} |f_j(x)| <\infty. 
\]
Therefore, there exists a function $g$ such that the limit
\[
g(x) \equiv \lim_{J \to \infty} \sum_{j=1}^{J} f_j(x)
\]
exists for almost everywhere $x \in {\mathbb R}^n$.
If $\sum_{j=1}^{\infty} |f_j(x)|=\infty$, then it will be understood that $g(x)\equiv0$.
Again, by Proposition \ref{prop 171225-1}, for $m>1$
\begin{align*}
\left
\|g-\sum_{j=1}^{m-1} f_j
\right\|_{\mathcal{M}^p_{\vec{q}}({\mathbb R}^n)}
&=\left\|
\sum_{j=m}^{\infty} f_j
\right\|_{\mathcal{M}^p_{\vec{q}}({\mathbb R}^n)}
= 
\left\|\lim_{J \to \infty} \sum_{j=m}^{J} f_j
\right\|_{\mathcal{M}^p_{\vec{q}}({\mathbb R}^n)}\\ 
&\le
\lim_{J \to \infty} \left\|\sum_{j=m}^{J} f_j
\right\|_{\mathcal{M}^p_{\vec{q}}({\mathbb R}^n)} 
\le
\lim_{J \to \infty} \sum_{j=m}^{J} \left\|f_j
\right\|_{\mathcal{M}^p_{\vec{q}}({\mathbb R}^n)} \\
&=
\sum_{j=m}^{\infty} \|f_j\|_{\mathcal{M}^p_{\vec{q}}({\mathbb R}^n)} .
\end{align*}
Letting $m \to \infty$, we obtain
\[
g=\sum_{j=1}^{\infty} f_j
\]
in $\mathcal{M}^p_{\vec{q}}({\mathbb R}^n)$.

\end{itemize}
\end{remark}

First, we give the properties of the {\it mixed Morrey spaces.}

\begin{proposition} \label{prop 171120-2} 
Let $\vec{q}\in (0, \infty]^n$ and $p\in (0, \infty]$.
The mixed Morrey norm has the following dilation relation: for all $f \in L^0(\mathbb{R}^n)$ and $t>0$,
\begin{equation} \label{eq 171118-2}
\|f(t\cdot)\|_{\mathcal{M}^p_{\vec{q}}(\mathbb{R}^n)}
=t^{-\frac{n}{p}}\|f\|_{\mathcal{M}^p_{\vec{q}}(\mathbb{R}^n)}.
\end{equation}
\end{proposition}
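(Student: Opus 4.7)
The plan is to reduce the claim directly to the dilation identity for the mixed Lebesgue norm recorded in Proposition \ref{prop 180113-2}, combined with the observation that the supremum defining the mixed Morrey norm is taken over the family of all cubes, which is invariant under the rescaling $Q \mapsto tQ$.

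First I fix $t>0$ and an arbitrary cube $Q$ with side length $\ell(Q)$, and rewrite
\[
f(t\,\cdot)\chi_Q(\cdot) = (f\chi_{tQ})(t\,\cdot),
\]
where $tQ$ is the cube obtained by dilating $Q$ by the factor $t$ (so $\ell(tQ)=t\,\ell(Q)$ and $|tQ|=t^n|Q|$). Applying Proposition \ref{prop 180113-2} to the function $f\chi_{tQ}$ gives
\[
\|f(t\,\cdot)\chi_Q\|_{\vec{q}} = t^{-\sum_{j=1}^n 1/q_j}\,\|f\chi_{tQ}\|_{\vec{q}}.
\]

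Next I insert this into the definition of the mixed Morrey norm. Writing $\alpha = \frac1p-\frac1n\sum_{j=1}^n\frac{1}{q_j}$ and using $|Q|=t^{-n}|tQ|$, the exponent $|Q|^{\alpha}$ becomes $t^{-n\alpha}|tQ|^{\alpha} = t^{-n/p+\sum 1/q_j}|tQ|^{\alpha}$. Hence
\begin{align*}
\|f(t\,\cdot)\|_{\mathcal{M}^p_{\vec{q}}(\mathbb{R}^n)}
&= \sup_{Q} |Q|^{\alpha}\,\|f(t\,\cdot)\chi_Q\|_{\vec{q}} \\
&= \sup_{Q} t^{-n/p+\sum 1/q_j}\,|tQ|^{\alpha}\,t^{-\sum 1/q_j}\,\|f\chi_{tQ}\|_{\vec{q}} \\
&= t^{-n/p}\,\sup_{Q} |tQ|^{\alpha}\,\|f\chi_{tQ}\|_{\vec{q}}.
\end{align*}

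Finally, since the map $Q\mapsto tQ$ is a bijection of the family of all cubes (with sides parallel to the axes) onto itself, the last supremum equals $\|f\|_{\mathcal{M}^p_{\vec{q}}(\mathbb{R}^n)}$, which yields \eqref{eq 171118-2}. There is no real obstacle; the only point requiring a little care is the bookkeeping of the two different exponents $\sum 1/q_j$ (from the Lebesgue dilation) and $n/p - \sum 1/q_j$ (from the factor $|Q|^{\alpha}$), which cancel to leave exactly $t^{-n/p}$ as claimed.
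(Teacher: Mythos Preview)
Your proof is correct and follows essentially the same approach as the paper's: both arguments apply the mixed Lebesgue dilation identity (Proposition~\ref{prop 180113-2}) and then use that the family of cubes is invariant under the rescaling, the paper writing this explicitly as $Q(x,r)\mapsto Q(tx,tr)$ while you write it as $Q\mapsto tQ$. One small caution: in this paper the notation $cQ$ (as in $5Q$, $2^{l}Q$) usually denotes the concentric dilate, so it would be cleaner to state explicitly that your $tQ$ means $\{ty:y\in Q\}$, which is exactly what the identity $f(t\,\cdot)\chi_Q=(f\chi_{tQ})(t\,\cdot)$ requires.
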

\begin{proof}
Although the proof is elementary again, we give the proof for the sake of completeness.
To see (\ref{eq 171118-2}), using (\ref{eq 171113-1}), we obtain
\begin{align*}
\|f(t\cdot)\|_{\mathcal{M}^p_{\vec{q}}(\mathbb{R}^n)}
&=
\sup_{Q=Q(x,r)}
|Q(x,r)|^{\frac{1}{p}-\frac{1}{n}
\left(
\sum_{j=1}^n\frac{1}{q_j}
\right)
}
\|f(t\cdot)\chi_{Q(x,r)}\|_{\vec{q}}\\
&=
\sup_{Q=Q(x,r)}
|Q(x,r)|^{\frac{1}{p}-\frac{1}{n}
\left(
\sum_{j=1}^n\frac{1}{q_j}
\right)
}
t^{-\sum_{j=1}^n\frac{1}{q_j}}
\|f\chi_{Q(tx,tr)}\|_{\vec{q}}\\
&=
\sup_{Q=Q(x,r)}
|Q(tx,tr)|^{\frac{1}{p}-\frac{1}{n}
\left(
\sum_{j=1}^n\frac{1}{q_j}
\right)
}
t^{-\frac{n}{p}}
\|f\chi_{Q(tx,tr)}\|_{\vec{q}}\\
&=
t^{-\frac{n}{p}}\|f\|_{\mathcal{M}^p_{\vec{q}}(\mathbb{R}^n)}.
\end{align*}
\end{proof}
\begin{proposition}\label{prop 180521-1}

Let $0<\vec{q} \le \vec{r} \le \infty$, $0<p<\infty$, and
assume $\frac{1}{r_1}+\cdots+\frac{1}{r_n} \ge \frac{n}{p}$.
Then, 
\[
{\mathcal{M}^p_{\vec{r}}}(\mathbb{R}^n) 
\subset
{\mathcal{M}^p_{\vec{q}}}(\mathbb{R}^n).
\]
\end{proposition}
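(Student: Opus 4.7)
The plan is to reduce the embedding to an iterated application of the one-dimensional Hölder inequality on intervals, variable by variable, following the inductive structure of the mixed norm.

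First, I would fix an arbitrary cube $Q = I_1 \times \cdots \times I_n$ with $|I_j| = \ell(Q) = |Q|^{1/n}$, and aim to show the pointwise-in-$Q$ bound
\begin{equation*}
\|f\chi_Q\|_{\vec{q}} \le |Q|^{\frac{1}{n}\sum_{j=1}^n\left(\frac{1}{q_j}-\frac{1}{r_j}\right)} \|f\chi_Q\|_{\vec{r}}.
\end{equation*}
For the innermost integral, for a.e.\ fixed $(x_2,\ldots,x_n)$, since $q_1 \le r_1$, the classical Hölder inequality on the bounded interval $I_1$ gives
\begin{equation*}
\left(\int_{I_1}|f|^{q_1}\,dx_1\right)^{1/q_1} \le |I_1|^{1/q_1-1/r_1}\left(\int_{I_1}|f|^{r_1}\,dx_1\right)^{1/r_1}.
\end{equation*}
Then I would apply the same one-dimensional Hölder inequality successively in the $x_2,\ldots,x_n$ variables to the function inside the next $L^{q_j}$ layer, using $q_j\le r_j$ and the fact that the constant factors $|I_j|^{1/q_j-1/r_j}$ pull out of subsequent norms. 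Because each $|I_j|=|Q|^{1/n}$, the accumulated constant is exactly $|Q|^{\frac{1}{n}\sum_{j=1}^n(1/q_j-1/r_j)}$, as claimed.

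Multiplying by the normalizing factor $|Q|^{1/p-\frac{1}{n}\sum 1/q_j}$, the powers of $|Q|$ collapse to $|Q|^{1/p-\frac{1}{n}\sum 1/r_j}$, yielding
\begin{equation*}
|Q|^{\frac{1}{p}-\frac{1}{n}\sum_{j=1}^n\frac{1}{q_j}}\|f\chi_Q\|_{\vec{q}} \le |Q|^{\frac{1}{p}-\frac{1}{n}\sum_{j=1}^n\frac{1}{r_j}}\|f\chi_Q\|_{\vec{r}}.
\end{equation*}
Taking the supremum over all cubes $Q$ on both sides gives $\|f\|_{\mathcal{M}^p_{\vec{q}}(\mathbb{R}^n)} \le \|f\|_{\mathcal{M}^p_{\vec{r}}(\mathbb{R}^n)}$. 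I should also note that the condition needed for $\mathcal{M}^p_{\vec{q}}(\mathbb{R}^n)$ to be defined, namely $\sum 1/q_j \ge n/p$, is automatic from the hypotheses since $q_j \le r_j$ implies $\sum 1/q_j \ge \sum 1/r_j \ge n/p$.

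There is essentially no main obstacle: the only delicate bookkeeping is checking that the iterated Hölder step is legitimate when some $q_j$ or $r_j$ equals $\infty$, which requires the standard convention that the integral layer is replaced by an essential supremum and $|I_j|^{1/q_j-1/r_j}$ is interpreted accordingly; these cases cause no trouble. The remainder is routine exponent arithmetic.
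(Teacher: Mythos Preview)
Your proposal is correct and follows essentially the same approach as the paper: both arguments fix a cube $Q=I_1\times\cdots\times I_n$, apply the one-dimensional H\"older inequality iteratively from the innermost variable outward to obtain $\|f\chi_Q\|_{\vec{q}}\le \ell(Q)^{\sum_j(1/q_j-1/r_j)}\|f\chi_Q\|_{\vec{r}}$, multiply by the normalizing power of $|Q|$, and take the supremum over cubes. Your additional remarks on the endpoint cases and on why $\sum_j 1/q_j\ge n/p$ is automatic are correct and merely make explicit what the paper leaves implicit.
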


\begin{proof}
To get this inclusion, it suffices to show that for all $f \in L^0{(\mathbb{R}^n)}$
and all cubes $Q$,
\begin{equation} \label{eq 171029-1}
|Q|^{\frac{1}{p}-\frac{1}{n}
\left(
\sum_{j=1}^n\frac{1}{q_j}
\right)
}
\|f\chi_Q\|_{\vec{q}}
\le
|Q|^{\frac{1}{p}-\frac{1}{n}
\left(
\sum_{j=1}^n\frac{1}{r_j}
\right)
}
\|f\chi_Q\|_{\vec{r}}.
\end{equation}
Once we can show (\ref{eq 171029-1}),
taking the supremum over the all cubes in $\mathbb{R}^n$,
we have
\[
\|f\|_{\mathcal{M}^p_{\vec{q}}(\mathbb{R}^n)}
\le
\|f\|_{\mathcal{M}^p_{\vec{r}}(\mathbb{R}^n)}.
\]
This implies that
\[
{\mathcal{M}^p_{\vec{r}}}(\mathbb{R}^n) 
\subset
{\mathcal{M}^p_{\vec{q}}}(\mathbb{R}^n).
\]
So we shall show (\ref{eq 171029-1}).
Note that we can write $Q=I_1 \times \cdots \times I_n$, where each $I_j$ is an interval of equal length.
Using H\"{o}lder's inequality, we have
\begin{align*} 
&\|f\chi_Q\|_{\vec{q}}\\
&=\left(
\int_{I_n} \cdots
\left(
\int_{I_2}
\left(
\int_{I_1}|f(x)|^{q_1}{\rm d}x_1
\right)^{\frac{q_2}{q_1}}
{\rm d}x_2
\right)^{\frac{q_3}{q_2}}
\cdots{\rm d}x_n
\right)^{\frac{1}{q_n}}\\
&\le
\left(
\int_{I_n} \cdots
\left(
\int_{I_2}
\left[
\left(
\int_{I_1}|f(x)|^{{q_1}\frac{r_1}{q_1}}{\rm d}x_1
\right)^{\frac{q_1}{r_1}}
\left(
\int_{I_1}{\rm d}x_1
\right)^{1-\frac{q_1}{r_1}}
\right]^{\frac{q_2}{q_1}}
{\rm d}x_2
\right)^{\frac{q_3}{q_2}}
\cdots{\rm d}x_n
\right)^{\frac{1}{q_n}}\\
&=
\left(
\int_{I_n} \cdots
\left(
\int_{I_2}
\|f\chi_{I_1 \times {\mathbb R}^{n-1}}\|_{(r_1)}(x_2, \ldots, x_n)^{q_2}
|I_1|^{\frac{q_2}{q_1}-\frac{q_2}{r_1}}
{\rm d}x_2
\right)^{\frac{q_3}{q_2}}
\cdots{\rm d}x_n
\right)^{\frac{1}{q_n}}.\\
\end{align*}
Since $|I_1|=\ell(Q)$, 
\begin{align*}
\|f\chi_Q\|_{\vec{q}}
&\le
\left(
\int_{I_n} \cdots
\left(
\int_{I_2}
\|f\chi_{I_1 \times {\mathbb R}^{n-1}}\|_{(r_1)}(x_2, \ldots, x_n)^{q_2}
\ell(Q)
^{\frac{q_2}{q_1}-\frac{q_2}{r_1}}
{\rm d}x_2
\right)^{\frac{q_3}{q_2}}
\cdots{\rm d}x_n
\right)^{\frac{1}{q_n}}\\
&=
\ell(Q)
^{\frac{1}{q_1}-\frac{1}{r_1}}
\left(
\int_{I_n} \cdots
\left(
\int_{I_2}
\|f\chi_{I_1 \times {\mathbb R}^{n-1}}\|_{(r_1)}(x_2, \ldots, x_n)^{q_2}
{\rm d}x_2
\right)^{\frac{q_3}{q_2}}
\cdots{\rm d}x_n
\right)^{\frac{1}{q_n}}.\\
\end{align*}
Iterating this procedure, we get
\begin{align*}
\|f\chi_Q\|_{\vec{q}}
&\le
\ell(Q)
^{\left(
\sum_{j=1}^{n-1}\frac{1}{q_j}
\right)
-\left(
\sum_{j=1}^{n-1}\frac{1}{r_j}
\right)}
\left(
\int_{I_n}
\|f\chi_{I_1 \times \cdots \times I_{n-1} \times {\mathbb R}}\|_{(r_1, \cdots, r_{n-1})}(x_n)^{q_n}
{\rm d}x_n
\right)^{\frac{1}{q_n}}\\
&\le
\ell(Q)
^{\left(
\sum_{j=1}^n\frac{1}{q_j}
\right)
-\left(
\sum_{j=1}^n\frac{1}{r_j}
\right)}
\|f\chi_{Q}\|_{\vec{r}}.
\end{align*}
Thus, we obtain
\[
|Q|^{\frac{1}{p}-\frac{1}{n}
\left(
\sum_{j=1}^n\frac{1}{q_j}
\right)
}
\|f\chi_Q\|_{\vec{q}}
\le
|Q|^{\frac{1}{p}-\frac{1}{n}
\left(
\sum_{j=1}^n\frac{1}{r_j}
\right)
}
\|f\chi_Q\|_{\vec{r}}.
\]

\end{proof}
Let us give some examples.

\begin{example} \label{ex 171120-6}
In the classical case, it is known that $f(x)=|x|^{-\frac{n}{p}} \in \mathcal{M}^p_q(\mathbb{R}^n)$ if $q<p$.
Let $\vec{q}=(q_1, \ldots, q_n)$. Using the above embedding, we have
\[
\mathcal{M}^p_{\tilde{q}}(\mathbb{R}^n)
=
\mathcal{M}^p_{(\underbrace{\tilde{q}, \ldots, \tilde{q}}_{\mbox{$n$ times}})}(\mathbb{R}^n)
\subset
\mathcal{M}^p_{\vec{q}}(\mathbb{R}^n),
\]
where $\tilde{q}=\max(q_1, \ldots, q_n).$
 Thus, if $\max(q_1, \ldots, q_n)=\tilde{q}<p$,
\[
f(x)=|x|^{-\frac{n}{p}} \in \mathcal{M}^p_{\vec{q}}(\mathbb{R}^n).
\]
\end{example}

\begin{remark}
In Example \ref{ex 171120-6}, the condition  
\begin{equation} \label{eq 171120-7}
\max(q_1, \ldots, q_n)=\tilde{q}<p
\end{equation}
is a sufficient condition but is not a necessary condition for
$f(x)=|x|^{-\frac{n}{p}} \in \mathcal{M}^p_{\vec{q}}(\mathbb{R}^n)$. 
In fact, let $\vec{s}=(s_1, \underbrace{\infty, \ldots, \infty}_{\mbox{$(n-1)$ times}})$ and $s_1<\frac{q_1}{n}$.
Then, by Proposition \ref{prop 171120-2},
\begin{align*}
\|f\|_{\mathcal{M}^p_{\vec{s}}(\mathbb{R}^n)}
&=
\sup_{Q=Q(x,r)}
|Q(x,r)|^{\frac{1}{p}-\frac{1}{n}
\left(
\sum_{j=1}^n\frac{1}{q_j}
\right)
}
\|f\chi_{Q(x,r)}\|_{\vec{s}}\\
&=
\sup_{r>0}
|Q(0,r)|^{\frac{1}{p}-\frac{1}{n}
\left(
\sum_{j=1}^n\frac{1}{q_j}
\right)
}
\|f\chi_{Q(0,r)}\|_{\vec{s}}\\
&=
|Q(0,1)|^{\frac{1}{p}-\frac{1}{n}
\left(
\sum_{j=1}^n\frac{1}{q_j}
\right)
}
\|f\chi_{Q(0,1)}\|_{\vec{s}}\\
&=
|Q(0,1)|^{\frac{1}{p}-\frac{1}{n}\left(\sum_{j=1}^n\frac{1}{q_j}\right)}
\left\|
\left(
\int_{-1}^1
|x|^{-\frac{n}{p}s_1}
{\rm d}x_1
\right)^{\frac{1}{s_1}}\chi_{[-1, 1]^{n-1}}
\right\|_{(\underbrace{\infty, \ldots, \infty}_{\mbox{$(n-1)$ times}})}.
\end{align*}
Since $s_1<\frac{q_1}{n}$, 
$\|f\|_{\mathcal{M}^p_{\vec{s}}(\mathbb{R}^n)}<\infty$
and $f \in \mathcal{M}^p_{\vec{s}}(\mathbb{R}^n)$.
But $\vec{s}$ does not satisfy (\ref{eq 171120-7}). 
\end{remark}

\begin{example} \label{ex 171120-1}
Let $0<\vec{q}\le\infty$ and assume that
$q_j<p_j$ if $p_j<\infty$
and that 
$q_j\le \infty$ if $p_j=\infty$
 $(j=1, \ldots, n)$.
Let
\begin{equation} \label{eq 171120-5}
\sum_{j=1}^n\frac{1}{p_j}=\frac{n}{p}.
\end{equation}
Then,
\[
f(x)=\prod_{j=1}^n |x_j|^{-\frac{1}{p_j}} \in \mathcal{M}^p_{\vec{q}}(\mathbb{R}^n).
\]

In fact, letting $Q=I_1 \times \cdots \times I_n$, we obtain
\begin{align*}
\|f\chi_Q\|_{\vec{q}} 
&=
\left(
\int_{I_n} \cdots
\left(
\int_{I_2}
\left(
\int_{I_1}
\prod_{j=1}^n |x_j|^{-\frac{q_1}{p_j}}
{\rm d}x_1
\right)^{\frac{q_2}{q_1}}
{\rm d}x_2
\right)^{\frac{q_3}{q_2}}
\cdots{\rm d}x_n
\right)^{\frac{1}{q_n}}\\
&=
\prod_{j=1}^n
\left(
\int_{I_j}
|x_j|^{-\frac{q_j}{p_j}}
{\rm d}x_j
\right)^{\frac{1}{q_j}}.
\end{align*}
To estimate this integral, letting $\ell(Q)=r$, we have
\begin{align*}
\int_{I_j}
|x_j|^{-\frac{q_j}{p_j}}
{\rm d}x_j
&\le
\int_{-r/2}^{r/2}
|x_j|^{-\frac{q_j}{p_j}}
{\rm d}x_j
=
2\int_{0}^{r/2}
|x_j|^{-\frac{q_j}{p_j}}
{\rm d}x_j
\lesssim
r^{1-\frac{q_j}{p_j}}.
\end{align*}
Thus, 
\begin{align*}
\|f\chi_Q\|_{\vec{q}}
\lesssim
\prod_{j=1}^n
\left(
r^{1-\frac{q_j}{p_j}}
\right)^{\frac{1}{q_j}}
=
\prod_{j=1}^n
r^{{\frac{1}{q_j}}-\frac{1}{p_j}}
=
r^{\sum_{j=1}^n{\frac{1}{q_j}}-\sum_{j=1}^n\frac{1}{p_j}}.
\end{align*}
Since
$\sum_{j=1}^n\frac{1}{p_j}=\frac{n}{p}$,
\[
r^{\frac{n}{p}-\sum_{j=1}^n{\frac{1}{q_j}}}
\|f\chi_Q\|_{\vec{q}}
\lesssim
1.
\]
Taking supremum over all the cubes, we obtain
\[
\|f\|_{ \mathcal{M}^p_{\vec{q}}(\mathbb{R}^n)}
\lesssim
1,
\]
that is, 
\[
f(x)=\prod_{j=1}^n |x_j|^{-\frac{1}{p_j}} \in \mathcal{M}^p_{\vec{q}}(\mathbb{R}^n).
\]
\end{example}

\begin{remark}
In Example \ref{ex 171120-1}, condition (\ref{eq 171120-5})
is a necessary and sufficient condition for 
$f(x)=\prod_{j=1}^n |x_j|^{-\frac{1}{p_j}}$ to be a member in $\mathcal{M}^p_{\vec{q}}(\mathbb{R}^n).$
In fact, let $f \in \mathcal{M}^p_{\vec{q}}(\mathbb{R}^n)$ and $f \neq 0$.
Applying Proposition \ref{prop 171120-2}, we have
\begin{equation} \label{eq 171120-3}
\|f(t\cdot)\|_{\mathcal{M}^p_{\vec{q}}(\mathbb{R}^n)}
=t^{-\frac{n}{p}}\|f\|_{\mathcal{M}^p_{\vec{q}}(\mathbb{R}^n)} \quad (t>0).
\end{equation}
On the other hand, since $f(tx)=t^{-\sum_{j=1}^n\frac{1}{p_j}}f(x)$,
\begin{equation} \label{eq 171120-4}
\|f(t\cdot)\|_{\mathcal{M}^p_{\vec{q}}(\mathbb{R}^n)}
=
t^{-\sum_{j=1}^n\frac{1}{p_j}}
\|f\|_{\mathcal{M}^p_{\vec{q}}(\mathbb{R}^n)}.
\end{equation}
By (\ref{eq 171120-3}) and (\ref{eq 171120-4}), for all $t>0$,
\[
t^{-\sum_{j=1}^n \frac{1}{p_j}}=t^{-\frac{n}{p}}.
\]
Thus, we obtain (\ref{eq 171120-5}).
\end{remark}

\begin{example}
Let $Q$ be a cube and $\vec{q} \in (0,\infty]^n$. Then,
\[
\|\chi_Q\|_{ \mathcal{M}^p_{\vec{q}}(\mathbb{R}^n)}
=|Q|^{\frac{1}{p}}.
\]
To check this, put $\sum_{j=1}^n\frac{1}{q_j}=\bar{q}$. 
First, using Example \ref{ex 171108-1}, we get
\begin{align*}
\|\chi_Q\|_{ \mathcal{M}^p_{\vec{q}}(\mathbb{R}^n)}
=
\sup_{R \in \mathcal{Q}}
|R|^{\frac{1}{p}-\frac{\bar{q}}{n}}
\|\chi_{Q}\chi_{R}\|_{\vec{q}}
\ge
|Q|^{\frac{1}{p}-\frac{\bar{q}}{n}}
\|\chi_{Q}\|_{\vec{q}}
=
|Q|^{\frac{1}{p}-\frac{\bar{q}}{n}}
|Q|^{\frac{\bar{q}}{n}}
=
|Q|^{\frac{1}{p}}.\\
\end{align*}
On the other hand, by Proposition \ref{prop 180521-1},
\[
\|\chi_Q\|_{ \mathcal{M}^p_{\vec{q}}(\mathbb{R}^n)}
\le
\|\chi_Q\|_{ \mathcal{M}^p_{\max(q_1,\ldots,q_n)}(\mathbb{R}^n)} 
=|Q|^\frac1p. 
\]
 
Combining the above two inequalities, we obtain 
\[
\|\chi_Q\|_{ \mathcal{M}^p_{\vec{q}}(\mathbb{R}^n)}
=|Q|^{\frac{1}{p}}.
\]
\end{example}

\section{Proof of Theorems \ref{thm 171206-1} and \ref{thm 180121-1}} 
\label{sec iterated}

In this section,  we investigate the boundedness of the iterated maximal operator in 
$L^{\vec{p}}({\mathbb R}^n)$ and $\mathcal{M}^p_{\vec{q}}(\mathbb{R}^n)$.
First, to show Theorem \ref{thm 171206-1}, we need a lemma due to Bagby in 1975 \cite{Bagby}. 

 
\begin{lemma}{\rm (\cite{Bagby})} \label{lem 180113-4}
Let $1<q_i<\infty (i=1, \ldots,m)$ and $1<p<\infty$.
Let $(\Omega_i, \mu_i) (i=1, \ldots, m)$ be $\sigma$-finite measure spaces, 
and let $\Omega=\Omega_1 \times \cdots \times \Omega_m$.
For $f \in L^0(\mathbb{R}^n\times \Omega)$,
\[
\int_{\mathbb{R}^n}\left\|Mf(x,\cdot)\right\|_{(q_1,\ldots,q_m)}^p{\rm d}x
\lesssim
\int_{\mathbb{R}^n}\left\|f(x,\cdot)\right\|_{(q_1,\ldots,q_m)}^p{\rm d}x,
\]

\end{lemma}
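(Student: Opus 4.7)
The plan is to prove, by induction on $m$, the slightly stronger \emph{weighted} statement: for every $1<p<\infty$ and every $W\in A_p(\mathbb{R}^n)$,
\[
\int_{\mathbb{R}^n}\bigl\|Mf(x,\cdot)\bigr\|_{(q_1,\ldots,q_m)}^{p}W(x)\,{\rm d}x
\;\lesssim_{[W]_{A_p}}
\int_{\mathbb{R}^n}\bigl\|f(x,\cdot)\bigr\|_{(q_1,\ldots,q_m)}^{p}W(x)\,{\rm d}x.
\]
Setting $W\equiv 1$ will recover Lemma \ref{lem 180113-4}. The strategy is the same in both the base case and the inductive step: establish the estimate at one exponent $p_0$ chosen so that Fubini applies cleanly, and then upgrade to all $1<p<\infty$ by Rubio de Francia extrapolation (Proposition \ref{thm:161125-1}).

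For the base case $m=1$, take $p_0=q_1$ and $W\in A_{q_1}$. By Fubini,
\[
\int_{\mathbb{R}^n}\|Mf(x,\cdot)\|_{q_1}^{q_1}W(x)\,{\rm d}x
=\int_{\Omega_1}\!\int_{\mathbb{R}^n}|Mf(x,\omega)|^{q_1}W(x)\,{\rm d}x\,{\rm d}\mu_1(\omega),
\]
and the classical scalar weighted maximal inequality of Muckenhoupt, applied to each slice $f(\cdot,\omega)$, bounds this by its analogue with $f$ in place of $Mf$. Thus the weighted inequality holds at the single exponent $p_0=q_1$, and Proposition \ref{thm:161125-1} extrapolates it to all $p\in(1,\infty)$.

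For the inductive step, assume the weighted statement at level $m-1$ for all $p\in(1,\infty)$ and all $W\in A_p$; take $p_0=q_m$ and $W\in A_{q_m}$. Since the mixed norm is defined iteratively and $M$ acts only in the $x$ variable, Fubini gives
\[
\int_{\mathbb{R}^n}\|Mf(x,\cdot)\|_{(q_1,\ldots,q_m)}^{q_m}W(x)\,{\rm d}x
=\int_{\Omega_m}\!\int_{\mathbb{R}^n}\bigl\|Mf(x,\cdot,\omega_m)\bigr\|_{(q_1,\ldots,q_{m-1})}^{q_m}W(x)\,{\rm d}x\,{\rm d}\mu_m(\omega_m).
\]
The inductive hypothesis, applied with exponent $q_m$ and weight $W$ to each inner integral, bounds it by the same expression with $f$ in place of $Mf$; reversing Fubini produces the weighted estimate at $p_0=q_m$ for the level-$m$ mixed norm. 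Proposition \ref{thm:161125-1} then extrapolates this single-exponent bound to all $p\in(1,\infty)$, closing the induction. The main point to keep track of is that the constant produced at each inductive step enters the next as an increasing function of the relevant Muckenhoupt characteristic, which is exactly the form both required and preserved by Proposition \ref{thm:161125-1}.
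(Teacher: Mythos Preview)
The paper does not prove this lemma; it is quoted from Bagby \cite{Bagby} and used as a black box. Your proof is correct: the weighted strengthening, the Fubini collapse at the matching exponent $p_0=q_m$, and the extrapolation via Proposition~\ref{thm:161125-1} all go through as you describe, and your remark about the constants depending increasingly on the Muckenhoupt characteristic is exactly what the extrapolation needs.

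It is worth noting that your argument is essentially the one the paper itself gives later for Lemma~\ref{lem 180115-1}, the discrete (sequence-space) weighted analogue: that proof also inducts on $m$, uses Fubini to peel off the outermost index at $p=q_m$, applies the inductive hypothesis at that exponent, and then extrapolates to all $p$. So you have in effect rediscovered the paper's own technique and applied it to the continuous-$\Omega$ setting that the paper delegates to Bagby. Bagby's original 1975 proof is necessarily different, since it predates Rubio de Francia extrapolation; your route is arguably cleaner and has the advantage of yielding the weighted version directly.
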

Let us show Theorem \ref{thm 171206-1}.

\begin{proof}
Since
\begin{align*}\label{eq 171103-2}
\|{\mathcal M}_tf\|_{\vec{p}} 
&=\left\|
\left(M_n \cdots M_1 \left[|f|^t\right]\right)^{\frac{1}{t}}
\right\|_{\vec{p}}
=
\left\|
M_n \cdots M_1\left[|f|^t\right]
\right\|_{(\frac{p_1}{t}, \ldots, \frac{p_n}{t})}^{\frac{1}{t}},
\end{align*}
we have only to check (\ref{eq 171028-2}) for $t=1$ and $1<\vec{p}<\infty$.  \nonumber\\
Let $t=1$. Then the conclution can be written as
\[
\|{\mathcal M}_1f\|_{\vec{p}}=\|M_n \cdots M_1 f\|_{\vec{p}} \lesssim \|f\|_{\vec{p}}.
\]
We use induction on $n$.
Let $n=1$. Then, the result follows 
by the classical case of the boundedness of the Hardy--Littlewood maximal operator.\\
Suppose that the result holds for $n-1$, that is, for $h \in L^0({\mathbb R}^{n-1})$ and $1<(q_1, \ldots, q_{n-1})<\infty$,
\[
\|M_{n-1} \cdots M_{1} h\|_{(q_1, \ldots, q_{n-1})} \lesssim \|h\|_{(q_1, \ldots, q_{n-1})}.
\]
By Lemma \ref{lem 180113-4},
\begin{equation}\label{eq 180114-3}
\|M_n f\|_{\vec{p}}
=
\left\|\left[\left\|M_nf\right\|_{(p_1,\ldots,p_{n-1})}\right]\right\|_{(p_n)} 
\lesssim
\left\|\left[\left\|f\right\|_{(p_1,\ldots,p_{n-1})}\right]\right\|_{(p_n)} 
=
\|f\|_{\vec{p}}.
\end{equation}
Thus, by induction assumption, we obtain
\begin{align*}
\|M_nM_{n-1}\cdots M_1 f\|_{\vec{p}}
&=
\|M_n[M_{n-1}\cdots M_1 f]\|_{\vec{p}}\\
&\lesssim
\|M_{n-1}\cdots M_1 f\|_{\vec{p}}\\
&=
\left\|\left\|M_{n-1}\cdots M_1 f\right\|_{(p_1,\ldots,p_{n-1})}\right\|_{p_n}\\
&\lesssim
\left\|\left\|f\right\|_{(p_1,\ldots,p_{n-1})}\right\|_{p_n}
\lesssim
\|f\|_{\vec{p}}.
\end{align*}

\end{proof}

\begin{remark}
In 1935, Jessen, Marcinkiewicz and Zygmund showed the boundedness of the iterated maximal operator 
in the classical $L^p$ spaces \cite{J-M-Z}.
Also, Bagby showed the boundedness of the Hardy--Littlewood maximal operator for the functions taking values
in mixed Lebesgue spaces in 1975 \cite{Bagby}.
St\"ockert showed the boundedness of the iterated maximal operator $\mathcal{M}_1$ in 1978 \cite{St}.
But, the proof is not correct.
Its proof uses the following estimate:
\begin{equation}\label{eq 180114-1}
\|M_kf\|_{(q_j)}\le M_k\|f\|_{(q_j)}
\end{equation}
for $f \in L^0(\mathbb{R}^n)$ and $1<\vec{q}<\infty$.
We disprove this estimate by an example:
For the sake of simplicity, let $n=2, k=2$ and $j=1$. That is, (\ref{eq 180114-1}) implies
\[
\|M_2f\|_{(q_1)}\le M_2\|f\|_{(q_1)}.
\]
Let $0<q_2<1< q_1$ and $q_1q_2>1$. 
Define the function $\varphi$ as follows:
\[
\varphi(t)=t^{-\frac{q_1}{q_2}}\chi_{(0, 1)}(t) \quad (t \in \mathbb{R}).
\]
Let
\[
f(x,y)=\chi_{E}(x,y),
\]
where $E=\{(x, y): 0\le x \le \varphi(y)\}$.
Let $0 < y \le 1$.
First, we calculate $M_2\|f\|_{(q_1)}$.
Since
\[
\|f\|_{(q_1)}
=\left(\int_{\mathbb{R}}\chi_E(x,y) {\rm d} x \right)^{\frac{1}{q_1}}
=\varphi(y)^{\frac{1}{q_1}}
=y^{-\frac{1}{q_1q_2}}\chi_{(0,1)}(y),
\]
we get
\[
M_2\|f\|_{(q_1)}(y)
=\frac{1}{y}\int_0^y t^{-\frac{1}{q_1q_2}} {\rm d}t
=\frac1y\frac{q_1q_2}{q_1q_2-1}y^{1-\frac{1}{q_1q_2}}
= \frac{q_1q_2}{q_1q_2-1}y^{-\frac{1}{q_1q_2}}.
\]
Next, we calculate $M_2f(x,y)$.
Since
\[
E=\{(x, y): 0\le x \le \varphi(y)\}=\{(x, y): 0\le x,  0\le y\le \min(1, x^{-\frac{q_2}{q_1}})\},
\]
we have
\begin{align*}
M_2f(x,y)=\frac1y\int_0^y\chi_E(x,t) {\rm d}t
=\frac1y\int_0^{\min(y, x^{-\frac{q_2}{q_1}})}\chi_{(x\ge0)}(x) {\rm d}t
=\frac{\min(y, x^{-\frac{q_2}{q_1}})\chi_{(x\ge0)}(x)}{y}.
\end{align*}
Thus,
\begin{align*}
\|M_2f(\cdot,y)\|_{(q_1)}^{q_1}
&=\frac1y\int_{\mathbb{R}} \left[\min(y, x^{-\frac{q_2}{q_1}})\right]^{q_1} {\rm d}x\\
&= \frac1y\int_{\mathbb{R}}\left[ y\chi_{\left(0\le x\le y^{-\frac{q_1}{q_2}}\right)}(x)+
x^{-\frac{q_2}{q_1}} \chi_{\left(x\ge y^{-\frac{q_1}{q_2}}\right)}(x)\right]^{q_1}{\rm d}x\\
&\ge\frac1y\int_{y^{-\frac{q_1}{q_2}}}^\infty x^{-q_2} {\rm d}x=\infty.
\end{align*}
Therefore, (\ref{eq 180114-1}) does not hold.
On the other hand, using Lemma \ref{lem 180113-4}, we give a correct proof for Theorem \ref{thm 171206-1}.
\end{remark}

Moreover, we shall consider why we investigate the iterated maximal operator.
\begin{example}
Let $\mathcal{R}$ be a set of all rectangles in $\mathbb{R}^n$. 
By $M_R$, denote the strong maximal operator which is generated by a rectangle $R$:
for $f \in L^0(\mathbb{R}^n)$,
\[
M_Rf(x)
=\sup_{R \in \mathcal{R}}
\frac{\chi_R(x)}{|R|}
\int_{R}
|f(y)|
{\rm d}y.
\]
Then, the followings follow \cite{J-M-Z}:
\[
M_Rf(x)\le M_n \cdots M_1f (x)={\mathcal M}_1f(x),
\]
and
\[
M_Rf(x)\le M_1 \cdots M_nf (x),
\]
and so on.
Thus, the iterated maximal operator can controll the strong maximal operator.
On the other hand, the relation between $M_1\cdots M_n$ and $M_n \cdots M_1$ is not comparable. 
To see this, we give the following example.
For the sake of simplicity, let $n=2$.
Let $f(x,y)=\chi_{\Delta}(x,y)$, where 
\[
\Delta=\{(x,y): 0\le x\le1, 0\le y\le x\}.  
\]
First, we calculate $M_1f$ and $M_2f$:
\begin{eqnarray*}
M_1f(x,y)=
\left\{\begin{array}{ll}
0 & (y\le0, 1\le y),\\
1 & (0\le y\le1, y\le x),\\
\frac{1-y}{1-x} & (0 \le y \le1, x\le y),\\
\frac{1-y}{x-y} & (0\le y \le1, 1\le x),\\
\end{array} \right.
\end{eqnarray*}
and
\begin{eqnarray*}
M_2f(x,y)=
\left\{\begin{array}{ll}
0 & (x\le0, 1\le x),\\
\frac{x}{x-y} & (0\le x\le1, y\le0),\\
1 & (0 \le x \le1, 0\le y\le x),\\
\frac{x}{y} & (0\le x \le1, x\le y).\\
\end{array} \right.
\end{eqnarray*}
Next, we calculate $M_2M_1f$ and $M_1M_2f$.
In particular, we consider two cases. For $0\le x \le 1, y\ge1$, we get 
\[
M_2M_1f(x,y)=\frac{-x^2-y^2+2y}{2y(1-x)}, \quad M_1M_2f(x,y)=\frac{x+1}{2y}.
\]
For $x \ge 1, 0\le y\le1$, we have
\[
M_2M_1f(x,y)=\frac{1}{y}\left(y+(x-1)\log\frac{x-1}{x}\right), \quad M_1M_2f(x,y)=\frac{x-\sqrt{x^2-1}}{y}.
\]
Thus, we obtain
\[
M_2M_1f \le M_1M_2f \quad (0\le x \le 1, y\ge1),
\]
while
\[
M_2M_1f \ge M_1M_2f \quad (x\ge1, 0\le y\le1).
\]

\end{example}

Next, we consider the boundedness of the maximal operator in classical and mixed Morrey spaces.
The following proposition is important when we show the boundedness of the Hardy--Littlewood maximal operator
in classical and mixed Morrey spaces.

\begin{proposition}\label{ex 171103-1} {\rm (\cite{SHG15} Lemma 4.2)}
For all measurable functions $f$ and cubes $Q$,
we have
\begin{equation}\label{eq:131109-69}
M[\chi_{{\mathbb R}^n \setminus 5Q}f](y)
\lesssim \sup_{Q \subset R \in {\mathcal Q}}
\frac{1}{|R|}\int_R |f(x)| {\rm d}x
\quad (y \in Q).
\end{equation}
\end{proposition}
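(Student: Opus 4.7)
The plan is to observe that, for $y \in Q$, the cubes $R' \ni y$ that occur in the definition of $M[\chi_{{\mathbb R}^n \setminus 5Q} f](y)$ split naturally into two families. Those with $R' \subset 5Q$ contribute nothing because $\chi_{{\mathbb R}^n \setminus 5Q}$ vanishes on $R'$, while those meeting ${\mathbb R}^n \setminus 5Q$ are forced by the factor $5$ to be comparable in size to, or larger than, $Q$. For each cube in the second family, I intend to pass to a slight enlargement $R$ that contains $Q$ and whose volume is only a dimensional constant times $|R'|$, thereby reducing the average over $R'$ to an average over a cube containing $Q$.

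To carry out the reduction, fix $y \in Q = Q(x_0, r)$ (so $\ell(Q) = 2r$ and $5Q = Q(x_0, 5r)$) and take any cube $R' \ni y$ meeting ${\mathbb R}^n \setminus 5Q$. Picking $z \in R' \setminus 5Q$, the $\ell^\infty$ triangle inequality yields
\[
\ell(R') \ge |y - z|_\infty \ge |z - x_0|_\infty - |y - x_0|_\infty > 5r - r = 2\,\ell(Q),
\]
so $R'$ is strictly larger than $Q$. I then take $R$ to be the cube centered at $x_0$ with side length $2(\ell(R') + \ell(Q)) \le 3\,\ell(R')$. For any $w \in Q$, $|w - x_0|_\infty \le r \le \ell(R')+\ell(Q)$, and for any $x \in R'$, $|x - x_0|_\infty \le |x-y|_\infty + |y-x_0|_\infty \le \ell(R')+r \le \ell(R')+\ell(Q)$; hence $R \supset R' \cup Q$, in particular $R \supset Q$, while $|R| \le 3^n |R'|$.

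The conclusion is then immediate:
\[
\frac{1}{|R'|} \int_{R'} \chi_{{\mathbb R}^n \setminus 5Q}(x) |f(x)| \, {\rm d}x
\le \frac{|R|}{|R'|} \cdot \frac{1}{|R|} \int_R |f(x)| \, {\rm d}x
\lesssim \sup_{Q \subset R \in \mathcal{Q}} \frac{1}{|R|} \int_R |f(x)| \, {\rm d}x,
\]
and taking the supremum over all admissible $R' \ni y$ gives \eqref{eq:131109-69}. The only delicate point is producing the enlargement $R$ with the correct comparison of volumes; the factor $5$ in ``$5Q$'' is precisely what leaves enough slack to execute this with a universal constant, and the argument has no serious obstacle beyond this geometric bookkeeping.
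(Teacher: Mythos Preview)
Your proof is correct; the geometric enlargement argument is precisely the standard way to establish this estimate, and all the inequalities check out (in particular $5r-r=4r=2\ell(Q)$ and the containment $R\supset R'\cup Q$ with $|R|\le 3^n|R'|$). Note that the paper itself does not supply a proof of this proposition but simply quotes it from \cite{SHG15}, Lemma~4.2, so there is no in-paper argument to compare against; your write-up is exactly the kind of elementary self-contained verification one would expect for this lemma.
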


First, we prove the boundedness of the Hardy--Littlewood maximal operator in mixed Morrey spaces.
The boundedness of the Hardy--Littlewood maximal operator in classical Morrey spaces is showed by 
Chiarenza and Frasca in 1987 \cite{C-F}.

\begin{theorem}\label{thm 180122-1}
Let $1<\vec{q}<\infty$ and $1<p\le\infty$ satisfy $\frac np \le\sum_{j=1}^n\frac1q_j$. Then
\[
\|Mf\|_{\mathcal{M}^p_{\vec{q}}(\mathbb{R}^n)} \lesssim \|f\|_{\mathcal{M}^p_{\vec{q}}(\mathbb{R}^n)}
\]
for all $f \in L^0(\mathbb{R}^n)$.
\end{theorem}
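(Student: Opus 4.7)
The plan is to adapt the classical Chiarenza--Frasca decomposition to the mixed setting. For a fixed cube $Q$, I would split $f = f\chi_{5Q} + f\chi_{\mathbb{R}^n\setminus 5Q}$ and bound the two resulting pieces of $Mf$ separately on $Q$.

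For the local part, the key input is the $L^{\vec{q}}$-boundedness of the Hardy--Littlewood maximal operator $M$. This is a consequence of Theorem \ref{thm 171206-1}: writing a cube $R = I_1\times\cdots\times I_n$, the average $\frac{1}{|R|}\int_R |f|\,{\rm d}y$ factors as an iteration of one-dimensional averages, so $Mf(x) \le M_n\cdots M_1 f(x) = \mathcal{M}_1 f(x)$, and Theorem \ref{thm 171206-1} with $t=1$ yields $\|Mf\|_{\vec{q}} \lesssim \|f\|_{\vec{q}}$. Applying this to $f\chi_{5Q}$ and using $|5Q|\sim |Q|$ gives
\[
|Q|^{\frac{1}{p}-\frac{1}{n}\sum_j\frac{1}{q_j}}\|M(f\chi_{5Q})\chi_Q\|_{\vec{q}}
\lesssim
|5Q|^{\frac{1}{p}-\frac{1}{n}\sum_j\frac{1}{q_j}}\|f\chi_{5Q}\|_{\vec{q}}
\le \|f\|_{\mathcal{M}^p_{\vec{q}}(\mathbb{R}^n)}.
\]

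For the non-local part, I would apply Proposition \ref{ex 171103-1} to obtain the pointwise estimate $M(f\chi_{\mathbb{R}^n\setminus 5Q})(y) \lesssim \sup_{R\supset Q}\frac{1}{|R|}\int_R|f|\,{\rm d}x$ for $y\in Q$. To convert this average into the mixed Morrey norm, I apply H\"older's inequality in the mixed Lebesgue setting together with Example \ref{ex 171108-1}:
\[
\frac{1}{|R|}\int_R|f|\,{\rm d}x
\le \frac{1}{|R|}\|f\chi_R\|_{\vec{q}}\|\chi_R\|_{\vec{q'}}
= |R|^{-\frac{1}{n}\sum_j\frac{1}{q_j}}\|f\chi_R\|_{\vec{q}}
\le |R|^{-\frac{1}{p}}\|f\|_{\mathcal{M}^p_{\vec{q}}(\mathbb{R}^n)}.
\]
Since $R\supset Q$ forces $|R|\ge|Q|$ and $1/p>0$, the supremum over such $R$ is $\lesssim |Q|^{-\frac{1}{p}}\|f\|_{\mathcal{M}^p_{\vec{q}}(\mathbb{R}^n)}$. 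Multiplying by $\chi_Q$ and taking the mixed norm, together with $\|\chi_Q\|_{\vec{q}} = |Q|^{\frac{1}{n}\sum_j\frac{1}{q_j}}$, yields
\[
|Q|^{\frac{1}{p}-\frac{1}{n}\sum_j\frac{1}{q_j}}\|M(f\chi_{\mathbb{R}^n\setminus 5Q})\chi_Q\|_{\vec{q}}
\lesssim \|f\|_{\mathcal{M}^p_{\vec{q}}(\mathbb{R}^n)}.
\]

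Summing the two contributions and taking the supremum over all cubes $Q$ gives the desired inequality. The main obstacle is the $L^{\vec{q}}$-boundedness of $M$, which is not a triviality in the mixed setting but is supplied by Theorem \ref{thm 171206-1}; once that is in place, the rest is the standard Chiarenza--Frasca dichotomy combined with the mixed-norm H\"older inequality and the explicit formula $\|\chi_Q\|_{\vec{q}} = |Q|^{\frac{1}{n}\sum_j\frac{1}{q_j}}$. The assumption $\frac{n}{p}\le\sum_j\frac{1}{q_j}$ is exactly what makes the factor $|Q|^{\frac{1}{p}-\frac{1}{n}\sum_j\frac{1}{q_j}}$ non-increasing in $|Q|$, which is needed to control the supremum over $R\supset Q$ in the non-local term.
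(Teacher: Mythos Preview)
Your proof is correct and follows essentially the same Chiarenza--Frasca decomposition as the paper: split into a local piece handled by the $L^{\vec{q}}$-boundedness of $M$, and a non-local piece handled by Proposition~\ref{ex 171103-1} together with Example~\ref{ex 171108-1}. The only cosmetic differences are that the paper cites Bagby's result directly for $\|Mf\|_{\vec{q}}\lesssim\|f\|_{\vec{q}}$ rather than going through $Mf\le\mathcal{M}_1f$ and Theorem~\ref{thm 171206-1}, and that the paper phrases the non-local estimate via the embedding $\mathcal{M}^p_{\vec{q}}\hookrightarrow\mathcal{M}^p_1$ (Proposition~\ref{prop 180521-1}) rather than via a direct mixed H\"older inequality---but these are equivalent packagings of the same argument.
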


\begin{proof}
It suffices to verify that, for any cube $Q=Q(x,r)$,
\[
|Q|^{\frac{1}{p}-\frac{1}{n}
\left(
\sum_{j=1}^n\frac{1}{q_j}
\right)}
\|(Mf)\chi_Q\|_{\vec{q}}
\lesssim \|f\|_{\mathcal{M}^p_{\vec{q}}(\mathbb{R}^n)}.
\]
Now, we decompose
\[
|f(y)|
=
\chi_{Q(x, 5r)}(y)|f(y)|+\chi_{Q(x, 5r)^c}(y)|f(y)|
\equiv
f_1(y)+f_2(y) \quad(y \in \mathbb{R}^n).
\]
Using the subadditivity of $M$, we obtain
\[
Mf(y) \le Mf_1(y)+Mf_2(y) \quad(y \in \mathbb{R}^n).
\]
First, the boundedness of $M$ on the mixed Lebesgue space $L^{\vec{q}}(\mathbb{R}^n)$ \cite{Bagby}
yields
\begin{align*}
|Q|^{\frac{1}{p}-\frac{1}{n}
\left(
\sum_{j=1}^n\frac{1}{q_j}
\right)}
\|(Mf_1)\chi_Q\|_{\vec{q}}
&\le
|Q|^{\frac{1}{p}-\frac{1}{n}
\left(
\sum_{j=1}^n\frac{1}{q_j}
\right)}
\|Mf_1\|_{\vec{q}}\\
&\lesssim
|Q|^{\frac{1}{p}-\frac{1}{n}
\left(
\sum_{j=1}^n\frac{1}{q_j}
\right)}
\|f_1\|_{\vec{q}}\\
&=
|Q|^{\frac{1}{p}-\frac{1}{n}
\left(
\sum_{j=1}^n\frac{1}{q_j}
\right)}
\|f\chi_{Q(x, 5r)}\|_{\vec{q}}\\
&=
|Q(x, 5r)|^{\frac{1}{p}-\frac{1}{n}
\left(
\sum_{j=1}^n\frac{1}{q_j}
\right)}
\|f\chi_{Q(x, 5r)}\|_{\vec{q}}\\
&\le
\|f\|_{\mathcal{M}^p_{\vec{q}}(\mathbb{R}^n)}.
\end{align*}
Second, by Proposition \ref{ex 171103-1}, we get
\[
Mf_2(y)
=M[\chi_{{\mathbb R}^n \setminus 5Q}f](y)
\lesssim \sup_{Q \subset R \in {\mathcal Q}}
\frac{1}{|R|}\int_R |f(x)| {\rm d}x
\quad (y \in Q).
\]
Thus, we see that
\begin{align} \label{eq 180517-1}
&|Q|^{\frac{1}{p}-
\frac{1}{n}\left(\sum_{j=1}^n\frac{1}{q_j}\right)}
\|(M f_2)\chi_Q\|_{\vec{q}} \nonumber\\
&\lesssim
\sup_{Q \subset R \in {\mathcal Q}}
|Q|^{\frac{1}{p}-
\frac{1}{n}\left(\sum_{j=1}^n\frac{1}{q_j}\right)}
\left\|
\frac{1}{|R|}\int_R |f(x)| {\rm d}x
\times\chi_Q\right\|_{\vec{q}}.
\end{align}
Thanks to Example \ref{ex 171108-1}, we have
\begin{align*}
(\ref{eq 180517-1})&=
\sup_{Q\subset R \in {\mathcal Q}}
|Q|^{\frac{1}{p}-
\frac{1}{n}\left(\sum_{j=1}^n\frac{1}{q_j}\right)}
\frac{1}{|R|}\int_R |f(x)| {\rm d}x
\times\|\chi_Q\|_{\vec{q}}\\
&=
\sup_{Q\subset R \in {\mathcal Q}}
|Q|^{\frac{1}{p}-
\frac{1}{n}\left(\sum_{j=1}^n\frac{1}{q_j}\right)}
\frac{1}{|R|}\int_R |f(x)| {\rm d}x
\times
|Q|^{\frac{1}{n}\left(\sum_{j=1}^n\frac{1}{q_j}\right)}\\
&\le
\sup_{R \in {\mathcal Q}}|R|^{\frac{1}{p}-1}\int_R|f(x)|{\rm d}x.
\end{align*}
By Proposition \ref{prop 180521-1}, taking into account
${\mathcal M}^p_{\vec{q}}({\mathbb R}^n) 
\hookrightarrow {\mathcal M}^p_{(\underbrace{1, \ldots, 1}_{\mbox{$n$ times}})}({\mathbb R}^n)
= {\mathcal M}^p_1({\mathbb R}^n)$
with embedding constant $1$,
we get
\begin{align*}
|Q|^{\frac{1}{p}-
\frac{1}{n}\left(\sum_{j=1}^n\frac{1}{q_j}\right)}
\|(M f_2)\chi_Q\|_{\vec{q}}
\le
\|f \|_{{\mathcal M}^p_1(\mathbb{R}^n)}
\le
\|f \|_{{\mathcal M}^p_{\vec{q}}(\mathbb{R}^n)}.
\end{align*}
Thus, taking the supremum over all the cubes, we obtain
\[
\|Mf_2 \|_{{\mathcal M}^p_{\vec{q}}(\mathbb{R}^n)}
\lesssim
\|f \|_{{\mathcal M}^p_{\vec{q}}(\mathbb{R}^n)}.
\]
Hence, the result holds.
\end{proof}

Next, we show the boundedness of the iterated maximal operator for mixed Morrey spaces.
To show this, we need auxiliary estimates.

\begin{lemma} \label{lem 180115-1}
Let $\{f_{(j_1, \ldots, j_m)}\}_{j_1, \ldots, j_m=0}^\infty \subset L^0(\mathbb{R}^n)$ and $w \in A_p$.
Then, for $1<q_i \le \infty (i=1, \ldots,m)$ and $1<p<\infty$, 
\begin{equation*}
\left\|\left[\left\|Mf_{(j_1,\ldots,j_m)}\right\|_{\ell^{(q_1,\ldots,q_m)}}w^\frac{1}{p}\right]\right\|_p
\lesssim
\left\|\left[\left\|f_{(j_1,\ldots,j_m)}\right\|_{\ell^{(q_1,\ldots,q_m)}}w^\frac{1}{p}\right]\right\|_p,
\end{equation*}
that is, 
\begin{eqnarray*}
&&
\left\|
\left(
\sum_{j_m=1}^\infty
\left(\cdots
\sum_{j_2=1}^\infty
\left(
\sum_{j_1=1}^\infty
(M f_{(j_1,\ldots,j_m)})^{q_1}
\right)^{\frac{q_2}{q_1}}
\cdots
\right)^{\frac{q_m}{q_{m-1}}}
\right)^{\frac{1}{q_m}}w^\frac{1}{p}\right\|_p\\
&&\lesssim
\left\|\left(
\sum_{j_m=1}^\infty
\left(\cdots
\sum_{j_2=1}^\infty
\left(
\sum_{j_1=1}^\infty
|f_{(j_1,\ldots,j_m)}|^{q_1}
\right)^{\frac{q_2}{q_1}}
\cdots
\right)^{\frac{q_m}{q_{m-1}}}
\right)^{\frac{1}{q_m}}w^\frac{1}{p}\right\|_p.
\end{eqnarray*}
 
\end{lemma}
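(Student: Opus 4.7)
The plan is to prove the lemma by induction on $m$, combining the classical weighted Fefferman--Stein vector-valued maximal inequality as the base case with the extrapolation theorem (Proposition~\ref{thm:161125-1}) to bridge across exponents. For concreteness I shall first handle the case where all $q_i$ are finite; coordinates with $q_i=\infty$ will be addressed at the end by a pointwise monotonicity argument based on $\sup_k Mf_{\ldots,k,\ldots}\le M(\sup_k|f_{\ldots,k,\ldots}|)$.

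The base case $m=1$ is the weighted vector-valued Fefferman--Stein inequality for $A_p$ weights,
\[
\Big\|\big({\textstyle\sum_{j_1}}|Mf_{j_1}|^{q_1}\big)^{1/q_1}w^{1/p}\Big\|_p \lesssim \Big\|\big({\textstyle\sum_{j_1}}|f_{j_1}|^{q_1}\big)^{1/q_1}w^{1/p}\Big\|_p,
\]
which is classical and holds for all $1<p,q_1<\infty$ and $w\in A_p$.

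For the inductive step, I assume the result for $(m-1)$-indexed sequences, for every $1<p<\infty$ and every $w\in A_p$. I will first verify the weighted inequality at the single exponent $p_0=q_m$ and then invoke Proposition~\ref{thm:161125-1} to promote it to all $p$. The verification at $p_0=q_m$ uses the recursive definition of the mixed $\ell$-norm to unpack the outermost layer via Fubini: for $w\in A_{q_m}$,
\[
\int_{\mathbb{R}^n}\|Mf_{(j_1,\dots,j_m)}\|_{\ell^{(q_1,\dots,q_m)}}^{q_m}w\,dx
=\sum_{j_m}\int_{\mathbb{R}^n}\|Mf_{(j_1,\dots,j_m)}\|_{\ell^{(q_1,\dots,q_{m-1})}}^{q_m}w\,dx,
\]
and for each fixed $j_m$ the inductive hypothesis, applied to the $(m-1)$-indexed sequence $\{f_{(j_1,\dots,j_m)}\}_{j_1,\dots,j_{m-1}}$ at exponent $p=q_m$ and weight $w\in A_{q_m}$, controls the integrand by a constant multiple (depending only on $[w]_{A_{q_m}}$) of $\|f_{(j_1,\dots,j_m)}\|_{\ell^{(q_1,\dots,q_{m-1})}}^{q_m}w$. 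Resumming in $j_m$ restores the full $\ell^{(q_1,\dots,q_m)}$-norm. Applying Proposition~\ref{thm:161125-1} to the family
\[
\mathcal{F}=\big\{\big(\|Mf\|_{\ell^{(q_1,\dots,q_m)}},\,\|f\|_{\ell^{(q_1,\dots,q_m)}}\big):\{f_{(j_1,\dots,j_m)}\}\subset L^0(\mathbb{R}^n)\big\}
\]
then yields the lemma for all $1<p<\infty$ and all $w\in A_p$.

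The main obstacle I anticipate is the bookkeeping of constants: to legitimately invoke Proposition~\ref{thm:161125-1} in the inductive step, the inductive-hypothesis constant at $p_0=q_m$ must depend only on $[w]_{A_{q_m}}$ (and the fixed parameters $q_1,\dots,q_{m-1}$), not on the sequence $f$, so I will need to verify that this dependence propagates through the induction. The remaining wrinkle is the case where some $q_i=\infty$: Fubini is unavailable at such a layer, and one instead peels off the $\ell^{\infty}$-coordinate using the pointwise bound $\sup_k Mf_{\ldots,k,\ldots}\le M(\sup_k|f_{\ldots,k,\ldots}|)$ in combination with the monotonicity of mixed $\ell$-norms before invoking the inductive hypothesis on the remaining finite-exponent layers.
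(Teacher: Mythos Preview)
Your proposal is correct and follows essentially the same approach as the paper: induct on $m$ with the weighted Fefferman--Stein inequality as base case, verify the inductive step at the single exponent $p_0=q_m$ by unpacking the outermost $\ell^{q_m}$-layer and applying the inductive hypothesis term by term, then extrapolate via Proposition~\ref{thm:161125-1} to reach all $1<p<\infty$. Your additional care with the $q_i=\infty$ case and with tracking the $[w]_{A_{p_0}}$-dependence of the constant (needed to invoke extrapolation) are welcome refinements that the paper's proof leaves implicit.
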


\begin{proof}
We induct on $m$.
Let $m=1$. Then, this is the weighted Fefferman--Stein maximal inequality \cite{A-J}.
Suppose that the result holds for $m-1$:
\begin{eqnarray*}
\left\|\left[\left\|Mf_{(j_1,\ldots,j_{m-1})}\right\|_{\ell^{(q_1,\ldots,q_{m-1})}}w^\frac{1}{p}\right]\right\|_p
\lesssim
\left\|\left[\left\|f_{(j_1,\ldots,j_{m-1})}\right\|_{\ell^{(q_1,\ldots,q_{m-1})}}w^\frac{1}{p}\right]\right\|_p.
\end{eqnarray*}
Let $p=q_m$. We calculate
\begin{align*}
\left\|\left[\left\|Mf_{(j_1,\ldots,j_m)}\right\|_{\ell^{(q_1,\ldots,q_m)}}w^\frac{1}{p}\right]\right\|_p^p
&=
\left\|\left[\left\|Mf_{(j_1,\ldots,j_m)}\right\|_{\ell^{(q_1,\ldots,q_{m-1},1)}}^pw\right]\right\|_1.
\end{align*}
By the Lebesgue convergence theorem, 
\begin{align*}
\left\|\left[\left\|Mf_{(j_1,\ldots,j_m)}\right\|_{\ell^{(q_1,\ldots,q_m)}}w^\frac{1}{p}\right]\right\|_p^p
&=
\sum_{j_m=1}^\infty
\left\|\left[\left\|Mf_{(j_1,\ldots,j_m)}\right\|_{\ell^{(q_1,\ldots,q_{m-1})}}^pw\right]\right\|_1\\
&=
\sum_{j_m=1}^\infty
\left\|\left[\left\|Mf_{(j_1,\ldots,j_m)}\right\|_{\ell^{(q_1,\ldots,q_{m-1})}}w^\frac{1}{p}\right]\right\|_p^p.
\end{align*}
Using induction assumption, we obtain
\begin{align*}
\left\|\left[\left\|Mf_{(j_1,\ldots,j_m)}\right\|_{\ell^{(q_1,\ldots,q_m)}}w^\frac{1}{p}\right]\right\|_p^p
&\lesssim
\sum_{j_m=1}^\infty
\left\|\left[\left\|f_{(j_1,\ldots,j_m)}\right\|_{\ell^{(q_1,\ldots,q_{m-1})}}w^\frac{1}{p}\right]\right\|_p^p\\
&=
\sum_{j_m=1}^\infty
\left\|\left[\left\|f_{(j_1,\ldots,j_m)}\right\|_{\ell^{(q_1,\ldots,q_{m-1})}}^pw\right]\right\|_1\\
&=
\left\|\left[\left\|f_{(j_1,\ldots,j_m)}\right\|_{\ell^{(q_1,\ldots,q_{m-1},1)}}^pw\right]\right\|_1\\
&=
\left\|\left[\left\|f_{(j_1,\ldots,j_m)}\right\|_{\ell^{(q_1,\ldots,q_m)}}w^\frac{1}{p}\right]\right\|_p^p.
\end{align*}
Thus, the result holds when $p=q_m$.
Using Proposition \ref{thm:161125-1}, we conclude the result for all $1<p<\infty$.
\end{proof}

\begin{lemma} \label{lem 180119-2}
Let $\{f_{(j_1, \ldots, j_m)}\}_{j_1, \ldots, j_m=1}^\infty \subset L^0(\mathbb{R}^n)$ and $w_k \in A_{q_k}(\mathbb{R})$.
Then, for $1<q_i\le \infty (i=1, \ldots,m)$ and $k=1, \ldots,n$, 
\begin{align}\label{eq 180119-3}
\lefteqn{\nonumber
\left\|\left[
\left\|\left[\left\|M_kf_{(j_1,\ldots,j_m)}\right\|_{\ell^{(q_1,\ldots,q_k)}}\right]w_k(\cdot_k)^\frac{1}{q_k}\right\|_{q_k}
\right]\right\|_{\ell^{(q_{k+1},\ldots,q_m)}}}\\ 
&\lesssim
\left\|\left[
\left\|\left[\left\|f_{(j_1,\ldots,j_m)}\right\|_{\ell^{(q_1,\ldots,q_k)}}\right]w_k(\cdot_k)^\frac{1}{q_k}\right\|_{q_k}
\right]\right\|_{\ell^{(q_{k+1},\ldots,q_m)}}.
\end{align}
\end{lemma}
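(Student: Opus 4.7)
The plan is to reduce Lemma \ref{lem 180119-2} directly to Lemma \ref{lem 180115-1} in the one-dimensional setting, by treating the remaining spatial variables and the outer sequence indices as fixed parameters. The key observation is that $M_k$ acts only in the $x_k$ slot, while the inner sequence norm $\ell^{(q_1,\ldots,q_k)}$ and the weighted $L^{q_k}$ norm in $x_k$ together have exactly the structure on which Lemma \ref{lem 180115-1} operates (with $n=1$, $p=q_k$, and $m$ replaced by $k$).

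Concretely, I would first freeze $x^{*}:=(x_1,\ldots,x_{k-1},x_{k+1},\ldots,x_n)$ and the tail indices $j^{*}:=(j_{k+1},\ldots,j_m)$ and introduce the sliced family
\[
g_{(j_1,\ldots,j_k)}(t)
:=f_{(j_1,\ldots,j_k,\,j_{k+1},\ldots,j_m)}(x_1,\ldots,x_{k-1},t,x_{k+1},\ldots,x_n),
\]
so that $M_k f_{(j_1,\ldots,j_m)}$ evaluated in the $k$-th slot is precisely $Mg_{(j_1,\ldots,j_k)}$, where $M$ denotes the one-dimensional Hardy--Littlewood maximal operator. Since $w_k\in A_{q_k}(\mathbb{R})$, I would then apply Lemma \ref{lem 180115-1} with $n=1$, $p=q_k$, exponents $q_1,\ldots,q_k$, and weight $w_k$, obtaining
\[
\left\|\left[\left\|Mg_{(j_1,\ldots,j_k)}\right\|_{\ell^{(q_1,\ldots,q_k)}}\right]w_k^{1/q_k}\right\|_{q_k}
\lesssim
\left\|\left[\left\|g_{(j_1,\ldots,j_k)}\right\|_{\ell^{(q_1,\ldots,q_k)}}\right]w_k^{1/q_k}\right\|_{q_k},
\]
with an implicit constant depending only on $[w_k]_{A_{q_k}}$ and the $q_i$'s, hence uniform in both $x^{*}$ and $j^{*}$. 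This is a scalar inequality for each fixed $(x^{*},j^{*})$.

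Finally, since the mixed sequence norm $\ell^{(q_{k+1},\ldots,q_m)}$ is monotone with respect to pointwise inequalities of non-negative expressions indexed by $j^{*}$, I would apply it in the $j^{*}$-variables to both sides and arrive at \eqref{eq 180119-3}. The analogous monotonicity in the spatial variables $x^{*}$ is not even invoked, because both sides of \eqref{eq 180119-3} are, as written, functions of those variables and the inequality asserted is pointwise in them.

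The main technical point, and essentially the only one, is the uniformity of the constant in the reduction step: Lemma \ref{lem 180115-1}'s constant depends only on $[w]_{A_p}$ and the exponents, and this carries over verbatim to the sliced setup. A minor caveat concerns the limiting case $q_k=\infty$: there the $L^{q_k}$ norm collapses to an essential supremum, the factor $w_k^{1/q_k}$ degenerates, and one argues directly from $\|M g\|_\infty\le\|g\|_\infty$ (pointwise in $t$, hence in all slices) combined with monotonicity of the outer mixed sequence norm, bypassing Lemma \ref{lem 180115-1} altogether.
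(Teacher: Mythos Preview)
Your proposal is correct and follows essentially the same route as the paper: apply Lemma~\ref{lem 180115-1} in one dimension (with $p=q_k$, weight $w_k$, and inner sequence exponents $q_1,\ldots,q_k$) after freezing the remaining spatial variables and the tail indices $j_{k+1},\ldots,j_m$, then use monotonicity of the outer $\ell^{(q_{k+1},\ldots,q_m)}$-norm. Your treatment is in fact more careful than the paper's two-line proof, since you make the slicing explicit, note the uniformity of the constant, and separately handle the endpoint $q_k=\infty$ that Lemma~\ref{lem 180115-1} does not directly cover.
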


\begin{proof}
By Lemma \ref{lem 180115-1}, 
\[
\left\|\left[\left\|M_kf_{(j_1,\ldots,j_m)}\right\|_{\ell^{(q_1,\ldots,q_k)}}\right]w_k(\cdot_k)^\frac{1}{q_k}\right\|_{q_k}
\lesssim
\left\|\left[\left\|f_{(j_1,\ldots,j_m)}\right\|_{\ell^{(q_1,\ldots,q_k)}}\right]w_k(\cdot_k)^\frac{1}{q_k}\right\|_{q_k}.
\]
Taking $\ell^{(q_{k+1},\ldots,q_m)}$-norm for $j_{k+1},\ldots,j_m$, we conclude (\ref{eq 180119-3}).
\end{proof}

\begin{lemma}\label{lem 180119-4}
Let $f \in L^0(\mathbb{R}^n)$ and $w_n \in A_{p_n}(\mathbb{R})$.
Then, for $1<p_i\le \infty (i=1, \ldots,n)$, 
\begin{equation}
\left\|\left[
\left\|M_nf\right\|_{(p_1,\ldots,p_{n-1})}\right]
w_n(\cdot_n)^\frac{1}{p_n}\right\|_{(p_n)}
\lesssim
\left\|\left[
\left\|f\right\|_{(p_1,\ldots,p_{n-1})}\right]
w_n(\cdot_n)^\frac{1}{p_n}\right\|_{(p_n)}.
\end{equation}

\end{lemma}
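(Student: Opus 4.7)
The plan is to reduce Lemma~\ref{lem 180119-4} to its sequence-valued counterpart (Lemma~\ref{lem 180115-1}) by discretising $f$ in the first $n-1$ variables and then passing to the limit.

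For each $N\ge 1$, I would approximate $f$ by a step function that is constant on dyadic cubes of side $2^{-N}$ in $(x_1,\ldots,x_{n-1})$ while retaining the full dependence on $x_n$:
\[
f^{(N)}(x_1,\ldots,x_n)=\sum_{\vec{j}\in\mathbb{Z}^{n-1}}a^{(N)}_{\vec{j}}(x_n)\,\chi_{2^{-N}(\vec{j}+[0,1)^{n-1})}(x_1,\ldots,x_{n-1}),
\]
where $a^{(N)}_{\vec{j}}(x_n)$ is the cube-average of $y\mapsto f(y,x_n)$ on $2^{-N}(\vec{j}+[0,1)^{n-1})$. Since $M_n$ acts only on the $x_n$ variable, it commutes with this decomposition, so $M_n f^{(N)}$ has the same form with $a^{(N)}_{\vec{j}}$ replaced by $M_n a^{(N)}_{\vec{j}}$. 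By Example~\ref{ex 180112-1} together with the dilation relation of Proposition~\ref{prop 180113-2}, the mixed Lebesgue norms of $f^{(N)}(\cdot,x_n)$ and $M_n f^{(N)}(\cdot,x_n)$ collapse to the mixed sequence $\ell^{(p_1,\ldots,p_{n-1})}$-norms of $\{a^{(N)}_{\vec{j}}(x_n)\}$ and $\{M_n a^{(N)}_{\vec{j}}(x_n)\}$, up to a common scaling factor $2^{-N\sum_{i=1}^{n-1}1/p_i}$ that cancels in the target inequality.

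Next, I would invoke Lemma~\ref{lem 180115-1} with the ambient space taken to be $\mathbb{R}$ (the $x_n$-axis), $m=n-1$, $p=p_n$, and weight $w=w_n\in A_{p_n}(\mathbb{R})$, applied to the sequence $\{a^{(N)}_{\vec{j}}\}_{\vec{j}\in\mathbb{Z}^{n-1}}$ of one-dimensional functions of $x_n$. Because the operator $M$ in Lemma~\ref{lem 180115-1} coincides with $M_n$ when acting on sequences depending only on $x_n$, this immediately yields the desired weighted inequality for $f^{(N)}$, with an implicit constant independent of $N$.

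Finally, I would let $N\to\infty$. The Lebesgue differentiation theorem applied to the cube-averages gives $f^{(N)}(\cdot,x_n)\to f(\cdot,x_n)$ pointwise a.e., from which $M_n f^{(N)}\to M_n f$ a.e.\ can be obtained by a further Lebesgue-differentiation argument inside the one-dimensional maximal operator. Combined with Fatou's property for mixed Lebesgue norms (Proposition~\ref{prop 171225-1}) on the left, and a monotone-convergence argument on the right, this transfers the inequality from $f^{(N)}$ to $f$. The principal obstacle is this limit step: the cleanest route is first to establish the result for nonnegative, bounded, compactly supported $f$ via a monotone approximation $f^{(N)}\uparrow f$ so that both sides are controlled by the dominated/monotone convergence theorem, and then to extend to the general case by truncation and Fatou.
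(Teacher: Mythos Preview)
Your overall strategy---discretise in the first $n-1$ variables, collapse the mixed norm to a mixed sequence norm via Example~\ref{ex 180112-1} and Proposition~\ref{prop 180113-2}, apply Lemma~\ref{lem 180115-1} on the $x_n$-axis, then pass to the limit---is exactly the route the paper takes. The reduction to step functions and the use of Lemma~\ref{lem 180115-1} are carried out essentially as you describe.

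The gap is in your limit step. The claim that for nonnegative bounded compactly supported $f$ the dyadic cube averages satisfy $f^{(N)}\uparrow f$ is false: conditional expectations over a refining filtration converge a.e.\ but are \emph{not} monotone (think of a spike sitting in one half of a dyadic interval). So the ``monotone approximation'' you invoke for the right-hand side does not exist, and dominated convergence alone is not available either without a uniform majorant. The paper handles this with a specific trick you are missing: since the averaging over a cube in $(x_1,\ldots,x_{n-1})$ is dominated pointwise by the iterated maximal operator, one has $f^{(N)}\le M_{n-1}\cdots M_1 f$ uniformly in $N$; then Theorem~\ref{thm 171206-1}, applied for each fixed $x_n$ in the first $n-1$ variables, gives
\[
\bigl\|f^{(N)}(\cdot,x_n)\bigr\|_{(p_1,\ldots,p_{n-1})}
\le
\bigl\|M_{n-1}\cdots M_1 f(\cdot,x_n)\bigr\|_{(p_1,\ldots,p_{n-1})}
\lesssim
\bigl\|f(\cdot,x_n)\bigr\|_{(p_1,\ldots,p_{n-1})},
\]
with constant independent of $N$. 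This provides the uniform control on the right side. On the left side one does not need $M_n f^{(N)}\to M_n f$; the pointwise inequality $M_n f\le\liminf_{N} M_n f^{(N)}$ (Fatou inside the supremum defining $M_n$, using $f^{(N)}\to f$ a.e.) together with Proposition~\ref{prop 171225-1} suffices. Replace your monotone-convergence paragraph with this argument and the proof goes through.
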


\begin{proof}
Assume that
$f \in L^{\vec{p}}({\mathbb R}^n)$ is a function of the form:
\[
f(x_1,\ldots,x_n)
=
\sum_{m' \in {\mathbb Z}^{n-1}}
\chi_{m'+[0,1]^{n-1}}(r x')f_{m'}(x_n),
\]
where $r>0$ and $\{f_{m'}\}_{m' \in {\mathbb Z}^{n-1}} \subset L^0({\mathbb R})$.
Then
\[
M_n f(x_1,\ldots,x_n)
=
\sum_{m' \in {\mathbb Z}^{n-1}}
\chi_{m'+[0,1]^{n-1}}(r x')M_n f_{m'}(x_n),
\]
since the summand is made up of at most one non-zero function
once we fix $x'$.
Define $v>0$ by
\[
\frac{1}{v}=\frac{1}{p_1}+\cdots+\frac{1}{p_{n-1}}.
\]
Then, by Proposition \ref{prop 180113-2},
\begin{align*}
&\left\|
\left\|
M_nf
\right\|_{(p_1, \ldots, p_{n-1})}
w_n(\cdot_n)^\frac{1}{p_n}\right\|_{(p_n)}\\
&=
\left\|\left\|
\sum_{m' \in {\mathbb Z}^{n-1}}
\chi_{m'+[0,1]^{n-1}}(r x')M_n f_{m'}(\cdot_n)
\right\|_{(p_1, \ldots, p_{n-1})}
w_n(\cdot_n)^\frac{1}{p_n}\right\|_{(p_n)}\\
&=
r^{-\frac1v}
\left\|\left\|
\sum_{m' \in {\mathbb Z}^{n-1}}
\chi_{m'+[0,1]^{n-1}}(x')M_n f_{m'}(\cdot_n)
\right\|_{(p_1, \ldots, p_{n-1})}
w_n(\cdot_n)^\frac{1}{p_n}\right\|_{(p_n)}.
\end{align*}
Thus, by Lemma \ref{lem 180115-1},
\begin{align*}
\left\|
\left\|
M_nf
\right\|_{(p_1, \ldots, p_{n-1})}
w_n(\cdot_n)^\frac{1}{p_n}\right\|_{(p_n)}
&=
r^{-\frac1v}
\left\|\left\|
M_n f_{m'}(\cdot_n)
\right\|_{\ell^{(p_1, \ldots, p_{n-1})}}
w_n(\cdot_n)^\frac{1}{p_n}\right\|_{(p_n)}\\
&\lesssim
r^{-\frac1v}
\left\|\left\|
f_{m'}(\cdot_n)
\right\|_{\ell^{(p_1, \ldots, p_{n-1})}}
w_n(\cdot_n)^\frac{1}{p_n}\right\|_{(p_n)}\\
&=
\left\|
\left\|
f
\right\|_{(p_1, \ldots, p_{n-1})}
w_n(\cdot_n)^\frac{1}{p_n}\right\|_{(p_n)}.\\
\end{align*}
Let $f \in L^{\vec{p}}({\mathbb R}^n)$ be arbitrary.
Write
\[
f_r(x)=
\frac{1}{r^{n-1}}
\sum_{m' \in {\mathbb Z}^{n-1}}
\chi_{r m'+[0,r]^{n-1}}(x')
\int_{r m'+[0,r]^{n-1}}f(y',x_n)\,dy'.
\]
Thanks to the Lebesgue differentiation theorem
\[
f(x', x_n)=\lim_{r \downarrow 0}f_r(x', x_n)
\]
for almost every $x' \in {\mathbb R}^{n-1}$.
Thus, by the Fatou lemma, we obtain
\[
M_n f(x) \le \liminf_{r \downarrow 0}M_n f_r(x).
\]
Meanwhile, for all $r>0$, since $f_r\le M_{n-1}\cdots M_{1}f$,
by Theorem \ref{thm 171206-1}, we get
\begin{align*}
\left\|\left\|f_r\right\|_{\vec{s}}w_n(\cdot_n)^\frac{1}{p_n}\right\|_{(p_n)}
\le
\left\|\left\| M_{n-1}\cdots M_{1}f\right\|_{\vec{s}}w_n(\cdot_n)^\frac{1}{p_n}\right\|_{(p_n)}
\lesssim
\left\|\left\|f\right\|_{\vec{s}}w_n(\cdot_n)^\frac{1}{p_n}\right\|_{(p_n)},
\end{align*}
where $\vec{s}=(p_1,\ldots, p_{n-1})$.
As a consequence,
by the Lebesgue differentiation theorem and the Fatou lemma,
we obtain
\begin{align*}
\left\|
\left\|
M_nf
\right\|_{\vec{s}}
w_n(\cdot_n)^\frac{1}{p_n}\right\|_{(p_n)}
&\le
\liminf_{r \downarrow 0}
\left\|
\left\|
M_nf_r
\right\|_{\vec{s}}
w_n(\cdot_n)^\frac{1}{p_n}\right\|_{(p_n)}\\
&\lesssim
\liminf_{r \downarrow 0}
\left\|
\left\|
f_r
\right\|_{\vec{s}}
w_n(\cdot_n)^\frac{1}{p_n}\right\|_{(p_n)}
\lesssim
\left\|\left\|f\right\|_{\vec{s}}w_n(\cdot_n)^\frac{1}{p_n}\right\|_{(p_n)}.
\end{align*}

\end{proof}

\begin{proposition} \label{prop 180119-6}
Let $1<\vec{q}<\infty$. Let $f \in L^0(\mathbb{R}^n)$ and  $w_k \in A_{q_k}(\mathbb{R})$ for $k=1,\ldots,n$.
Then, 
\[
\left\|
\mathcal{M}_1f\cdot\bigotimes_{k=1}^nw_{k}^{\frac{1}{q_k}}
\right\|_{\vec{q}}
\lesssim
\left\|
f\cdot\bigotimes_{k=1}^nw_{k}^{\frac{1}{q_k}}
\right\|_{\vec{q}}.
\]
\end{proposition}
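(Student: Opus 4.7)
The plan is to prove the proposition by induction on the ambient dimension $n$. The base case $n=1$ reduces to the classical one-dimensional weighted Hardy--Littlewood maximal inequality for $w_1\in A_{q_1}(\mathbb{R})$, since $\mathcal{M}_1 f = M_1 f$ and the statement becomes $\|M_1 f\cdot w_1^{1/q_1}\|_{q_1}\lesssim\|f\cdot w_1^{1/q_1}\|_{q_1}$.

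For the induction step from $n-1$ to $n$, I would first decompose $\mathcal{M}_1 f = M_n[M_{n-1}\cdots M_1 f]$. Since the product $\prod_{k=1}^{n-1}w_k(x_k)^{1/q_k}$ depends only on $(x_1,\ldots,x_{n-1})$, it commutes with the one-dimensional averages defining $M_n$, so that
\[
\mathcal{M}_1 f(x)\cdot\prod_{k=1}^n w_k(x_k)^{1/q_k}
= M_n\!\left[M_{n-1}\cdots M_1 f\cdot\prod_{k=1}^{n-1}w_k(\cdot_k)^{1/q_k}\right]\!(x)\cdot w_n(x_n)^{1/q_n}.
\]
Lemma \ref{lem 180119-4}, applied with $p_j=q_j$ and the weight $w_n\in A_{q_n}(\mathbb{R})$, then peels off $M_n$ and yields
\[
\left\|\mathcal{M}_1 f\cdot\bigotimes_{k=1}^n w_k^{1/q_k}\right\|_{\vec{q}}
\lesssim
\left\|M_{n-1}\cdots M_1 f\cdot\bigotimes_{k=1}^n w_k^{1/q_k}\right\|_{\vec{q}}.
\]

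Next, using the Fubini-type identity
\[
\|g\cdot w_n^{1/q_n}\|_{\vec{q}}^{q_n} = \int_{\mathbb{R}} w_n(x_n)\,\|g(\cdot,x_n)\|_{(q_1,\ldots,q_{n-1})}^{q_n}\,dx_n,
\]
I would freeze $x_n$ and apply the induction hypothesis to the slice $x'\mapsto f(x',x_n)\in L^0(\mathbb{R}^{n-1})$ with weights $w_1,\ldots,w_{n-1}$. Since the constant in the induction hypothesis depends only on $[w_k]_{A_{q_k}}$ for $k<n$, it is uniform in $x_n$; integrating in $x_n$ against $w_n$ and extracting the $q_n$-th root delivers $\|f\cdot\bigotimes_{k=1}^n w_k^{1/q_k}\|_{\vec{q}}$, which closes the induction.

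The main obstacle I anticipate is verifying the absorption step carefully: one must check that Lemma \ref{lem 180119-4} is applied in exactly the right form after $w_1,\ldots,w_{n-1}$ have been pulled inside $M_n$, and that these weights genuinely commute with the $n$-th averaging operator. The hypothesis $1<\vec{q}<\infty$ is essential both in the base case (for the weighted 1D maximal inequality) and in invoking Lemma \ref{lem 180119-4}; once the absorption step is in place, the Fubini identity and the slicewise application of the induction hypothesis are essentially bookkeeping.
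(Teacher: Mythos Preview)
Your proposal is correct and follows essentially the same route as the paper: induction on $n$, with the base case given by the one-dimensional weighted maximal inequality, and the inductive step carried out by pulling the $(x_1,\ldots,x_{n-1})$-weights inside $M_n$, applying Lemma~\ref{lem 180119-4} to peel off $M_n$, and then invoking the induction hypothesis slicewise in $x_n$. The paper writes the last step in the nested-norm notation $\|\,[\,\|\cdot\|_{(q_1,\ldots,q_{n-1})}\,]\,w_n(\cdot_n)^{1/q_n}\|_{(q_n)}$ rather than your Fubini-type identity, but the content is identical.
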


\begin{proof}
We induct on $n$.
Let $n=1$. Then, the result is the boundedness of the Hardy--Littlewood maximal operator on weighted $L^p$ spaces. 
Suppose that the result holds for $n-1$:
\[
\left\|
(M_{n-1}\cdots M_1h)\cdot\bigotimes_{k=1}^{n-1}w_{k}^{\frac{1}{q_k}}
\right\|_{(q_1,\ldots,q_{n-1})}
\lesssim
\left\|
h\cdot\bigotimes_{k=1}^{n-1}w_{k}^{\frac{1}{q_k}}
\right\|_{(q_1,\ldots,q_{n-1})}.
\]
By Lemma \ref{lem 180119-4}, we obtain
\begin{align*}
\left\|
\mathcal{M}_1f\cdot\bigotimes_{k=1}^nw_{k}^{\frac{1}{q_k}}
\right\|_{\vec{q}}
&=
\left\|
(M_n\cdots M_1f)\cdot\bigotimes_{k=1}^nw_{k}^{\frac{1}{q_k}}
\right\|_{\vec{q}}\\
&=
\left\|\left[
\left\|
M_n\left([M_{n-1}\cdots M_1f]\cdot\bigotimes_{k=1}^{n-1}w_{k}^{\frac{1}{q_k}}\right)
\right\|_{(q_1,\ldots,q_{n-1})}\right]
w_n(\cdot_n)^{\frac{1}{q_n}}\right\|_{(q_n)}\\
&\lesssim
\left\|\left[
\left\|
[M_{n-1}\cdots M_1f]\cdot\bigotimes_{k=1}^{n-1}w_{k}^{\frac{1}{q_k}}
\right\|_{(q_1,\ldots,q_{n-1})}\right]
w_n(\cdot_n)^{\frac{1}{q_n}}\right\|_{(q_n)}\\
&\lesssim
\left\|\left[
\left\|
f\cdot\bigotimes_{k=1}^{n-1}w_{k}^{\frac{1}{q_k}}
\right\|_{(q_1,\ldots,q_{n-1})}\right]
w_n(\cdot_n)^{\frac{1}{q_n}}\right\|_{(q_n)}
=
\left\|
f\cdot\bigotimes_{k=1}^nw_{k}^{\frac{1}{q_k}}
\right\|_{\vec{q}}.
\end{align*}
\end{proof}

\begin{proposition} \label{prop 180119-5}
Let $0<p<\infty$, $0<\vec{q} \le \infty$ and $\eta \in {\mathbb R}$ satisfy
\[
0<\sum_{j=1}^n \frac{1}{q_j}-\frac{n}{p}<\eta<1.
\]
Then, for $f \in L^0(\mathbb{R}^n)$
\[
\|f\|_{{\mathcal M}^p_{\vec{q}}}
\sim
\sup_{Q \in {\mathcal Q}}
|Q|^{\frac1p-\frac1n\sum_{j=1}^n \frac{1}{q_j}}
\|f(\mathcal{M}_1\chi_Q)^\eta\|_{\vec{q}}.
\]
\end{proposition}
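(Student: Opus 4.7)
The plan is to prove $\sim$ via two inequalities. For the easy $\lesssim$ direction (the left-hand side bounded by the right), write $Q=I_1\times\cdots\times I_n$, so that $\mathcal{M}_1\chi_Q(x)=\prod_{j=1}^n M_j\chi_{I_j}(x_j)$; each factor equals $1$ whenever $x_j\in I_j$, whence $\mathcal{M}_1\chi_Q\equiv 1$ on $Q$. In particular $(\mathcal{M}_1\chi_Q)^\eta\ge\chi_Q$ pointwise, which gives $\|f\chi_Q\|_{\vec q}\le\|f(\mathcal{M}_1\chi_Q)^\eta\|_{\vec q}$ for every cube $Q$; multiplying by $|Q|^{\frac1p-\frac1n\sum 1/q_j}$ and taking the supremum yields this direction at once.

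For the reverse inequality, the central step is the pointwise decay estimate
\[
(\mathcal{M}_1\chi_Q(x))^\eta \lesssim \sum_{k=0}^\infty 2^{-k\eta}\chi_{2^{k+1}Q}(x)\qquad(x\in\mathbb{R}^n).
\]
To prove it, I again use the product formula $\mathcal{M}_1\chi_Q(x)=\prod_{j=1}^n M_j\chi_{I_j}(x_j)$ combined with the one-dimensional estimate $M_j\chi_{I_j}(x_j)\lesssim\min\bigl(1,\,\ell(Q)/|x_j-c_j|\bigr)$, where $c_j$ denotes the center of $I_j$. If $x\in 2^{k+1}Q\setminus 2^kQ$ with $k\ge 1$, then $x\notin 2^kQ$ forces at least one coordinate to satisfy $|x_j-c_j|\ge 2^{k-1}\ell(Q)$, which in turn gives $M_j\chi_{I_j}(x_j)\lesssim 2^{-k}$; since every remaining factor is at most $1$, this produces $\mathcal{M}_1\chi_Q(x)\lesssim 2^{-k}$. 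The leftover region $x\in 2Q$ is absorbed into the $k=0$ term using $\mathcal{M}_1\chi_Q\le 1$.

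Granted this pointwise bound, I would estimate
\[
\|f(\mathcal{M}_1\chi_Q)^\eta\|_{\vec q}
\lesssim \sum_{k=0}^\infty 2^{-k\eta}\|f\chi_{2^{k+1}Q}\|_{\vec q}
\lesssim \|f\|_{\mathcal{M}^p_{\vec q}}\,|Q|^{\frac1n\sum\frac{1}{q_j}-\frac1p}\sum_{k=0}^\infty 2^{(k+1)(\sum 1/q_j - n/p)-k\eta},
\]
where the second inequality applies the definition of the Morrey norm on the cube $2^{k+1}Q$ (together with Example~\ref{ex 171108-1} to compute $|2^{k+1}Q|^{\frac1n\sum 1/q_j - \frac1p}$). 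The geometric series converges precisely when $\eta>\sum 1/q_j-n/p$, which is the stated hypothesis; multiplying by $|Q|^{\frac1p-\frac1n\sum 1/q_j}$ and taking the supremum over $Q$ completes the proof. The main obstacle is the pointwise decay of $(\mathcal{M}_1\chi_Q)^\eta$: because the iterated maximal operator has a product structure (not a rotation-invariant one as for $M$), decay is forced only along a single "worst" coordinate on each dyadic annulus, so one must locate that coordinate before invoking the one-dimensional estimate; once this is in place, the remainder is a routine Morrey-space computation.
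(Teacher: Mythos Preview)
Your proof is correct and follows essentially the same strategy as the paper: both exploit the product structure $\mathcal{M}_1\chi_Q=\prod_j M_j\chi_{I_j}$ together with the one-dimensional decay $M_j\chi_{I_j}(x_j)\lesssim \ell(Q)/(\ell(Q)+|x_j-c_j|)$, decompose dyadically, and sum a geometric series converging precisely when $\eta>\sum_j 1/q_j-n/p$. The only difference is in the decomposition itself: the paper splits over $(l_1,\dots,l_n)\in\mathbb{N}^n$, obtaining decay $2^{-(l_1+\cdots+l_n)\eta}$ on each rectangular block $2^{l_1}I_1\times\cdots\times 2^{l_n}I_n$ and then enlarging to the cube $2^{\max l_j}Q$, whereas you decompose directly over cubic annuli $2^{k+1}Q\setminus 2^kQ$ and extract decay $2^{-k\eta}$ from a single worst coordinate. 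Your route is slightly more economical (a single sum rather than an $n$-fold one), at the cost of discarding the extra decay in the remaining coordinates; since both series converge under the same hypothesis, nothing is lost.
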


\begin{proof}
One inequality is clear:
\[
\|f\|_{{\mathcal M}^p_{\vec{q}}}
\le
\sup_{Q \in {\mathcal Q}}
|Q|^{\frac1p-\frac1n\sum_{j=1}^n \frac{1}{q_j}}
\|f(\mathcal{M}_1\chi_Q)^\eta\|_{\vec{q}}.
\]
We need to show the opposite inequality.
To this end, we fix a cube $Q=I_1 \times \cdots \times I_n$.
Given $(l_1,\ldots,l_n)\in {\mathbb N}^n$,
we write $l=\max(l_1,\ldots,l_n)$.
Then we have
\begin{align*}
\lefteqn{
|Q|^{\frac1p-\frac1n\sum_{j=1}^n \frac{1}{q_j}}
\|f(\mathcal{M}_1\chi_Q)^\eta\|_{\vec{q}}
}\\
&\lesssim
|Q|^{\frac1p-\frac1n\sum_{j=1}^n \frac{1}{q_j}}
\left\|f\prod_{j=1}^n\left( \frac{\ell(I_j)}{\ell(I_j)+|\cdot_j-c(I_j)|}\right)^\eta\right\|_{\vec{q}}\\
&\lesssim
|Q|^{\frac1p-\frac1n\sum_{j=1}^n \frac{1}{q_j}}
\sum_{l_1,\ldots,l_n=1}^\infty
\frac{1}{2^{(l_1+\cdots+l_n)\eta}}
\left\|f\chi_{2^{l_1}I_1 \times \cdots \times 2^{l_n}I_n}\right\|_{\vec{q}}\\
&\lesssim
|Q|^{\frac1p-\frac1n\sum_{j=1}^n \frac{1}{q_j}}
\sum_{l_1,\ldots,l_n=1}^\infty
\frac{1}{2^{(l_1+\cdots+l_n)\eta}}
\left\|f\chi_{2^{l}Q}\right\|_{\vec{q}}\\
&\lesssim
\sum_{l_1,\ldots,l_n=1}^\infty
\frac{2^{\frac{l}{n}\sum_{j=1}^n \frac{1}{q_j}-\frac{2^{l}}{p}}}{2^{(l_1+\cdots+l_n)\eta}}
|2^{l}Q|^{\frac1p-\frac1n\sum_{j=1}^n \frac{1}{q_j}}
\left\|f\chi_{2^{l}Q}\right\|_{\vec{q}}\\
&\lesssim
\|f\|_{{\mathcal M}^p_{\vec{q}}},
\end{align*}
where $c(I_j)$ denotes the center of $I_j$. 
Hence, we obtain the result.
\end{proof}

We recall Theorem \ref{thm 180121-1}.

\begin{theorem} \label{thm 180119-1}
Let $0<\vec{q}\le\infty$ and $0<p<\infty$ satisfy 
\[
\frac np \le\sum_{j=1}^n\frac1q_j,
\quad
\frac{n-1}{n}p<\max(q_1,\ldots,q_n).
\]
 If $0<t<\min(q_1, \ldots, q_n, p)$, then
\begin{equation*}
\|{\mathcal M}_tf\|_{\mathcal{M}^p_{\vec{q}}(\mathbb{R}^n)} 
\lesssim \|f\|_{\mathcal{M}^p_{\vec{q}}(\mathbb{R}^n)}
\end{equation*}
for all $f \in \mathcal{M}^p_{\vec{q}}(\mathbb{R}^n)$.
\end{theorem}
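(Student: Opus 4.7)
The plan is to reduce to $t=1$ by homogeneity and then combine the weighted-Lebesgue characterisation of the Morrey norm (Proposition~\ref{prop 180119-5}) with the weighted iterated maximal inequality (Proposition~\ref{prop 180119-6}). Throughout I write $\bar q=\sum_{j=1}^n 1/q_j$ and $\tilde q=\max_jq_j$.

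First, setting $F=|f|^t$ and using $\mathcal{M}_tf=(\mathcal{M}_1F)^{1/t}$ together with the scaling $\|g^{1/t}\|_{\mathcal{M}^p_{\vec q}}=\|g\|_{\mathcal{M}^{p/t}_{\vec q/t}}^{1/t}$, the target inequality is equivalent to $\|\mathcal{M}_1F\|_{\mathcal{M}^{p/t}_{\vec q/t}}\lesssim\|F\|_{\mathcal{M}^{p/t}_{\vec q/t}}$. The hypothesis $t<\min(q_j,p)$ gives $p/t>1$ and $\vec q/t>1$, and the geometric conditions $n/p\le\bar q$ and $(n-1)p/n<\tilde q$ are scale invariant. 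So after relabeling it suffices to prove the $t=1$ version under $1<p<\infty$ and $1<\vec q<\infty$.

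Second, for $Q=I_1\times\cdots\times I_n$, iterating each $M_j$ on the product structure of $\chi_Q$ produces the tensor product $\mathcal{M}_1\chi_Q(x)=\prod_{j=1}^n M\chi_{I_j}(x_j)$. Fixing $\eta$ in the range $\bar q-n/p<\eta<1/\tilde q$, two applications of Proposition~\ref{prop 180119-5}, one on $\mathcal{M}_1f$ and one in the reverse direction on $f$, reduce the theorem to the estimate
\[
\sup_Q |Q|^{1/p-\bar q/n}\bigl\|\mathcal{M}_1 f\cdot(\mathcal{M}_1\chi_Q)^\eta\bigr\|_{\vec q}
\lesssim
\sup_Q |Q|^{1/p-\bar q/n}\bigl\|f\cdot(\mathcal{M}_1\chi_Q)^\eta\bigr\|_{\vec q}.
\]

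Third, for each cube $Q$ I apply Proposition~\ref{prop 180119-6} with the one-variable weights $w_j(x_j):=(M\chi_{I_j})^{\eta q_j}(x_j)$, which satisfy $\prod_j w_j^{1/q_j}=(\mathcal{M}_1\chi_Q)^\eta$. The bound $\eta q_j<1$ (ensured by $\eta<1/\tilde q$) lets Coifman--Rochberg place $w_j\in A_1(\mathbb{R})\subset A_{q_j}(\mathbb{R})$, and Proposition~\ref{prop 180119-6} then yields
\[
\bigl\|\mathcal{M}_1 f\cdot(\mathcal{M}_1\chi_Q)^\eta\bigr\|_{\vec q}\lesssim \bigl\|f\cdot(\mathcal{M}_1\chi_Q)^\eta\bigr\|_{\vec q}
\]
uniformly in $Q$. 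Taking the supremum in $Q$ closes the chain.

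The main technical point is verifying that the admissible range $(\bar q-n/p,\,1/\tilde q)$ for $\eta$ is non-empty; this is precisely where the hypothesis $(n-1)p/n<\tilde q$ is used, in tandem with $n/p\le\bar q$ and $1<\vec q$, so that the Morrey characterisation via $(\mathcal{M}_1\chi_Q)^\eta$ and the $A_{q_j}$ condition on the one-variable factors of this tensor-product weight can be made to coexist.
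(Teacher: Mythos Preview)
Your proposal is correct and follows the paper's proof essentially verbatim: reduce to $t=1$ by scaling, then combine Proposition~\ref{prop 180119-5} with Proposition~\ref{prop 180119-6} applied to the tensorised $A_1$ weights $(M_j\chi_{I_j})^{\eta q_j}$, choosing $\eta\in(\bar q-n/p,\,1/\tilde q)$. The paper invokes the hypothesis $(n-1)p/n<\tilde q$ for the non-emptiness of this interval in exactly the same way you do.
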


\begin{proof}
We have only to check for $t=1, 1<p<\infty$ and $1<\vec{q} \le \infty$
as we did in Theorem \ref{thm 171206-1}.
For $\eta \in \mathbb{R}$ satisfying
\begin{equation} \label{eq 180119-7}
0<\sum_{j=1}^n \frac{1}{q_j}-\frac{n}{p}<\eta<\frac{1}{\max(q_1,\ldots,q_n)},
\end{equation}
once we show  
\begin{equation}\label{eq 180119-8}
\|\mathcal{M}_1f(\mathcal{M}_1\chi_Q)^\eta\|_{\vec{q}}
\lesssim
\|f(\mathcal{M}_1\chi_Q)^\eta\|_{\vec{q}},
\end{equation}
we get
\[
|Q|^{\frac1p-\frac1n\sum_{j=1}^n \frac{1}{q_j}}
\|\mathcal{M}_1f(\mathcal{M}_1\chi_Q)^\eta\|_{\vec{q}}
\lesssim
|Q|^{\frac1p-\frac1n\sum_{j=1}^n \frac{1}{q_j}}
\|f(\mathcal{M}_1\chi_Q)^\eta\|_{\vec{q}}.
\]
Remark that such an $\eta$ exists because
\[
\frac{n-1}{n}p<\max(q_1,\ldots,q_n).
\]
Taking the supremum for all cubes and using Proposition \ref{prop 180119-5}, we conclude the result.

We shall show (\ref{eq 180119-8}).
Let $Q=I_1 \times I_2 \times \cdots \times I_n$. Then, 
\[
(\mathcal{M}_1\chi_Q)^\eta
=
\left(\bigotimes_{j=1}^nM_j\chi_{I_j}\right)^\eta
=
\bigotimes_{j=1}^n\left(M_j\chi_{I_j}\right)^\eta.
\]
Here, $(M_j\chi_{I_j})^{\eta q_j}$ is an $A_1$-weight if and only if 
$\eta>0$ satisfies
\begin{equation}\label{eq 180119-9}
0\le\eta q_j<1,
\end{equation} 
and so $(M_j\chi_{I_j})^{\eta q_j}\in A_1 \subset A_{q_j}$ for all $q_j$.
Thus, by Proposition \ref{prop 180119-6},
\begin{align*}
\left\|\mathcal{M}_1f(\mathcal{M}_1\chi_Q)^\eta\right\|_{\vec{q}}
&=
\left\|(\mathcal{M}_1f)\bigotimes_{j=1}^n\left(M_j\chi_{I_j}\right)^\eta\right\|_{\vec{q}}
\lesssim
\left\|f\bigotimes_{j=1}^n\left(M_j\chi_{I_j}\right)^\eta\right\|_{\vec{q}}
=
\|f(\mathcal{M}_1\chi_Q)^\eta\|_{\vec{q}}.
\end{align*}
Thus, (\ref{eq 180119-8}) holds.
\end{proof}

In Theorem {\ref{thm 180119-1}}, letting $q_j=q$ for all $j=1, \ldots, n$, we get the following result:

\begin{corollary}
Let 
\[
0<\frac{n-1}{n}p<q\le p<\infty.
\]
 If $0<t<q$, then
\[
\|{\mathcal M}_tf\|_{\mathcal{M}^p_q(\mathbb{R}^n)} 
\lesssim \|f\|_{\mathcal{M}^p_q(\mathbb{R}^n)}
\]
for all $f \in \mathcal{M}^p_q(\mathbb{R}^n)$.
\end{corollary}

\section{Proof of Theorem \ref{thm 171115-1} and related inequalities}
\label{sec dual inequality}

Next, we shall show the dual inequality of Stein type \cite{F-S}
for the iterated maximal operator and $L^{\vec{p}}(\mathbb{R}^n)$.

\begin{proposition}\label{prop 171125-1}
Let $f,w$
be measurable functions.
Suppose in addition that $w \ge 0$
almost everywhere.
Let $1\le i_1, i_2, \cdots, i_k\le n \,(1\le k\le n)$ and $i_j \neq i_k (j \neq k)$.
Then for all
$1<p<\infty$,
\begin{eqnarray*}
\int_{{\mathbb R}^n} M_{i_k}\cdots M_{i_1}f(x)^p\cdot w(x){\rm d}x
\lesssim
\int_{{\mathbb R}^n} |f(x)|^p\cdot  M_{i_1}\cdots M_{i_k}w(x){\rm d}x.
\end{eqnarray*}
\end{proposition}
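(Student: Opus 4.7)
The plan is to prove the inequality by a straightforward induction on $k$, peeling off the outermost one-variable maximal operator $M_{i_k}$ at each step and invoking the one-dimensional Fefferman--Stein weighted inequality with the remaining coordinates frozen.

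The key base ingredient is the classical one-dimensional Fefferman--Stein inequality \cite{F-S}: for $1 < p < \infty$ and measurable $g, w : {\mathbb R} \to {\mathbb R}$ with $w \ge 0$,
\[
\int_{\mathbb R} (M g)(t)^p w(t) \, {\rm d}t \lesssim \int_{\mathbb R} |g(t)|^p Mw(t) \, {\rm d}t.
\]
Applying this with $g$ and $w$ regarded as functions of $x_{i_k}$ alone (all other variables held fixed), and then integrating the resulting one-variable inequality over the remaining $n-1$ variables by Fubini, yields the ``one-step'' estimate
\[
\int_{{\mathbb R}^n} (M_{i_k} g)(x)^p w(x) \, {\rm d}x \lesssim \int_{{\mathbb R}^n} |g(x)|^p M_{i_k} w(x) \, {\rm d}x
\]
for every measurable $g$ on ${\mathbb R}^n$ and every non-negative $w$ on ${\mathbb R}^n$.

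Now I would iterate this one-step bound $k$ times. First take $g = M_{i_{k-1}} \cdots M_{i_1} f$ to pick up a factor $M_{i_k}$ on the weight; then apply the one-step estimate again with $g = M_{i_{k-2}} \cdots M_{i_1} f$ and weight $M_{i_k} w$ to produce $M_{i_{k-1}} M_{i_k} w$; continuing, after $k$ applications I arrive at
\[
\int_{{\mathbb R}^n} (M_{i_k} \cdots M_{i_1} f)(x)^p w(x) \, {\rm d}x \lesssim \int_{{\mathbb R}^n} |f(x)|^p \, M_{i_1} M_{i_2} \cdots M_{i_k} w(x) \, {\rm d}x,
\]
which is the claimed inequality. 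The reversal in the ordering of the $M_{i_j}$'s on the right-hand side is an artifact of peeling the maximal operators off the left-hand side from the outside in.

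I do not anticipate a genuine obstacle. The one mildly delicate point is the joint measurability on ${\mathbb R}^n$ of each intermediate function $M_{i_{k-j}} \cdots M_{i_1} f$ and of each accumulating weight $M_{i_{k-j+1}} \cdots M_{i_k} w$, which is needed before Fubini and the one-dimensional inequality can be applied; this is routine because each one-variable maximal operator preserves joint measurability and acts only in a single coordinate direction. The assumption that the indices $i_1, \ldots, i_k$ are pairwise distinct enters implicitly, in that each step acts on a fresh coordinate direction, so the use of Fubini in each iteration is transparent.
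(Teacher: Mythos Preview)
Your proposal is correct and follows essentially the same approach as the paper: both arguments peel off the outermost one-variable maximal operator by freezing the remaining coordinates, applying the one-dimensional Fefferman--Stein dual inequality in the $x_{i_k}$ variable, integrating out the frozen variables via Fubini, and then inducting (or iterating) on $k$. Your remark about joint measurability is a welcome clarification that the paper leaves implicit.
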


\begin{proof}
We use induction on $k$.
Let $k=1$. Fix $(x_1, \ldots, x_{i_1-1}, x_{i_1+1}, \ldots, x_n)$. Then, by the dual inequality of Stein type for $M_{i_1}$,
 we get
\begin{equation*}
\int_{{\mathbb R}} M_{i_1}f(x)^p\cdot w(x){\rm d}x_{i_1}
\lesssim
\int_{{\mathbb R}} |f(x)|^p\cdot M_{i_1}w(x){\rm d}x_{i_1}.
\end{equation*}
Integrating this estimate against $(x_1, \ldots, x_{i_1-1}, x_{i_1+1}, \ldots, x_n)$, we have
\begin{equation*}
\int_{{\mathbb R}^n} M_{i_1}f(x)^p\cdot w(x){\rm d}x
\lesssim
\int_{{\mathbb R}^n} |f(x)|^p\cdot M_{i_1}w(x){\rm d}x.
\end{equation*}
Suppose that the result holds for $k-1$. Then,
fix $(x_1, \ldots, x_{i_k-1}, x_{i_k+1}, \ldots, x_n)$. Again, by the dual inequality of Stein type for $M_{i_k}$, 
we get
\begin{equation*}
\int_{{\mathbb R}} M_{i_k}\cdots M_{i_1}f(x)^p\cdot w(x){\rm d}x_{i_k}
\lesssim
\int_{{\mathbb R}} M_{i_{k-1}}\cdots M_{i_1}f(x)^p\cdot M_{i_k}w(x){\rm d}x_{i_k}.
\end{equation*}
Integrating this estimate against $(x_1, \ldots, x_{i_k-1}, x_{i_k+1}, \ldots, x_n)$ and using induction hypothesis, we have
\begin{align*}
\int_{{\mathbb R}^n} M_{i_k}\cdots M_{i_1}f(x)^p\cdot w(x){\rm d}x
&\lesssim
\int_{{\mathbb R}^n} M_{i_{k-1}}\cdots M_{i_1}f(x)^p\cdot M_{i_k}w(x){\rm d}x\\
&\lesssim
\int_{{\mathbb R}^n} |f(x)|^p\cdot M_{i_1}\cdots M_{i_k}w(x){\rm d}x.
\end{align*}
\end{proof}

The following corollary extends the dual inequality of Stein type for $L^p(\mathbb{R}^n)$.

\begin{corollary}\label{cor 171125-3}
Let $f,w$
be measurable functions.
Suppose in addition that $w \ge 0$
almost everywhere.
Then for all
$0<p<\infty$ and $0<t<p$, 
\begin{eqnarray*}
\int_{{\mathbb R}^n} \mathcal{M}_tf(x)^p\cdot w(x){\rm d}x
\lesssim
\int_{{\mathbb R}^n} |f(x)|^p\cdot  M_1\cdots M_nw(x){\rm d}x
\quad (f \in L^0(\mathbb{R}^n)).
\end{eqnarray*}
\end{corollary}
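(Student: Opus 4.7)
The plan is to reduce Corollary \ref{cor 171125-3} to Proposition \ref{prop 171125-1} by a change of function and exponent. Concretely, I would set $g=|f|^t$ and $q=p/t$; the hypothesis $0<t<p$ guarantees $1<q<\infty$, which is exactly the range in which Proposition \ref{prop 171125-1} is stated. Then, by the definition of the iterated maximal operator,
\[
\mathcal{M}_tf(x)^p=\bigl(M_n\cdots M_1[|f|^t](x)\bigr)^{p/t}=\bigl(M_n\cdots M_1 g(x)\bigr)^q,
\]
while $|f(x)|^p=g(x)^q$, so the desired inequality is
\[
\int_{\mathbb{R}^n}\bigl(M_n\cdots M_1 g(x)\bigr)^q w(x)\,\mathrm{d}x
\lesssim
\int_{\mathbb{R}^n} g(x)^q\cdot M_1\cdots M_n w(x)\,\mathrm{d}x.
\]

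This last inequality is precisely Proposition \ref{prop 171125-1} applied with the choice of indices $i_j=j$ for $j=1,\ldots,n$ (so that $M_{i_n}\cdots M_{i_1}=M_n\cdots M_1$ on the left, and $M_{i_1}\cdots M_{i_n}=M_1\cdots M_n$ on the right), with the function $g$ in place of $f$ and the exponent $q$ in place of $p$. The non-negativity assumption on $w$ is inherited from the hypothesis, and $g\ge 0$ automatically.

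There is essentially no technical obstacle here; the only thing to check carefully is the bookkeeping that $q=p/t>1$ (so that the classical dual Stein inequality that underlies Proposition \ref{prop 171125-1} is applicable), and that the ordering of the operators on the two sides of Proposition \ref{prop 171125-1} gives exactly $M_n\cdots M_1$ on the left and $M_1\cdots M_n$ on the right, which matches the statement we want. Therefore, after this substitution, the corollary follows immediately from Proposition \ref{prop 171125-1} without any further estimate.
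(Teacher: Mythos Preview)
Your proposal is correct and matches the paper's own proof essentially line for line: the paper also rewrites $\mathcal{M}_tf(x)^p=\bigl(M_n\cdots M_1[|f|^t](x)\bigr)^{p/t}$ and applies Proposition~\ref{prop 171125-1} with exponent $p/t>1$ and indices $i_j=j$. The only difference is notational---you introduce $g=|f|^t$ and $q=p/t$ explicitly, whereas the paper leaves these substitutions implicit.
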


\begin{proof}
Using Proposition \ref{prop 171125-1}, we have
\begin{align*}
\int_{{\mathbb R}^n} \mathcal{M}_tf(x)^p\cdot w(x){\rm d}x
&=
\int_{{\mathbb R}^n} \left( M_n \cdots M_1 [|f|^t](x) \right)^{\frac{p}{t}}\cdot w(x){\rm d}x\\
&\lesssim
\int_{{\mathbb R}^n} (|f(x)|^t)^{\frac{p}{t}}\cdot M_1 \cdots M_n w(x) {\rm d}x\\
&=
\int_{{\mathbb R}^n} |f(x)|^p\cdot  M_1 \cdots M_nw(x){\rm d}x.
\end{align*}
\end{proof}

We recall Theorem \ref{thm 171115-1}.

\begin{theorem}[Dual inequality of Stein type for $L^{\vec{p}}$]
Let $f$ be a measurable function on $\mathbb{R}^n$ and $1\le\vec{p}<\infty$.
Then 
if $0< t<\min(p_1, \ldots, p_n)$ and $w_j^t\in A_{p_j}(\mathbb{R})$,
\begin{eqnarray*}
\left\|
{\mathcal M}_t f\cdot \bigotimes_{j=1}^n(w_j)^{\frac{1}{p_j}}
\right\|_{\vec{p}}
\lesssim
\left\|
f \cdot \bigotimes_{j=1}^n\left(M_j w_j\right)^{\frac{1}{p_j}}
\right\|_{\vec{p}}.
\end{eqnarray*}
\end{theorem}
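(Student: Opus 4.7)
The plan is to combine a rescaling reduction with induction on $n$, peeling off the outermost maximal operator $M_n$ at each step. Setting $g=|f|^t$ and $\vec{q}=\vec{p}/t$, the identity $\|h^{1/t}\|_{\vec{p}}^t=\|h\|_{\vec{p}/t}$ for $h\ge 0$ reduces the claim to
\[
\|M_n\cdots M_1 g\cdot \textstyle\prod_j w_j^{1/q_j}\|_{\vec{q}} \lesssim \|g\cdot \textstyle\prod_j (M_j w_j)^{1/q_j}\|_{\vec{q}},
\]
with $1<\vec{q}<\infty$ since $t<\min_j p_j$; the condition $w_j^t\in A_{p_j}$ ensures that the quantities appearing are well-defined. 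The base case $n=1$ is the classical scalar dual Stein inequality $\int(Mg)^{q_1}w_1 \lesssim \int g^{q_1} Mw_1$, valid for $1<q_1<\infty$ and any non-negative weight.

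For the inductive step, I write the $\vec{q}$-norm as an $L^{q_n}_{x_n}$-integral of the inner $L^{(q_1,\ldots,q_{n-1})}$-norm in $y=(x_1,\ldots,x_{n-1})$. Since $M_n$ commutes with the $x_n$-independent factor $\prod_{j<n}w_j^{1/q_j}$, the LHS of the reduced inequality raised to the $q_n$-th power equals
\[
\int_{\mathbb R} w_n(x_n)\,\|(M_n H)(\cdot,x_n)\|_{L^{(q_1,\ldots,q_{n-1})}_y}^{q_n}\,dx_n,
\]
where $H(y,x_n):=(M_{n-1}\cdots M_1 g)(y,x_n)\cdot\prod_{j<n}w_j(y_j)^{1/q_j}$. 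To remove the outer $M_n$, I invoke a vector-valued dual Stein inequality: for $X:=L^{(q_1,\ldots,q_{n-1})}(\mathbb{R}^{n-1})$, any non-negative weight $w$ on $\mathbb{R}$, and any non-negative $F$ on $\mathbb{R}^n$,
\[
\int w(t)\,\|(M_n F)(\cdot,t)\|_X^{q_n}\,dt \;\lesssim\; \int Mw(t)\,\|F(\cdot,t)\|_X^{q_n}\,dt.
\]
Applied to $F=H$ and $w=w_n$, this yields $\|M_n\cdots M_1 g\cdot\prod_j w_j^{1/q_j}\|_{\vec{q}} \lesssim \|H\cdot(M_n w_n)^{1/q_n}\|_{\vec{q}}$. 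Since $(M_n w_n)^{1/q_n}$ depends only on $x_n$, it factors out of the inner $(q_1,\ldots,q_{n-1})$-norm, and the $(n-1)$-dimensional induction hypothesis applied slicewise to $g(\cdot,x_n)$ bounds $\|H(\cdot,x_n)\|_{(q_1,\ldots,q_{n-1})}$ by $\|g(\cdot,x_n)\prod_{j<n}(M_j w_j)^{1/q_j}\|_{(q_1,\ldots,q_{n-1})}$, with constant uniform in $x_n$; integrating in $x_n$ reassembles the right-hand side of the reduced inequality.

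The main obstacle is establishing the vector-valued dual Stein inequality above. The naive lattice bound $\|M_n F\|_X \le M_n\|F\|_X$ is false in general, as demonstrated by the counter-example to \eqref{eq 180114-1} given earlier in the paper, so the proof must be genuinely vector-valued. I would argue in two stages: first, Lemma \ref{lem 180119-4} directly provides the $A_{q_n}$-weighted analogue (with a single weight $W\in A_{q_n}$ on both sides in place of $w$ and $Mw$); second, one upgrades to the dual form by the classical Fefferman--Stein duality-and-covering argument adapted to the Banach-lattice setting, using Coifman--Rochberg to approximate the arbitrary weight $w$ by $(Mw)^{1-\eps}\in A_1 \subset A_{q_n}$ for $0<\eps<1$, combined if necessary with the extrapolation Proposition \ref{thm:161125-1}. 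With this lemma in hand, the induction described above delivers the reduced inequality, and unscaling back from $\vec{q}$ to $\vec{p}$ completes the proof.
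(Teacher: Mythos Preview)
Your overall architecture---reduce to $t=1$, induct on $n$, peel off $M_n$ via a weighted bound in the last variable, then apply the $(n-1)$-dimensional hypothesis slicewise---is exactly the paper's approach. The divergence, and the gap, is in how you handle the outer $M_n$.

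You treat the hypothesis $w_j^t\in A_{p_j}$ as a mere regularity assumption and then aim for a \emph{stronger} vector-valued dual inequality
\[
\int_{\mathbb R} w(s)\,\|(M_n F)(\cdot,s)\|_{X}^{q_n}\,{\rm d}s \;\lesssim\; \int_{\mathbb R} Mw(s)\,\|F(\cdot,s)\|_{X}^{q_n}\,{\rm d}s
\]
for an \emph{arbitrary} weight $w$. Your proposed route---apply Lemma~\ref{lem 180119-4} with $(Mw)^{1-\eps}\in A_1$ and then pass to $w$ on the left and $Mw$ on the right---does not close: from $\int (Mw)^{1-\eps}\|M_nF\|_X^{q_n}\lesssim_\eps \int (Mw)^{1-\eps}\|F\|_X^{q_n}$ you cannot recover the desired inequality, since neither $w\le (Mw)^{1-\eps}$ nor $(Mw)^{1-\eps}\le Mw$ holds pointwise in general, and the implicit constant blows up as $\eps\downarrow 0$. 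Extrapolation (Proposition~\ref{thm:161125-1}) does not help either, as it only moves between exponents while staying inside the $A_p$ class.

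The paper avoids this detour entirely by \emph{using} the $A_p$ hypothesis: after reduction one has $w_n\in A_{q_n}$, so Lemma~\ref{lem 180119-4} applies directly with the same weight $w_n$ on both sides, and the passage from $w_n$ to $M_nw_n$ on the right is then just the pointwise bound $w_n\le M_nw_n$ from the Lebesgue differentiation theorem. In other words, the $A_p$ condition is not incidental---it is precisely what makes Lemma~\ref{lem 180119-4} available at the inductive step. Replace your ``upgrade'' paragraph with this two-line argument and your proof becomes correct and essentially identical to the paper's.
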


\begin{proof}
We have only to check when $t=1$ and $1<\vec{p}<\infty$.
We use induction on $n$.
Let $n=1$. 
Then the result follows from the classical case of the dual inequality of Stein type.
Suppose that the result holds for $n-1$.
Then, the following inequality follows:
for $1<(q_1, \ldots, q_{n-1})<\infty$, $h \in L^{0}(\mathbb{R}^{n-1})$ and $v_j \in L^{0}(\mathbb{R})$,
\begin{equation} \label{eq 171115-2}
\left\|
(M_{n-1}\cdots M_1 h)\cdot 
\bigotimes_{j=1}^{n-1}v_j^{\frac{1}{q_j}}
\right\|_{(q_1, \ldots, q_{n-1})}
\lesssim
\left\|
h \cdot \bigotimes_{j=1}^{n-1}\left(M_j v_j\right)^{\frac{1}{q_j}}
\right\|_{(q_1, \ldots, q_{n-1})}.
\end{equation}
From the definition of the norm $\|\cdot\|_{\vec{p}}$, we get
\begin{align}\label{eq 180518-1}
&\left\|
(M_n\cdots M_1 f)\cdot 
\bigotimes_{j=1}^{n}w_j^{\frac{1}{p_j}}
\right\|_{\vec{p}}
=
\left\|
\left[
\left\|
(M_n\cdots M_1 f)\cdot 
\bigotimes_{j=1}^{n}w_j^{\frac{1}{p_j}}
\right\|_{(p_1, \ldots, p_{n-1})}
\right]
\right\|_{(p_n)} \nonumber\\
&=
\left\|
\left[
\left\|
(M_n\cdots M_1 f)\cdot 
\bigotimes_{j=1}^{n-1}w_j^{\frac{1}{p_j}}
\right\|_{(p_1, \ldots, p_{n-1})}
\right]w_n(\cdot_n)^\frac{1}{p_n}
\right\|_{(p_n)}.
\end{align}
By the Lebesgue differention theorem, $w_n\le M_nw_n$. Thus, by Lemma \ref{lem 180119-4},
\begin{align*}
&\left\|
(M_n\cdots M_1 f)\cdot 
\bigotimes_{j=1}^{n}w_j^{\frac{1}{p_j}}
\right\|_{\vec{p}}\\
&=
\left\|
\left[
\left\|
M_n\left[(M_{n-1}\cdots M_1 f)\cdot 
\bigotimes_{j=1}^{n-1}w_j^{\frac{1}{p_j}}\right]
\right\|_{(p_1, \ldots, p_{n-1})}
\right]w_n(\cdot_n)^\frac{1}{p_n}
\right\|_{(p_n)}\\
&\lesssim
\left\|
\left[
\left\|
(M_{n-1}\cdots M_1 f)\cdot 
\bigotimes_{j=1}^{n-1}w_j^{\frac{1}{p_j}}
\right\|_{(p_1, \ldots, p_{n-1})}
\right]w_n(\cdot_n)^\frac{1}{p_n}
\right\|_{(p_n)}\\
&\le
\left\|
\left[
\left\|
(M_{n-1}\cdots M_1 f)\cdot 
\bigotimes_{j=1}^{n-1}w_j^{\frac{1}{p_j}}
\right\|_{(p_1, \ldots, p_{n-1})}
\right]M_nw_n(\cdot_n)^\frac{1}{p_n}
\right\|_{(p_n)}.\\
\end{align*}
Thus, by induction hypothesis (\ref{eq 171115-2}),
\begin{align*}
&\left\|
(M_n\cdots M_1 f)\cdot 
\bigotimes_{j=1}^{n}w_j^{\frac{1}{p_j}}
\right\|_{\vec{p}}\\
&\lesssim
\left\|
\left[
\left\|
(M_{n-1}\cdots M_1 f)\cdot 
\bigotimes_{j=1}^{n-1}w_j^{\frac{1}{p_j}}
\right\|_{(p_1, \ldots, p_{n-1})}
\right]M_nw_n(\cdot_n)^\frac{1}{p_n}
\right\|_{(p_n)}\\
&\lesssim
\left\|
\left[
\left\|f\cdot 
\bigotimes_{j=1}^{n-1}(M_jw_j)^{\frac{1}{p_j}}
\right\|_{(p_1, \ldots, p_{n-1})}
\right]M_nw_n(\cdot_n)^\frac{1}{p_n}
\right\|_{(p_n)}\\
&=
\left\|
\left[
\left\|
f\cdot \bigotimes_{j=1}^{n}(M_jw_j)^{\frac{1}{p_j}}
\right\|_{(p_1, \ldots, p_{n-1})}
\right]
\right\|_{(p_n)}
=
\left\|
f \cdot \bigotimes_{j=1}^n\left(M_j w_j\right)^{\frac{1}{p_j}}
\right\|_{\vec{p}}.
\end{align*}
Thus, we conclude the result. 
\end{proof}

\section{Proof of Theorems \ref{thm 171123-2} and \ref{thm 171219-3}}
\label{sec vector-valued}

In this section, we shall prove the Fefferman--Stein vector-valued maximal inequality for mixed spaces.
First, we define the mixed vector-valued norm and show its duality formula.

\begin{definition}[Mixed vector-valued norm]
Let $0<\vec{p}\le\infty$ and $0<q \le\infty$.
For a system
$\displaystyle \{f_j \}_{j=1}^{\infty} \subset L^0({\mathbb R}^n)$,
define
\[
\| f_j \|_{L^{\vec{p}}(\ell^q)}
\equiv
\| \{f_j \}_{j=1}^{\infty} \|_{L^{\vec{p}}(\ell^q)}
=
\left\| \left(\sum_{j=1}^{\infty} |f_j|^q\right)^{\frac1q}
 \, \right\|_{\vec{p}}. 
\]
The space $L^{\vec{p}}(\ell^q,{\mathbb R}^n)$ denotes the set
of all collections $\{f_j\}_{j=1}^\infty$
for which the quantity
$\|\{f_j \}_{j=1}^{\infty} \|_{L^{\vec{p}}(\ell^q)}$
is finite.
\index{L p l q R n@$L^p(\ell^q,{\mathbb R}^n)$}
A natural modification is made in the above when $q=\infty$.
\index{vector-valued norm@vector-valued norm}
\end{definition}

This vector-valued norm can be written by the form of duality.

\begin{lemma}
\label{lem 171123-1}
Let $1<\vec{p}\le\infty$ and $1<q \le\infty$,
and let
$\{f_j\}_{j=1}^\infty$
be a sequence of $L^{\vec{p}}({\mathbb R}^n)$-functions
such that
$f_j=0$ a.e.
if $j$ is large enough.
Then we can take
a sequence
$\{g_j\}_{j=1}^\infty$ of $L^{\vec{p'}}({\mathbb R}^n)$-functions
such that
\begin{eqnarray*}
\| f_j \|_{L^{\vec{p}}(\ell^q)}
=
\sum_{j=1}^\infty
\int_{{\mathbb R}^n} f_j(x)g_j(x){\rm d}x, \quad
\| g_j \|_{L^{\vec{p'}}(\ell^{q'})}
=1.
\end{eqnarray*}
If
$\{f_j\}_{j=1}^\infty$
is nonnegative,
then we can arrange
that
$\{g_j\}_{j=1}^\infty$
is nonnegative.
\end{lemma}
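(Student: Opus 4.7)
Since $f_j=0$ for all sufficiently large $j$, we may work with a finite family $\{f_j\}_{j=1}^N$ and extend by $g_j\equiv 0$ afterwards. Set
\[
F(x)\equiv \left(\sum_{j=1}^N|f_j(x)|^q\right)^{1/q},
\]
so $F\in L^{\vec{p}}(\mathbb{R}^n)$ and $\|F\|_{\vec{p}}=\|f_j\|_{L^{\vec{p}}(\ell^q)}$. The strategy is to realize the outer norm via duality of the mixed Lebesgue space $L^{\vec{p}}(\mathbb{R}^n)$ (Benedek--Panzone), and the inner norm via pointwise duality of $\ell^q$, then combine.

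First, I would invoke the duality of $L^{\vec{p}}(\mathbb{R}^n)$ to produce a nonnegative $G\in L^{\vec{p'}}(\mathbb{R}^n)$ with $\|G\|_{\vec{p'}}=1$, supported on $\{F>0\}$, such that
\[
\int_{\mathbb{R}^n} F(x)G(x)\,{\rm d}x=\|F\|_{\vec{p}}.
\]
Concretely the extremizer is $G=F^{(\vec{p}\,)-1}/\|F\|_{\vec{p}}^{\vec{p}-1}$, defined coordinate by coordinate as in the mixed-norm Hölder case; the dual pairing is attained because we are in the reflexive range $1<\vec{p}\le\infty$. Second, for each $x$ with $F(x)>0$, the $\ell^q$--$\ell^{q'}$ duality gives
\[
h_j(x)\equiv \frac{|f_j(x)|^{q-1}}{F(x)^{q-1}},\qquad \left(\sum_{j=1}^N h_j(x)^{q'}\right)^{1/q'}=1,\qquad \sum_{j=1}^N|f_j(x)|\,h_j(x)=F(x);
\]
on $\{F=0\}$ put $h_j\equiv 0$. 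Finally, define
\[
g_j(x)\equiv \overline{\operatorname{sgn} f_j(x)}\,h_j(x)\,G(x),
\]
where $\operatorname{sgn} z=z/|z|$ for $z\neq 0$ and $0$ otherwise.

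Verification is then routine: $\sum_j f_j(x)g_j(x)=\sum_j|f_j(x)|h_j(x)G(x)=F(x)G(x)$, so integrating yields $\sum_j\int f_j g_j=\|F\|_{\vec{p}}$. On the other hand,
\[
\left(\sum_{j=1}^N|g_j(x)|^{q'}\right)^{1/q'}=\chi_{\{F>0\}}(x)\,G(x)=G(x),
\]
since $G$ is supported in $\{F>0\}$, so $\|g_j\|_{L^{\vec{p'}}(\ell^{q'})}=\|G\|_{\vec{p'}}=1$. If $\{f_j\}$ is nonnegative then $\overline{\operatorname{sgn} f_j}\in\{0,1\}$, making each $g_j\ge 0$, which handles the last claim.

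The main obstacle is the first step, namely producing the extremizer $G$ for the outer mixed Lebesgue norm. In the classical $L^p$ case this is immediate, but in the mixed setting one must iterate the one-dimensional duality from the innermost variable outward, checking at each stage that the intermediate functions lie in the appropriate iterated Lebesgue space and that the extremizing choice is measurable in the remaining variables; this is exactly the content of Benedek--Panzone's duality theorem \cite{B-P}, which I would cite rather than reprove. The endpoint cases $p_j=\infty$ or $q=\infty$ require the usual modification (essential supremum, and $h_j$ chosen to concentrate on the argmax set), but present no further difficulty.
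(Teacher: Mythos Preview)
Your proposal is correct and follows essentially the same approach as the paper: both construct the extremizer $g_j$ as the product of the pointwise $\ell^q$--$\ell^{q'}$ extremizer $\overline{\operatorname{sgn} f_j}\,|f_j|^{q-1}/F^{q-1}$ and the $L^{\vec{p}}$--$L^{\vec{p'}}$ extremizer for $F$. The only difference is presentational: the paper writes out the combined formula in one shot (with the iterated mixed-norm extremizer spelled out explicitly as $\left(\sum_j|f_j|^q\right)^{p_1/q-1}\prod_{k=1}^n\|(\sum_j|f_j|^q)\|_{(p_1,\ldots,p_k)}^{p_{k+1}-p_k}$), whereas you factor the construction into the two dualities and cite Benedek--Panzone for the outer one.
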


\begin{proof}
There is nothing to prove if
$f_j(x)=0$
for all nonnegative integers $j$
and
for almost all $x \in {\mathbb R}^n$;
assume otherwise.
In this case,
we recall the construction of the duality
$L^{\vec{p}}({\mathbb R}^n)$-$L^{\vec{p'}}({\mathbb R}^n)$ \cite{B-P};
for $x \in \mathbb{R}^n$, set
\[
g_j(x)\equiv \overline{{\rm sgn}(f_j)(x)}\,|f_j(x)|^{q-1}\,
\left(\sum_{j=1}^\infty |f_j(x)|^q\right)^{\frac{p_1}{q}-1}
\prod_{k=1}^n
\left\|
\left(\sum_{j=1}^\infty |f_j|^q\right)
\right\|_{(p_1,\ldots,p_k)}^{p_{k+1}-p_k}(x'),
\]
where we let $p_{n+1}=1$ and $x'=(x_{k+1}, \ldots, x_n)$. Since
\[
\left(
\sum_{j=1}^\infty |g_j|^{q'}
\right)^{\frac{p'_1}{q'}}
=
\left(
\sum_{j=1}^\infty |f_j|^q
\right)^{\frac{p_1}{q}}
\prod_{k=1}^n
\left\|\left(\sum_{j=1}^\infty |f_j|^q\right)^{\frac{1}{q}}\right\|_{(p_1, \ldots, p_k)}^{p_{k+1}-p_k},
\]
we have
$\| g_j \|_{L^{\vec{p'}}(\ell^{q'})}=1$.
Furthermore, since
\[
\sum_{j=1}^\infty f_jg_j
=
\left(
\sum_{j=1}^\infty |f_j|^q
\right)^{\frac{p_1}{q}}
\prod_{k=1}^n
\left\|
\left(\sum_{j=1}^\infty |f_j|^q\right)^{\frac{1}{q}}
\right\|_{(p_1,\ldots,p_k)}^{p_{k+1}-p_k},
\]
we obtain
$\displaystyle
\| f_j \|_{L^{\vec{p}}(\ell^q)}
=
\sum_{j=1}^\infty
\int_{{\mathbb R}^n}f_j(x)g_j(x){\rm d}x.
$
\end{proof}




To prove the Fefferman--Stein vector-valued maximal inequality for $L^{\vec{p}}(\mathbb{R}^n)$,
we use the following lemma, which was proved by Bagby \cite{Bagby}. 
This lemma is the unweighted version of Lemma \ref{lem 180115-1}.

\begin{lemma}{\rm (\cite{Bagby})} \label{lem 180114-2}
Let $\{f_{(j_1, \ldots, j_m)}\}_{j_1, \ldots, j_m=1}^\infty \subset L^0(\mathbb{R}^n)$.
For $1<q_i<\infty (i=1, \ldots,m)$ and $1<p<\infty$, 
\[
\left\|\left[\left\|Mf_{(j_1,\ldots,j_m)}\right\|_{\ell^{(q_1,\ldots,q_m)}}\right]\right\|_{p}
\lesssim
\left\|\left[\left\|f_{(j_1,\ldots,j_m)}\right\|_{\ell^{(q_1,\ldots,q_m)}}\right]\right\|_{p},
\]
that is,
\begin{eqnarray*}
&&
\left\|
\left(
\sum_{j_m=1}^\infty
\left(\cdots
\sum_{j_2=1}^\infty
\left(
\sum_{j_1=1}^\infty
(M f_{(j_1,\ldots,j_m)})^{q_1}
\right)^{\frac{q_2}{q_1}}
\cdots
\right)^{\frac{q_n}{q_{m-1}}}
\right)^{\frac{1}{q_m}}\right\|_{p}\\
&&\lesssim
\left\|\left(
\sum_{j_m=1}^\infty
\left(\cdots
\sum_{j_2=1}^\infty
\left(
\sum_{j_1=1}^\infty
|f_{(j_1,\ldots,j_m)}|^{q_1}
\right)^{\frac{q_2}{q_1}}
\cdots
\right)^{\frac{q_m}{q_{m-1}}}
\right)^{\frac{1}{q_m}}\right\|_{p}.
\end{eqnarray*}
 
\end{lemma}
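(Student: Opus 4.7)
The plan is to proceed by induction on $m$, the number of indices. The base case $m=1$ is the classical unweighted Fefferman--Stein vector-valued maximal inequality: for $1<p,q_1<\infty$,
\[
\left\|\left(\sum_{j=1}^\infty (Mf_j)^{q_1}\right)^{1/q_1}\right\|_p \lesssim \left\|\left(\sum_{j=1}^\infty |f_j|^{q_1}\right)^{1/q_1}\right\|_p,
\]
which I would treat as known.

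For the inductive step, assume the conclusion holds for $(m-1)$-indexed families at every $1<p<\infty$. The natural endpoint is $p=q_m$, where Tonelli's theorem (legitimate since every term is nonnegative) yields
\[
\left\|\bigl[\|Mf_{(j_1,\ldots,j_m)}\|_{\ell^{(q_1,\ldots,q_m)}}\bigr]\right\|_{q_m}^{q_m} = \sum_{j_m=1}^\infty \left\|\bigl[\|Mf_{(j_1,\ldots,j_{m-1},j_m)}\|_{\ell^{(q_1,\ldots,q_{m-1})}}\bigr]\right\|_{q_m}^{q_m}.
\]
For each fixed $j_m$, the family $\{f_{(j_1,\ldots,j_{m-1},j_m)}\}_{j_1,\ldots,j_{m-1}}$ has $m-1$ indices, so the inductive hypothesis with outer exponent $p=q_m$ bounds each summand by the corresponding one without $M$, with a constant depending only on $q_1,\ldots,q_{m-1},q_m$ and the dimension (hence uniform in $j_m$). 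Summing in $j_m$ and reversing Tonelli closes the endpoint case $p=q_m$.

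To pass from $p=q_m$ to arbitrary $1<p<\infty$, I would invoke Rubio de Francia extrapolation (Proposition \ref{thm:161125-1}). This pushes one to establish the statement in the weighted form at the single exponent $p=q_m$ for every $W\in A_{q_m}$, which is exactly Lemma \ref{lem 180115-1}. The same induction runs verbatim in the weighted setting, with the \emph{weighted} Fefferman--Stein maximal inequality replacing the unweighted one in the base case and $L^{q_m}(W)$ replacing $L^{q_m}$ in the Tonelli step. Taking $w\equiv 1\in A_p$ in the resulting extrapolated estimate recovers the unweighted conclusion at every $p$.

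The main obstacle is precisely this last point: the inductive Tonelli argument only produces the inequality at the single exponent $p=q_m$, so there is no direct way to reach other $p$ without an external device. Extrapolation is well-suited, but it forces one to carry weights through the whole induction. A secondary technical nuisance is verifying that the constant issued by the inductive hypothesis is uniform in the frozen index $j_m$; this follows because the hypothesis delivers a bound depending only on $q_1,\ldots,q_{m-1},q_m,n$. Everything else (measurability of the iterated norms, interchange of sums and integrals) is automatic from nonnegativity.
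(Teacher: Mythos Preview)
Your proposal is correct, and it essentially reproduces the paper's own argument. The paper does not prove this unweighted lemma directly---it is cited from Bagby and described as the unweighted version of Lemma~\ref{lem 180115-1}---but the paper's proof of that weighted lemma proceeds exactly as you outline: induction on $m$, the weighted Fefferman--Stein inequality as base case, the Tonelli step at the endpoint $p=q_m$, and Rubio de Francia extrapolation (Proposition~\ref{thm:161125-1}) to reach all $1<p<\infty$; specializing to $w\equiv 1$ then recovers the unweighted statement.
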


\begin{theorem}[Fefferman--Stein vector-valued maximal inequality] \label{thm 180106-1}
Let $0<\vec{p}<\infty$, $0<u\le\infty$ and $0<t<\min(p_1, \ldots, p_n, u)$. Then,
for $\{f_k\}_{k=1}^\infty \subset L^0(\mathbb{R}^n)$,
\begin{equation*}
\left\|\left(
\sum_{k=1}^{\infty}
[\mathcal{M}_tf_k]^u
\right)^{\frac{1}{u}}\right\|_{\vec{p}}
\lesssim
\left\|\left(
\sum_{k=1}^{\infty}
|f_k|^u
\right)^{\frac{1}{u}}\right\|_{\vec{p}}.
\end{equation*}
\end{theorem}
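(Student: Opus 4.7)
The plan is to mirror the scalar proof of Theorem \ref{thm 171206-1} line-by-line, with the sequence index absorbed into the parameter measure space of Lemma \ref{lem 180113-4}.

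First I would reduce to $t=1$. Since $\mathcal{M}_t f = (M_n\cdots M_1[|f|^t])^{1/t}$ and since $\|\phi^t\|_{\vec{p}/t}=\|\phi\|_{\vec{p}}^{\,t}$ for nonnegative $\phi$, one has
\[
\left\|\left(\sum_{j=1}^\infty [\mathcal{M}_tf_j]^u\right)^{1/u}\right\|_{\vec{p}}^{\,t}
=
\left\|\left(\sum_{j=1}^\infty [\mathcal{M}_1(|f_j|^t)]^{u/t}\right)^{t/u}\right\|_{\vec{p}/t}.
\]
Setting $g_j=|f_j|^t$, $\vec{P}=\vec{p}/t$, and $U=u/t$ reduces the statement to the case $t=1$ under the assumptions $1<\vec{P}<\infty$ and $1<U\le\infty$, which are in force because $t<\min(p_1,\ldots,p_n,u)$. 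The endpoint $U=\infty$ is then immediate from the monotonicity of every $M_k$: pointwise $\sup_j\mathcal{M}_1g_j\le \mathcal{M}_1[\sup_jg_j]$, and Theorem \ref{thm 171206-1} disposes of this case.

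For the remaining main case $1<U<\infty$ and $1<\vec{P}<\infty$ I would induct on $n$. When $n=1$ the desired estimate is the classical Fefferman--Stein vector-valued maximal inequality on the line. Assume the result in dimension $n-1$. The inductive step rests on two ingredients. The first is the vector-valued bound
\[
\left\|\left(\sum_j|M_ng_j|^U\right)^{1/U}\right\|_{\vec{P}}
\lesssim
\left\|\left(\sum_j|g_j|^U\right)^{1/U}\right\|_{\vec{P}},
\]
which I would derive from Lemma \ref{lem 180113-4} by taking the ambient space to be $\mathbb{R}$ (carrying the variable $x_n$), the outer exponent to be $P_n$, and the parameter measure space to be $\Omega=\mathbb{Z}_{>0}\times\mathbb{R}^{n-1}$ equipped with the product of counting measure and Lebesgue measure, together with the mixed exponents $(U,P_1,\ldots,P_{n-1})$ arranged so that the $\ell^U$-sum occupies the innermost slot. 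The second ingredient is the inductive hypothesis applied pointwise in $x_n$ to the operator $M_{n-1}\cdots M_1$, which acts only on $(x_1,\ldots,x_{n-1})$; taking the $L^{P_n}$-norm in $x_n$ of the resulting pointwise inequality yields $\|M_{n-1}\cdots M_1g_j\|_{L^{\vec{P}}(\ell^U)}\lesssim\|g_j\|_{L^{\vec{P}}(\ell^U)}$. Concatenating the two steps gives
\[
\|\mathcal{M}_1g_j\|_{L^{\vec{P}}(\ell^U)}
=
\|M_n[M_{n-1}\cdots M_1g_j]\|_{L^{\vec{P}}(\ell^U)}
\lesssim
\|M_{n-1}\cdots M_1g_j\|_{L^{\vec{P}}(\ell^U)}
\lesssim
\|g_j\|_{L^{\vec{P}}(\ell^U)},
\]
which closes the induction.

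The main obstacle I anticipate is the bookkeeping in the application of Lemma \ref{lem 180113-4}: one must correctly identify the sequence index $j$ as one of the coordinates of the parameter measure space $\Omega$, order the mixed norm on $\Omega$ so that the spatial mixed Lebesgue factor and the $\ell^U$-summation truly combine into a legitimate $(U,P_1,\ldots,P_{n-1})$-norm, and verify the $\sigma$-finiteness and the exponent restrictions $1<U<\infty$ and $1<P_j<\infty$ required by Bagby's result---the endpoint $U=\infty$ having already been handled by the monotonicity argument above.
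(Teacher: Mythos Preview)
Your proposal is correct and is in fact a cleaner route than the one the paper takes. Both proofs share the reduction to $t=1$, the $u=\infty$ endpoint via monotonicity, and the induction on $n$ with the classical Fefferman--Stein inequality as base case. The difference lies in how the estimate $\|M_n g_j\|_{L^{\vec{P}}(\ell^U)}\lesssim\|g_j\|_{L^{\vec{P}}(\ell^U)}$ is obtained. The paper first proves it for step functions of the form $f_k(x)=\sum_{m'}\chi_{m'+[0,1]^{n-1}}(rx')f_{k,m'}(x_n)$, converts the inner $(P_1,\ldots,P_{n-1})$-norm into a mixed \emph{sequence} norm, and then invokes the discrete version of Bagby's lemma (Lemma~\ref{lem 180114-2}); for general $f_k$ it passes to the limit via the Lebesgue differentiation theorem, Fatou's lemma, and the pointwise bound $f_k^{(r)}\le M_{n-1}\cdots M_1 f_k$. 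You bypass this entire approximation by observing that Lemma~\ref{lem 180113-4} already allows arbitrary $\sigma$-finite parameter spaces, so one can take $\Omega=\mathbb{Z}_{>0}\times\mathbb{R}^{n-1}$ with exponents $(U,P_1,\ldots,P_{n-1})$ and ambient space $\mathbb{R}$ in the $x_n$-variable, and read off the desired bound in one stroke. Your approach is shorter and avoids the limiting argument; the paper's approach, on the other hand, isolates the purely discrete ingredient, which is reused verbatim in the weighted setting (Lemma~\ref{lem 180119-4} and Proposition~\ref{prop 180129-5}).
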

\begin{proof}
As we did in Theorem \ref{thm 171206-1},
we can reduce the matters to the case $t=1$ and $1<\vec{p}<\infty$.
\begin{itemize}


\item[(i)]
Let $1<u<\infty$.
We may assume that $f_k=0$ for $k \gg 1$,
so that at least we know that both sides are finite since
we already showed that ${\mathcal M}_1$ is $L^{\vec{p}}$-bounded.
We induct on $n$.
If $n=1$, then this is nothing but the Fefferman--Stein vector-valued inequality.
Assume that for all $\{g_k\}_{k=1}^\infty \subset L^0({\mathbb R}^{n-1})$
\[
\left\|\left(
\sum_{k=1}^\infty \left[M_{n-1}\cdots M_1g_k\right]^u
\right)^{\frac1u}\right\|_{(p_1, \ldots, p_{n-1})} \lesssim
\left\|\left(\sum_{k=1}^\infty |g_k|^u\right)^{\frac1u}\right\|_{(p_1, \ldots, p_{n-1})}.
\]

Assume that
$f \in L^{\vec{p}}({\mathbb R}^n)$ is a function of the form:
\[
f_k(x_1,\ldots,x_n)
=
\sum_{m' \in {\mathbb Z}^{n-1}}
\chi_{m'+[0,1]^{n-1}}(r x')f_{k,m'}(x_n),
\]
where $r>0$ and $\{f_{k,m'}\}_{m' \in {\mathbb Z}^{n-1}} \subset L^0({\mathbb R})$.
Then
\[
M_n f_k(x_1,\ldots,x_n)
=
\sum_{m' \in {\mathbb Z}^{n-1}}
\chi_{m'+[0,1]^{n-1}}(r x')M_n f_{k,m'}(x_n),
\]
since the summand is made up of at most one non-zero function
once we fix $x'$.
Define $v>0$ by
\[
\frac{1}{v}=\frac{1}{p_1}+\cdots+\frac{1}{p_{n-1}}.
\]
We observe
\begin{align*}
\left\| \left\{ M_n f_k \right\}_{k=1}^{\infty} \right\|_{L^{\vec{p}}(\ell^u)}
&=
\left\|
\left(\sum_{k=1}^{\infty}
\left[
\sum_{m' \in {\mathbb Z}^{n-1}}
\chi_{m'+[0,1]^{n-1}}(r \cdot')M_nf_{k,m'}(\cdot_n)
\right]^u\right)^\frac1u
\right\|_{\vec{p}}\\
&=
r^{-\frac{1}{v}}
\left\|
\left(\sum_{k=1}^{\infty}
\left[
\sum_{m' \in {\mathbb Z}^{n-1}}
\chi_{m'+[0,1]^{n-1}}(\cdot')M_nf_{k,m'}(\cdot_n)
\right]^u\right)^\frac1u
\right\|_{\vec{p}}.\\
\end{align*}
Setting $\vec{s}=(p_1, \ldots,p_{n-1})$, we get
\begin{align*}
&\left\|
\left(\sum_{k=1}^\infty\left[
\sum_{m' \in {\mathbb Z}^{n-1}}
\chi_{m'+[0,1]^{n-1}}(\cdot')M_nf_{k,m'}(\cdot_n)\right]^u\right)^\frac1u
\right\|_{\vec{p}}\\
&=
\left\|\left\|\left(\sum_{k=1}^\infty
\left[
\sum_{m' \in {\mathbb Z}^{n-1}}
\chi_{m'+[0,1]^{n-1}}(\cdot')M_nf_{k,m'}(\cdot_n)
\right]^u\right)^\frac1u\right\|_{\vec{s}}\right\|_{(p_n)}\\
&=
\left\|
\left[
\left\|
\left(\sum_{k=1}^\infty
\left[
M_nf_{k,m'}(\cdot_n)
\right]^u\right)^\frac1u
\right\|_{\ell^{(p_1, \ldots, p_{n-1})}}
\right]
\right\|_{(p_n)}\\
&=
\left\|
\left[
\left\|
M_nf_{k,m'}(\cdot_n)
\right\|_{\ell^{(u, p_1, \ldots, p_{n-1})}}
\right]
\right\|_{(p_n)}.
\end{align*}
Thus by Lemma \ref{lem 180114-2},
we obtain
\begin{align*}
\left\|\left(\sum_{k=1}^\infty
[M_nf_{k}]^u\right)^\frac1u\right\|_{\vec{p}}
&\lesssim
r^{-\frac{1}{v}}
\left\|
\left[
\left\|
M_nf_{k,m'}(\cdot_n)
\right\|_{\ell^{(u, p_1, \ldots, p_{n-1})}}
\right]
\right\|_{(p_n)}\\
&\lesssim
r^{-\frac{1}{v}}
\left\|
\left[
\left\|
f_{k,m'}(\cdot_n)
\right\|_{\ell^{(u, p_1, \ldots, p_{n-1})}}
\right]
\right\|_{(p_n)}\\
&=
\left\|
\left[
\left\|
\left(\sum_{k=1}^\infty
\left[
f_{k,m'}(\cdot_n)
\right]^u\right)^\frac1u
\right\|_{\ell^{(p_1, \ldots, p_{n-1})}}
\right]
\right\|_{(p_n)}\\
&=
\left\|
\left(\sum_{k=1}^\infty
|f_{k}|^u\right)^\frac1u
\right\|_{\vec{p}}.\\
\end{align*}
Here the constant is independent of $r>0$.
Let $f \in L^{\vec{p}}({\mathbb R}^n)$ be arbitrary.
Write
\[
f_k^{(r)}(x)=
\frac{1}{r^{n-1}}
\sum_{m' \in {\mathbb Z}^{n-1}}
\chi_{r m'+[0,r]^{n-1}}(x')
\int_{r m'+[0,r]^{n-1}}f_k(y',x_n)\,dy'.
\]
Thanks to the Lebesgue differentiation theorem,
\[
f_k(x', x_n)=\lim_{r \downarrow 0}f_k^{(r)}(x', x_n)
\]
for almost every $x' \in {\mathbb R}^{n-1}$.
Thus, by the Fatou lemma, we obtain
\[
M_n f_k(x) \le \liminf_{r \downarrow 0}M_n f_k^{(r)}(x).
\]
Meanwhile, for all $r>0$, since $f_k^{(r)}\le M_{n-1}\cdots M_{1}f_k$,
by induction assumption, 
\begin{align*}
\left\|\left(\sum_{k=1}^\infty|f_k^{(r)}|^u\right)^\frac1u\right\|_{\vec{p}}
\le
\left\|\left(\sum_{k=1}^\infty[M_{n-1}\cdots M_{1}f_k]^u\right)^\frac1u\right\|_{\vec{p}}
\lesssim
\left\|\left(\sum_{k=1}^\infty|f_k|^u\right)^\frac1u\right\|_{\vec{p}},
\end{align*}
where $\vec{s}=(p_1,\ldots, p_{n-1})$.
As a consequence,
by the Lebesgue differentiation theorem and the Fatou lemma,
we obtain
\begin{align*}
\left\|\left(\sum_{k=1}^\infty[M_n f_k]^u\right)^\frac1u\right\|_{\vec{p}}
&\le\liminf_{r \downarrow 0}
\left\|\left(\sum_{k=1}^\infty[M_n f_k^{(r)}]^u\right)^\frac1u\right\|_{\vec{p}}\\
&\lesssim\liminf_{r \downarrow 0}
\left\|\left(\sum_{k=1}^\infty|f_k^{(r)}|^u\right)^\frac1u\right\|_{\vec{p}}
\le
\left\|\left(\sum_{k=1}^\infty|f_k|^u\right)^\frac1u\right\|_{\vec{p}}.
\end{align*}
\end{itemize}
Therefore, by induction assumption,
\begin{align*}
\left\|\left(\sum_{k=1}^\infty[M_nM_{n-1}\cdots M_1 f_k]^u\right)^\frac1u\right\|_{\vec{p}}
&=
\left\|\left(\sum_{k=1}^\infty[M_n(M_{n-1}\cdots M_1 f_k)]^u\right)^\frac1u\right\|_{\vec{p}}\\
&\lesssim
\left\|\left(\sum_{k=1}^\infty[M_{n-1}\cdots M_1 f_k]^u\right)^\frac1u\right\|_{\vec{p}}\\
&\lesssim
\left\|\left(\sum_{k=1}^\infty|f_k|^u\right)^\frac1u\right\|_{\vec{p}}.
\end{align*}
\item[(ii)]
Let $u=\infty$. Then, simply using
\[
\sup_{k\in \mathbb{N}} \mathcal{M}_1f_k
\le
\mathcal{M}_1\left[\sup_{k\in \mathbb{N}} f_k\right],
\]
we get the result.
\end{proof}



We can also show the vector-valued inequality for the Hardy--Littlewood maximal operator 
in mixed Morrey spaces. 

\begin{theorem} 
Let $1<\vec{q}<\infty$, $1<u\le\infty$, and $1<p\le\infty$ satisfy $\frac np \le\sum_{j=1}^n\frac1q_j$.
Then, for every sequence $\{f_j\}_{j=1}^{\infty} \in L^0(\mathbb{R}^n)$,
\begin{equation*}
\left\|\left(
\sum_{j=1}^{\infty}
[Mf_j]^u
\right)^{\frac{1}{u}}\right\|_{\mathcal{M}^p_{\vec{q}}(\mathbb{R}^n)}
\lesssim
\left\|\left(
\sum_{j=1}^{\infty}
|f_j|^u
\right)^{\frac{1}{u}}\right\|_{\mathcal{M}^p_{\vec{q}}(\mathbb{R}^n)}.
\end{equation*}
\end{theorem}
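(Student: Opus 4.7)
I will mimic the scalar argument of Theorem \ref{thm 180122-1}, upgraded to the vector-valued setting with the help of Theorem \ref{thm 171123-2} (the already-established vector-valued inequality on mixed Lebesgue spaces) and Proposition \ref{ex 171103-1}. First, the case $u=\infty$ is disposed of at once: since $\sup_j Mf_j(x)\le M(\sup_j|f_j|)(x)$ pointwise, Theorem \ref{thm 180122-1} applied to $\sup_j |f_j|$ yields the claim. Hence I may assume $1<u<\infty$. Write $F:=(\sum_j|f_j|^u)^{1/u}$ and fix a cube $Q$. Decomposing $f_j=f_j\chi_{5Q}+f_j\chi_{(5Q)^c}=:f_j^{(1)}+f_j^{(2)}$, sublinearity of $M$ and Minkowski's inequality in $\ell^u$ reduce the task to estimating each of
\[
\bigl\|\bigl(\textstyle\sum_j[Mf_j^{(1)}]^u\bigr)^{1/u}\chi_Q\bigr\|_{\vec q}\quad\text{and}\quad
\bigl\|\bigl(\textstyle\sum_j[Mf_j^{(2)}]^u\bigr)^{1/u}\chi_Q\bigr\|_{\vec q}
\]
by $|Q|^{-1/p+(1/n)\sum 1/q_j}\|F\|_{\mathcal{M}^p_{\vec q}}$ and then taking the supremum over $Q$.

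For the \emph{local} part I use the pointwise domination $M g\le \mathcal{M}_1 g$ together with Theorem \ref{thm 171123-2} at $t=1$ applied to $\{f_j^{(1)}\}$:
\[
\bigl\|\bigl(\textstyle\sum_j[Mf_j^{(1)}]^u\bigr)^{1/u}\chi_Q\bigr\|_{\vec q}\le
\bigl\|\bigl(\textstyle\sum_j[\mathcal{M}_1f_j^{(1)}]^u\bigr)^{1/u}\bigr\|_{\vec q}\lesssim\|F\chi_{5Q}\|_{\vec q}.
\]
Multiplying by $|Q|^{1/p-(1/n)\sum 1/q_j}\sim|5Q|^{1/p-(1/n)\sum 1/q_j}$ yields the bound by $\|F\|_{\mathcal{M}^p_{\vec q}}$.

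For the \emph{non-local} part Proposition \ref{ex 171103-1} gives, for $y\in Q$,
$Mf_j^{(2)}(y)\lesssim \sup_{R\supset Q}\frac{1}{|R|}\int_R|f_j|$.
Every cube $R\supset Q$ is contained in a dyadic dilate $Q_k:=Q(c(Q),2^k\ell(Q))$ of volume $2^{n(k+1)}|Q|$ comparable to $|R|$, so the supremum is $\lesssim \sup_{k\ge 0}A_{j,k}$ with $A_{j,k}:=\frac{1}{|Q_k|}\int_{Q_k}|f_j|$. The crucial step is the crude pointwise inequality $\sup_k A_{j,k}^u\le\sum_k A_{j,k}^u$, which decouples the scale index from the $\ell^u$-index. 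For each fixed $k$, Minkowski's integral inequality (valid since $u\ge 1$) followed by H\"older on $L^{\vec q}$--$L^{\vec q'}$ and the definition of the Morrey norm gives
\[
\Bigl(\sum_j A_{j,k}^u\Bigr)^{1/u}\le \frac{1}{|Q_k|}\int_{Q_k}F\le |Q_k|^{-(1/n)\sum 1/q_j}\|F\chi_{Q_k}\|_{\vec q}\le |Q_k|^{-1/p}\|F\|_{\mathcal{M}^p_{\vec q}}.
\]
Summing over $k\ge 0$ uses the geometric series $\sum_{k\ge 0}2^{-un(k+1)/p}<\infty$ (valid since $p<\infty$), producing $(\sum_j\sup_k A_{j,k}^u)^{1/u}\lesssim|Q|^{-1/p}\|F\|_{\mathcal{M}^p_{\vec q}}$. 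Since the resulting majorant for $(\sum_j[Mf_j^{(2)}]^u)^{1/u}$ is constant on $Q$, multiplying by $\|\chi_Q\|_{\vec q}=|Q|^{(1/n)\sum 1/q_j}$ and by $|Q|^{1/p-(1/n)\sum 1/q_j}$ produces the required $\|F\|_{\mathcal{M}^p_{\vec q}}$ bound. (For $p=\infty$ one reduces to $\vec q=\vec\infty$ and treats the case directly.)

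The main obstacle is exactly the non-local estimate. Naively bounding each $Mf_j^{(2)}(y)$ by the \emph{constant} $c_j:=\sup_{R\supset Q}\frac{1}{|R|}\int_R|f_j|$ and then forming $(\sum_j c_j^u)^{1/u}$ fails because the suprema in $j$ are typically attained at different $R$, so Minkowski's integral inequality cannot be invoked to commute the $\ell^u$-norm with a single average. The trick is to discretise the scale via the cubes $Q_k$, sacrifice sharpness by $\sup_k a_k^u\le\sum_k a_k^u$, and recover convergence from the genuine geometric decay $|Q_k|^{-1/p}=2^{-n(k+1)/p}|Q|^{-1/p}$ provided by the Morrey scaling; this is precisely where the hypothesis $p<\infty$ enters.
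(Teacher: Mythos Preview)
Your argument is correct for $1<p<\infty$ and follows the same local/non-local split over $5Q$ as the paper; the treatment of $u=\infty$ and of the local piece (via Theorem~\ref{thm 171123-2}) is identical to the paper's. The non-local pieces, however, genuinely diverge. After Proposition~\ref{ex 171103-1}, the paper partitions each dilate $2^\ell Q$ into translates $Q^{(k)}$ of $Q$, bounds $Mf_{j,2}(y)\lesssim \sup_\ell\max_k |Q^{(k)}|^{-1}\int_{Q^{(k)}}|f_j|$, and then passes (labelled ``Minkowski'') to $\sup_\ell\max_k\bigl(\sum_j(\cdot)^u\bigr)^{1/u}$, i.e.\ it pulls the $\sup_\ell\max_k$ \emph{outside} the $\ell^u_j$-norm. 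That step is precisely the pitfall you isolate in your closing paragraph, and in general the inequality goes the wrong way. Your remedy---replacing $\sup_k$ by $\sum_k$ over the dyadic dilates $Q_k$ and then exploiting the geometric decay $\sum_{k\ge0}|Q_k|^{-u/p}<\infty$---is rigorous and in effect repairs the paper's argument. The cost is the restriction $p<\infty$, which the paper's (formal) route does not incur. Your parenthetical on $p=\infty$ is imprecise: the condition $p=\infty$ does not force $\vec q=\vec\infty$; rather one checks directly that $\mathcal M^\infty_{\vec q}(\mathbb R^n)=L^\infty(\mathbb R^n)$ for every admissible $\vec q$, so the endpoint reduces to the $L^\infty(\ell^u)$ estimate for $M$, which should be handled by a separate short argument rather than by the geometric-series step.
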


\begin{proof}
\begin{itemize}
\item[(i)]
Let $u=\infty$. Then, simply using
\[
\sup_{j\in \mathbb{N}} Mf_j
\le
M\left[\sup_{j\in \mathbb{N}} f_j\right],
\]
we get the result.

\item[(ii)]
Let $1<u<\infty$.
We have to show that
\[
|Q|^{\frac{1}{p}-\frac{1}{n}\left(\sum_{j=1}^n\frac{1}{q_j}\right)}
\left\|\left(
\sum_{j=1}^{\infty}
[Mf_j]^u
\right)^{\frac{1}{u}}\chi_Q\right\|_{\vec{q}}
\lesssim
\left\|\left(
\sum_{j=1}^{\infty}
|f_j|^u
\right)^{\frac{1}{u}}\right\|_{\mathcal{M}^p_{\vec{q}}(\mathbb{R}^n)}.
\]
Let $f_{j,1}=f_j\chi_{5Q}$ and $f_{j,2}=f_j-f_{j,1}$.
Using subadditivity of $M$, we have
\begin{align*}
\left\|\left(
\sum_{j=1}^{\infty}
[Mf_j]^u
\right)^{\frac{1}{u}}\chi_Q\right\|_{\vec{q}}
&\le
\left\|\left(
\sum_{j=1}^{\infty}
[Mf_{j,1}]^u
\right)^{\frac{1}{u}}\chi_Q\right\|_{\vec{q}}
+\left\|\left(
\sum_{j=1}^{\infty}
[Mf_{j,2}]^u
\right)^{\frac{1}{u}}\chi_Q\right\|_{\vec{q}}\\
&\equiv
J_1+J_2.
\end{align*}
First, using Theorem \ref{thm 171123-2}, we have
\begin{align*}
|Q|^{\frac{1}{p}-\frac{1}{n}\left(\sum_{j=1}^n\frac{1}{q_j}\right)}J_1
&\le
|Q|^{\frac{1}{p}-\frac{1}{n}\left(\sum_{j=1}^n\frac{1}{q_j}\right)}
\left\|\left(
\sum_{j=1}^{\infty}
[Mf_{j,1}]^u
\right)^{\frac{1}{u}}\right\|_{\vec{q}}\\
&\lesssim
|Q|^{\frac{1}{p}-\frac{1}{n}\left(\sum_{j=1}^n\frac{1}{q_j}\right)}
\left\|\left(
\sum_{j=1}^{\infty}
|f_{j,1}|^u
\right)^{\frac{1}{u}}\right\|_{\vec{q}}\\
&=
|Q|^{\frac{1}{p}-\frac{1}{n}\left(\sum_{j=1}^n\frac{1}{q_j}\right)}
\left\|\left(
\sum_{j=1}^{\infty}
|f_j|^u
\right)^{\frac{1}{u}}\chi_{5Q}\right\|_{\vec{q}}\\
&\lesssim
\left\|\left(
\sum_{j=1}^{\infty}
|f_j|^u
\right)^{\frac{1}{u}}\right\|_{\mathcal{M}^p_{\vec{q}}(\mathbb{R}^n)}.
\end{align*} 
Second, let $y \in Q$. By Proposition \ref{ex 171103-1},
\[
Mf_{j,2}(y) \lesssim \sup_{Q\subset R} \frac{1}{|R|}\int_{R}|f_j(y)|{\rm d}y
\lesssim
\sup_{\ell \in \mathbb{N}}\frac{1}{|2^{\ell}Q|}\int_{2^{\ell}Q}|f_j(y)|{\rm d}y.
\] 
We decompose
\[
2^{\ell}Q=\bigcup_{k=1}^{2^{\ell}}Q^{(k)}, \quad |Q^{(k)}|=|Q|.
\]
Thus,
\[
Mf_{j,2}(y) \lesssim
\sup_{\ell \in \mathbb{N}}\sum_{j=1}^{2^{\ell}}\frac{1}{|2^{\ell}Q|}\int_{Q^{(k)}}|f_j(y)|{\rm d}y
\le
\sup_{\ell \in \mathbb{N}}\max_{k=1, \ldots, 2^{\ell}}\frac{1}{|Q^{(k)}|}\int_{Q^{(k)}}|f_j(y)|{\rm d}y.
\]
Using Minkowski's inequality, we get
\begin{align*}
\left(\sum_{j=1}^{\infty}Mf_{j,2}(y)^u\right)^{\frac{1}{u}}
&\lesssim
\sup_{\ell \in \mathbb{N}}\max_{k=1, \ldots, 2^{\ell}}\frac{1}{|Q^{(k)}|}
\left[\sum_{j=1}^{\infty}\left(\int_{Q^{(k)}}|f_j(y)|{\rm d}y\right)^u\right]^{\frac{1}{u}}\\
&\le
\sup_{\ell \in \mathbb{N}}\max_{k=1, \ldots, 2^{\ell}}\frac{1}{|Q^{(k)}|}
\int_{Q^{(k)}}\left(\sum_{j=1}^{\infty}|f_j(y)|^u\right)^{\frac{1}{u}}{\rm d}y.\\
\end{align*}
Multiplying $\chi_Q$ and taking $L^{\vec{q}}$-norm, we have
\[
\left\|\left(\sum_{j=1}^{\infty}(Mf_{j,2})^u\right)^{\frac{1}{u}}\chi_Q\right\|_{\vec{q}}
\lesssim
\sup_{\ell \in \mathbb{N}}\max_{k=1, \ldots, 2^{\ell}}\frac{1}{|Q^{(k)}|}
\int_{Q^{(k)}}\left(\sum_{j=1}^{\infty}|f_j(y)|^u\right)^{\frac{1}{u}}{\rm d}y \times \|\chi_Q\|_{\vec{q}}.
\]
Therefore, using relation 
${\mathcal M}^p_{\vec{q}}({\mathbb R}^n) 
\hookrightarrow {\mathcal M}^p_{(\underbrace{1, \ldots, 1}_{\mbox{$n$ times}})}({\mathbb R}^n)
= {\mathcal M}^p_1({\mathbb R}^n)$, we obtain
\begin{align*}
&|Q|^{\frac{1}{p}-\frac{1}{n}\left(\sum_{j=1}^n\frac{1}{q_j}\right)}J_2\\
&\lesssim
|Q|^{\frac{1}{p}-\frac{1}{n}\left(\sum_{j=1}^n\frac{1}{q_j}\right)}
\sup_{\ell \in \mathbb{N}}\max_{k=1, \ldots, 2^{\ell}}\frac{1}{|Q^{(k)}|}
\int_{Q^{(k)}}\left(\sum_{j=1}^{\infty}|f_j(y)|^u\right)^{\frac{1}{u}}{\rm d}y
\times |Q|^{\frac{1}{n}\left(\sum_{j=1}^n\frac{1}{q_j}\right)}\\
&=
\sup_{\ell \in \mathbb{N}}\max_{k=1, \ldots, 2^{\ell}}
|Q^{(k)}|^{\frac{1}{p}-1}\int_{Q^{(k)}}\left(\sum_{j=1}^{\infty}|f_j(y)|^u\right)^{\frac{1}{u}}{\rm d}y\\
&\le
\left\|\left(\sum_{j=1}^{\infty}|f_j|^u\right)^{\frac{1}{u}}\right\|_{\mathcal{M}^p_1(\mathbb{R}^n)}
\le
\left\|\left(\sum_{j=1}^{\infty}|f_j|^u\right)^{\frac{1}{u}}\right\|_{\mathcal{M}^p_{\vec{q}}(\mathbb{R}^n)}.
\end{align*}
\end{itemize}
Thus, the result holds.
\end{proof}

We can also prove the Fefferman--Stein vector-valued inequality for the iterated maximal operator in mixed Morrey spaces.
The way is similar to Theorem \ref{thm 180119-1}.
First, we prepare the following proposition, which is vector-valued case for Proposition \ref{prop 180119-6}.

\begin{proposition}\label{prop 180129-5}
Let $1<\vec{q}<\infty$ and $w_k \in A_{q_k}(\mathbb{R})$ for $k=1,\ldots,n$.
Then, 
\[
\left\|
\left(\sum_{j=1}^\infty
[\mathcal{M}_1f_j]^u
\right)^\frac1u
\cdot\bigotimes_{k=1}^nw_{k}^{\frac{1}{q_k}}
\right\|_{\vec{q}}
\lesssim
\left\|
\left(\sum_{j=1}^\infty
|f_j|^u
\right)^\frac1u
\cdot\bigotimes_{k=1}^nw_{k}^{\frac{1}{q_k}}
\right\|_{\vec{q}},
\]
for $f \in L^0(\mathbb{R}^n)$.
\end{proposition}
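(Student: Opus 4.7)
The plan is to adapt the proof of Proposition \ref{prop 180119-6} to the vector-valued setting, inducting on $n$. The base case $n=1$ is the weighted vector-valued Fefferman--Stein inequality on $L^{q_1}(\mathbb{R}, w_1)$, which is the scalar result of Andersen--John \cite{A-J}.

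For the inductive step, the key intermediate tool I would establish is a vector-valued analogue of Lemma \ref{lem 180119-4}: for $w_n \in A_{p_n}(\mathbb{R})$, $1 < \vec{p} \le \infty$, and any sequence $\{f_j\}_{j=1}^\infty \subset L^0(\mathbb{R}^n)$,
\begin{align*}
\left\|\left[\left\|\left(\sum_{j=1}^\infty [M_n f_j]^u\right)^\frac{1}{u}\right\|_{(p_1,\ldots,p_{n-1})}\right] w_n(\cdot_n)^\frac{1}{p_n}\right\|_{(p_n)}
\lesssim \left\|\left[\left\|\left(\sum_{j=1}^\infty |f_j|^u\right)^\frac{1}{u}\right\|_{(p_1,\ldots,p_{n-1})}\right] w_n(\cdot_n)^\frac{1}{p_n}\right\|_{(p_n)}.
\end{align*}
To prove this, I would mimic the two-step argument used for Lemma \ref{lem 180119-4}. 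First, for each $f_j$ of the tensor form
\[
f_j(x_1,\ldots,x_n)=\sum_{m'\in\mathbb{Z}^{n-1}}\chi_{m'+[0,1]^{n-1}}(rx')f_{j,m'}(x_n),
\]
the operator $M_n$ acts only in $x_n$ and does not mix the indicator functions, so by Example \ref{ex 180112-1} and the dilation identity \eqref{eq 171113-1} the mixed $L^{(p_1,\ldots,p_{n-1})}$ norm in $x'$ reduces to the mixed sequence norm $\ell^{(p_1,\ldots,p_{n-1})}$ in $m'$. Treating $j$ as an additional sequence index carrying $\ell^u$, the desired inequality then follows from Lemma \ref{lem 180115-1} applied with the mixed sequence norm $\ell^{(u,p_1,\ldots,p_{n-1})}$ and the scalar $A_{p_n}$ weight $w_n$ in the remaining variable $x_n$. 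Second, for a general $f_j$, I approximate by its averages $f_j^{(r)}$ over the grid $r\mathbb{Z}^{n-1}$ in $x'$ as in Lemma \ref{lem 180119-4}; the Lebesgue differentiation theorem combined with Fatou's lemma (Proposition \ref{prop 171225-1}) transfers the inequality to $\{f_j\}$, with uniform control over $r$ provided by Theorem \ref{thm 180106-1} applied to the domination $f_j^{(r)}\le M_{n-1}\cdots M_1 f_j$.

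Granted this vector-valued Lemma \ref{lem 180119-4}, the induction runs exactly as in Proposition \ref{prop 180119-6}: write $\mathcal{M}_1 f_j = M_n(M_{n-1}\cdots M_1 f_j)$, use the above to peel off $M_n$ together with the weight $w_n^{1/p_n}$, and then invoke the inductive hypothesis on $(p_1,\ldots,p_{n-1})$, $(w_1,\ldots,w_{n-1})$ to handle $M_{n-1}\cdots M_1$. The two applications chain together to produce the claimed bound.

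The main obstacle is the vector-valued analogue of Lemma \ref{lem 180119-4}: Lemma \ref{lem 180115-1} is phrased for the full Hardy--Littlewood maximal operator $M$ on $\mathbb{R}^n$ rather than the partial maximal $M_n$, so it cannot be applied directly. The trick is that once one restricts to tensor-structured functions, the action of $M_n$ on $x_n$ decouples entirely from the $x'$-indicators, so $M_n$ becomes essentially the one-dimensional maximal operator applied to an $\ell^{(u,p_1,\ldots,p_{n-1})}$-valued sequence, which is exactly the scope of Lemma \ref{lem 180115-1}. The approximation argument handling general $f_j$ requires some care to ensure the truncation does not destroy integrability; this is where Theorem \ref{thm 180106-1} provides the needed uniform majorant.
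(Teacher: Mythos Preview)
Your proposal is correct and follows essentially the same route as the paper: induction on $n$, with the base case given by the weighted Fefferman--Stein inequality (Lemma \ref{lem 180115-1}/Andersen--John), and the inductive step peeling off $M_n$ together with $w_n^{1/q_n}$ before invoking the hypothesis on the remaining $n-1$ variables. The paper's proof simply cites the scalar Lemma \ref{lem 180119-4} at the peeling step, whereas you correctly point out that a vector-valued analogue is what is actually needed and you supply its proof via the discretization/approximation argument with Lemma \ref{lem 180115-1} carrying the extra $\ell^u$ index; this makes your write-up more precise than the paper's at that step, but the underlying strategy is the same.
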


\begin{proof}
We induct on $n$.
Let $n=1$. Then, this is clear by Lemma \ref{lem 180115-1}. 
Suppose that the result holds for $n-1$, that is, 
\begin{align*}
\left\|
\left(\sum_{j=1}^\infty
[M_{n-1}\cdots M_1h_j]^u
\right)^\frac1u
\cdot
\bigotimes_{k=1}^{n-1}w_{k}^{\frac{1}{q_k}}
\right\|_{(q_1, \ldots, q_{n-1})}\\
\lesssim
\left\|
\left(\sum_{j=1}^\infty
|h_j|^u
\right)^\frac1u
\cdot\bigotimes_{k=1}^{n-1}w_{k}^{\frac{1}{q_k}}
\right\|_{(q_1, \ldots, q_{n-1})},
\end{align*}
for $h_j \in L^0(\mathbb{R}^{n-1})$.
Then, again by Lemma \ref{lem 180119-4}, 
\begin{align}\label{eq 180517-2}
&\left\|
\left(\sum_{j=1}^\infty
[\mathcal{M}_1f_j]^u
\right)^\frac1u
\cdot\bigotimes_{k=1}^nw_{k}^{\frac{1}{q_k}}
\right\|_{\vec{q}}\nonumber \\
&\lesssim
\left\|
\left[\left\|
\left(\sum_{j=1}^\infty
[M_{n-1}\cdots M_1f_j]^u
\right)^\frac1u \cdot\bigotimes_{k=1}^{n-1}w_{k}^{\frac{1}{q_k}}
\right\|_{(q_1, \ldots, q_{n-1})}\right]
w_n(\cdot_n)^\frac{1}{q_n}
\right\|_{(q_n)}.
\end{align}
Thus, by induction hypothesis, 
\begin{align*}
&\mbox{the right-hand side of (\ref{eq 180517-2})}\\
&=
\left\|
\left[\left\|
\left(\sum_{j=1}^\infty
[M_{n-1}\cdots M_1f_j]^u
\right)^\frac1u 
\cdot\bigotimes_{k=1}^{n-1}w_{k}^{\frac{1}{q_k}}
\right\|_{(q_1, \ldots, q_{n-1})}\right]
w_n(\cdot_n)^\frac{1}{q_n}\right\|_{(q_n)}\\
&\lesssim
\left\|\left[
\left\|
\left(\sum_{j=1}^\infty
|f_j|^u
\right)^\frac1u
\cdot\bigotimes_{k=1}^{n-1}w_{k}^{\frac{1}{q_k}}
\right\|_{(q_1, \ldots, q_{n-1})}\right]
w_n(\cdot_n)^\frac{1}{q_n}\right\|_{(q_n)}\\
&=
\left\|
\left(\sum_{j=1}^\infty
|f_j|^u
\right)^\frac1u
\cdot\bigotimes_{k=1}^{n}w_{k}^{\frac{1}{q_k}}
\right\|_{\vec{q}}.
\end{align*}
\end{proof}

\begin{theorem} \label{thm 180129-1}
Let $0<\vec{q}\le\infty$ and $0<p<\infty$ satisfy 
\[
\frac np \le\sum_{j=1}^n\frac1q_j,
\quad
\frac{n-1}{n}p<\max(q_1,\ldots,q_n).
\]
 If $0<t<\min(q_1, \ldots, q_n, p)$, then
\begin{equation*}
\left\|\left(\sum_{j=1}^\infty
[{\mathcal M}_tf_j]^u\right)^\frac1u
\right\|_{\mathcal{M}^p_{\vec{q}}(\mathbb{R}^n)} 
\lesssim \left\|\left(\sum_{k=1}^\infty|f_j|^u\right)^\frac1u\right\|_{\mathcal{M}^p_{\vec{q}}(\mathbb{R}^n)}
\end{equation*}
for all $f \in \mathcal{M}^p_{\vec{q}}(\mathbb{R}^n)$.
\end{theorem}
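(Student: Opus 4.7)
The plan is to mirror the argument for the scalar endpoint Theorem \ref{thm 180119-1}, replacing the single function by the $\ell^u$-sum and Proposition \ref{prop 180119-6} by its vector-valued counterpart Proposition \ref{prop 180129-5}. First I would carry out the standard rescaling reduction. Writing $g_j=|f_j|^t$ and using $\mathcal{M}_t f_j=(M_n\cdots M_1 g_j)^{1/t}$ together with the identities $\|h^t\|_{\mathcal{M}^{p/t}_{\vec{q}/t}}=\|h\|_{\mathcal{M}^p_{\vec{q}}}^t$ and $\|(\sum_j h_j^{u/t})^{t/u}\|=\|(\sum_j h_j)^{1/(u/t)}\|^{t}$ (pointwise), one reduces matters to $t=1$, $1<\vec{q}\le\infty$, $1<p<\infty$, and $1<u\le\infty$ with the same two Morrey hypotheses preserved (the condition $\frac{n-1}{n}p<\max q_j$ rescales to $\frac{n-1}{n}(p/t)<\max(q_j/t)$). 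The endpoint $u=\infty$ is then immediate from the pointwise inequality $\sup_j \mathcal{M}_1 f_j\le \mathcal{M}_1[\sup_j f_j]$ together with Theorem \ref{thm 180119-1}.

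For the main case $1<u<\infty$, I would fix an exponent $\eta\in\mathbb{R}$ satisfying
\[
0<\sum_{j=1}^n\frac{1}{q_j}-\frac{n}{p}<\eta<\frac{1}{\max(q_1,\ldots,q_n)},
\]
which is possible precisely because $\frac{n-1}{n}p<\max q_j$. Applying Proposition \ref{prop 180119-5} to the scalar function $F=(\sum_j[\mathcal{M}_1 f_j]^u)^{1/u}$, it suffices to prove the weighted mixed-Lebesgue estimate
\[
\left\|\left(\sum_{j=1}^\infty[\mathcal{M}_1 f_j]^u\right)^{1/u}(\mathcal{M}_1\chi_Q)^\eta\right\|_{\vec{q}}
\lesssim
\left\|\left(\sum_{j=1}^\infty|f_j|^u\right)^{1/u}(\mathcal{M}_1\chi_Q)^\eta\right\|_{\vec{q}}
\]
uniformly in the cube $Q=I_1\times\cdots\times I_n$.

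At this point I would factor the weight as in the scalar proof: since
\[
(\mathcal{M}_1\chi_Q)^\eta=\bigotimes_{j=1}^n(M_j\chi_{I_j})^\eta,
\]
the one-dimensional weights $w_k=(M_k\chi_{I_k})^{\eta q_k}$ satisfy $w_k\in A_1(\mathbb{R})\subset A_{q_k}(\mathbb{R})$, because $0\le\eta q_k<1$ by our choice of $\eta$. Proposition \ref{prop 180129-5}, applied with these weights, gives exactly the displayed weighted inequality. Taking the supremum over $Q$ and invoking Proposition \ref{prop 180119-5} once more finishes the argument.

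The main technical obstacle is the first step: one must verify carefully that the $t\to 1$ rescaling is valid simultaneously for the mixed Morrey norm \emph{and} for the $\ell^u$ aggregation, i.e.\ that both sides of the claimed inequality transform consistently under $f_j\mapsto|f_j|^t$. Everything else is a direct transcription of the scalar proof once Proposition \ref{prop 180129-5} is in place.
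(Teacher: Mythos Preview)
Your proposal is correct and follows essentially the same route as the paper: reduce to $t=1$ via rescaling, choose $\eta$ in the interval $\bigl(\sum_j 1/q_j-n/p,\;1/\max q_j\bigr)$, factor $(\mathcal{M}_1\chi_Q)^\eta=\bigotimes_j(M_j\chi_{I_j})^\eta$ so that each $(M_j\chi_{I_j})^{\eta q_j}\in A_1\subset A_{q_j}$, invoke Proposition~\ref{prop 180129-5} to obtain the weighted $L^{\vec{q}}$ inequality, and close with Proposition~\ref{prop 180119-5}. Your explicit treatment of the rescaling and of the endpoint $u=\infty$ is slightly more detailed than the paper's, but the argument is otherwise identical.
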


\begin{proof}
We have only to check for $t=1, 1<p<\infty$ and $1<\vec{q}<\infty$
as we did in Theorem \ref{thm 171206-1}.
For $\eta \in \mathbb{R}$ satisfying
\begin{equation} \label{eq 180129-2}
0<\sum_{j=1}^n \frac{1}{q_j}-\frac{n}{p}<\eta<\frac{1}{\max(q_1,\ldots,q_n)},
\end{equation}
once we show  
\begin{equation}\label{eq 180129-3}
\left\|\left(\sum_{j=1}^\infty
[{\mathcal M}_1f_j]^u\right)^\frac1u(\mathcal{M}_1\chi_Q)^\eta\right\|_{\vec{q}}
\lesssim
\left\|\left(\sum_{k=1}^\infty|f_j|^u\right)^\frac1u(\mathcal{M}_1\chi_Q)^\eta\right\|_{\vec{q}},
\end{equation}
we get
\begin{align*}
|Q|^{\frac1p-\frac1n\sum_{j=1}^n \frac{1}{q_j}}
&\left\|\left(\sum_{j=1}^\infty
[{\mathcal M}_1f_j]^u\right)^\frac1u(\mathcal{M}_1\chi_Q)^\eta\right\|_{\vec{q}}\\
&\lesssim
|Q|^{\frac1p-\frac1n\sum_{j=1}^n \frac{1}{q_j}}
\left\|\left(\sum_{k=1}^\infty|f_j|^u\right)^\frac1u(\mathcal{M}_1\chi_Q)^\eta\right\|_{\vec{q}}.
\end{align*}
Taking supremum for all cubes and using Proposition \ref{prop 180119-5}, we conclude the result.

We shall show (\ref{eq 180129-3}).
Let $Q=I_1 \times I_2 \times \cdots \times I_n$. Then, 
\[
(\mathcal{M}_1\chi_Q)^\eta
=
\left(\bigotimes_{j=1}^nM_j\chi_{I_j}\right)^\eta
=
\bigotimes_{j=1}^n\left(M_j\chi_{I_j}\right)^\eta.
\]
Here, $(M_j\chi_{I_j})^{\eta q_j}$ is $A_1$-weight if and only if 
\begin{equation}\label{eq 180129-4}
0\le\eta q_j<1.
\end{equation} 

Since $(M_j\chi_{I_j})^{\eta q_j} \in A_1 \subset A_{q_j}$ for all $q_j$,
by Proposition \ref{prop 180129-5}, we obtain
\begin{align*}
\left\|\left(\sum_{j=1}^\infty
[{\mathcal M}_1f_j]^u\right)^\frac1u(\mathcal{M}_1\chi_Q)^\eta\right\|_{\vec{q}}
&=
\left\|\left(\sum_{j=1}^\infty
[{\mathcal M}_1f_j]^u\right)^\frac1u\bigotimes_{j=1}^n\left(M_j\chi_{I_j}\right)^\eta\right\|_{\vec{q}}\\
&\lesssim
\left\|\left(\sum_{k=1}^\infty|f_j|^u\right)^\frac1u\bigotimes_{j=1}^n\left(M_j\chi_{I_j}\right)^\eta\right\|_{\vec{q}}\\
&=
\left\|\left(\sum_{k=1}^\infty|f_j|^u\right)^\frac1u(\mathcal{M}_1\chi_Q)^\eta\right\|_{\vec{q}}.
\end{align*}
Thus, (\ref{eq 180129-3}) holds.

\end{proof}

\begin{corollary}
Let 
\begin{equation*}
0<\frac{n-1}{n}p<q\le p<\infty.
\end{equation*}
 If $0<t<q$, then
\[
\left\|\left(\sum_{j=1}^\infty
[{\mathcal M}_tf_j]^u\right)^\frac1u
\right\|_{\mathcal{M}^p_q(\mathbb{R}^n)} 
\lesssim 
\left\|
\left(\sum_{j=1}^\infty
|f_j|^u\right)^\frac1u
\right\|_{\mathcal{M}^p_q(\mathbb{R}^n)}
\]
for all $f \in \mathcal{M}^p_q(\mathbb{R}^n)$.

\end{corollary}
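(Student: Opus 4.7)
The plan is to obtain this corollary as a direct specialization of Theorem \ref{thm 180129-1} to the diagonal case $q_1 = q_2 = \cdots = q_n = q$. All the analytic content has already been established in Section \ref{sec vector-valued}; what is left is a routine verification that the hypotheses of the mixed-space theorem hold in this setting, followed by an identification of norms.

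First I would set $\vec{q} = (q, q, \ldots, q)$ and check the three hypotheses of Theorem \ref{thm 180129-1}. The condition $\frac{n}{p} \le \sum_{j=1}^n \frac{1}{q_j}$ collapses to $\frac{n}{p} \le \frac{n}{q}$, which is exactly $q \le p$ and is assumed. The condition $\frac{n-1}{n}p < \max(q_1, \ldots, q_n)$ becomes $\frac{n-1}{n}p < q$, which is also assumed. Finally, since $q \le p$, the quantity $\min(q_1, \ldots, q_n, p)$ equals $\min(q, p) = q$, so the requirement $0 < t < \min(q_1, \ldots, q_n, p)$ reads precisely $0 < t < q$.

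Next, I would invoke the observation recorded in Remark (i) of Section \ref{sec mixed Morrey spaces}: when all coordinates of $\vec{q}$ coincide with a single value $q$, the mixed Morrey norm $\|\cdot\|_{\mathcal{M}^p_{\vec{q}}(\mathbb{R}^n)}$ reduces to the classical Morrey norm $\|\cdot\|_{\mathcal{M}^p_q(\mathbb{R}^n)}$, and the two spaces coincide with equality of norms. Applying this identification to both sides of the inequality provided by Theorem \ref{thm 180129-1} converts it into exactly the inequality asserted in the corollary.

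There is no genuine obstacle here, since the technical core, namely Proposition \ref{prop 180129-5} together with the extrapolation argument that propagates the $A_{q_k}$-weighted bound to all indices, is already absorbed into Theorem \ref{thm 180129-1}. The corollary is therefore a bookkeeping consequence of the mixed-space result and requires no further estimate.
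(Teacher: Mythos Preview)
Your proposal is correct and follows exactly the paper's approach: the paper's proof consists of the single sentence ``In Theorem \ref{thm 180129-1}, letting $q_j=q$, we conclude the result,'' and you have simply spelled out the routine verification of hypotheses and the norm identification that this entails.
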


\begin{proof}
In Theorem \ref{thm 180129-1}, letting $q_j=q$, we conclude the result. 
\end{proof}

\section{Proof of Theorem \ref{thm 171219-1} and \ref{thm 180514-1}}
\label{sec fractional}

In the beginning of this section, we show the boundedness of the fractional integral operator.
We follow the idea of Tanaka \cite{Tanaka}.


\begin{proof}
Fix $x \in \mathbb{R}^n$.
Without loss of generality, we may asuume that $f$ is non-negative and $I_{\alpha}f(x)$ is finite.
Then, we see that there exists $R>0$ such that 
\[
\int_{\{|x-y|\le R\}}\frac{f(y)}{|x-y|^{n-\alpha}}{\rm d}y=\frac{I_{\alpha}f(x)}{2}.
\]
We shall obtain two estimates.
First,
\begin{align*}
\frac{I_{\alpha}f(x)}{2}
&=
\int_{\{|x-y|\le R\}}\frac{f(y)}{|x-y|^{n-\alpha}}{\rm d}y\\
&=
\sum_{j=-\infty}^0
\int_{\{2^{j-1}R<|x-y|\le 2^jR\}}\frac{f(y)}{|x-y|^{n-\alpha}}{\rm d}y\\
&\lesssim
\sum_{j=-\infty}^0
\frac{(2^jR)^{\alpha}}{(2^jR)^{n}}\int_{\{|x-y|\le 2^jR\}}f(y){\rm d}y\\
&\le
Mf(x)\sum_{j=-\infty}^0(2^jR)^{\alpha}\\
&\sim
R^{\alpha}Mf(x).
\end{align*}
Second,
\begin{align*}
\frac{I_{\alpha}f(x)}{2}
&=
\int_{\{|x-y|\le R\}}\frac{f(y)}{|x-y|^{n-\alpha}}{\rm d}y
=
\sum_{j=1}^{\infty}
\int_{\{2^{j-1}R<|x-y|\le 2^jR\}}\frac{f(y)}{|x-y|^{n-\alpha}}{\rm d}y
\end{align*}
Using Proposition \ref{prop 180521-1}, we get
\begin{align*}
\frac{I_{\alpha}f(x)}{2}
&\lesssim
\sum_{j=1}^{\infty}
\frac{(2^jR)^{\alpha}}{(2^jR)^{n}}\int_{\{|x-y|\le 2^jR\}}f(y){\rm d}y\\
&=
\sum_{j=1}^{\infty}
\frac{(2^jR)^{\alpha}}{(2^jR)^{\frac{n}{p}}}
(2^jR)^{n\left(\frac{1}{p}-1\right)}
\int_{\{|x-y|\le 2^jR\}}f(y){\rm d}y\\
&\le
\|f\|_{\mathcal{M}_1^p(\mathbb{R}^n)}
\sum_{j=1}^{\infty}
\frac{(2^jR)^{\alpha}}{(2^jR)^{\frac{n}{p}}}\\
&\sim
R^{\alpha-\frac{n}{p}}\|f\|_{\mathcal{M}_1^p(\mathbb{R}^n)}
\le
R^{\alpha-\frac{n}{p}}\|f\|_{\mathcal{M}_{\vec{q}}^p(\mathbb{R}^n)}
=
R^{-\frac{n}{r}}\|f\|_{\mathcal{M}_{\vec{q}}^p(\mathbb{R}^n)}.
\end{align*}
Thus, we obtain
\begin{align*}
I_{\alpha}f(x)
\lesssim
\min(R^{\alpha}Mf(x), R^{-\frac{n}{r}}\|f\|_{\mathcal{M}_{\vec{q}}^p(\mathbb{R}^n)}).
\end{align*}
We now delete the factor $R$ by the following argument:
\begin{align*}
I_{\alpha}f(x)
&\lesssim
\min(R^{\alpha}Mf(x), R^{-\frac{n}{r}}\|f\|_{\mathcal{M}_{\vec{q}}^p(\mathbb{R}^n)})\\
&\le
\sup_{t>0}\min(t^{\alpha}Mf(x), t^{-\frac{n}{r}}\|f\|_{\mathcal{M}_{\vec{q}}^p(\mathbb{R}^n)})\\
&=
\|f\|_{\mathcal{M}_{\vec{q}}^p(\mathbb{R}^n)}^{\frac{p\alpha}{n}}
Mf(x)^{1-\frac{p\alpha}{n}},
\end{align*}
where we use the condition $\frac{1}{r}=\frac{1}{p}-\frac{\alpha}{n}$.
It follows from the conditions 
\[
\frac{1}{r}=\frac{1}{p}-\frac{\alpha}{n}, 
\]
that
\[
1-\frac{p\alpha}{n}=\frac{p}{r}.
\]
Thus, we get 
\[
I_{\alpha}f(x)\lesssim \|f\|_{\mathcal{M}_{\vec{q}}^p(\mathbb{R}^n)}^{1-\frac{p}{r}}
Mf(x)^{\frac{p}{r}}.
\]
This pointwise estimate gives us that
\[
\|I_{\alpha}f\|_{\mathcal{M}_{\vec{s}}^r(\mathbb{R}^n)}
\lesssim \|f\|_{\mathcal{M}_{\vec{q}}^p(\mathbb{R}^n)}^{1-\frac{p}{r}}
\|\left[Mf\right]^{\frac{p}{r}}\|_{\mathcal{M}_{\vec{s}}^r(\mathbb{R}^n)}.
\]
Since
\[
\frac{q_1}{s_1}=\cdots=\frac{q_n}{s_n}=\frac{p}{r},
\]
we have
\begin{align*}
\|\left[Mf\right]^{\frac{p}{r}}\|_{\mathcal{M}_{\vec{s}}^r(\mathbb{R}^n)}
=
\|Mf\|_{\mathcal{M}_{\frac{p}{r}\vec{s}}^{r\frac{p}{r}}(\mathbb{R}^n)}^{\frac{p}{r}}
=
\|Mf\|_{\mathcal{M}_{\vec{q}}^{p}(\mathbb{R}^n)}^{\frac{p}{r}}.
\end{align*}
Thus using Theorem \ref{thm 180122-1}, we obtain
\begin{align*}
\|I_{\alpha}f\|_{\mathcal{M}_{\vec{s}}^r(\mathbb{R}^n)}
&\lesssim \|f\|_{\mathcal{M}_{\vec{q}}^p(\mathbb{R}^n)}^{1-\frac{p}{r}}
\|\left[Mf\right]^{\frac{p}{r}}\|_{\mathcal{M}_{\vec{s}}^r(\mathbb{R}^n)}\\
&=
 \|f\|_{\mathcal{M}_{\vec{q}}^p(\mathbb{R}^n)}^{1-\frac{p}{r}}
\|Mf\|_{\mathcal{M}_{\vec{q}}^p(\mathbb{R}^n)}^{\frac{p}{r}}\\
&\lesssim
 \|f\|_{\mathcal{M}_{\vec{q}}^p(\mathbb{R}^n)}^{1-\frac{p}{r}}
\|f\|_{\mathcal{M}_{\vec{q}}^p(\mathbb{R}^n)}^{\frac{p}{r}}\\
&=\|f\|_{\mathcal{M}_{\vec{q}}^p(\mathbb{R}^n)}.
\end{align*}
\end{proof}

Next, we prove the boundedness of the singular integral operators.
The following theorem seems unknown. Here we include a short proof.
\begin{theorem} \label{thm 180604-1}
Let $1<\vec{q}<\infty$. Then,
\[
\|Tf\|_{\vec{q}}\lesssim\|f\|_{\vec{q}}
\]
for $f \in L^{\vec{q}}(\mathbb{R}^n)$.
\end{theorem}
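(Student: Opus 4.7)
My plan is to derive this from classical weighted Calder\'on--Zygmund theory, in the spirit of Benedek--Calder\'on--Panzone. First, I would recall the standard fact (which follows from the kernel hypotheses together with the $L^2$-boundedness of $T$) that $T$ is bounded on $L^p(\mathbb{R}^n,W)$ for every $1<p<\infty$ and every $W\in A_p(\mathbb{R}^n)$; indeed, by Proposition \ref{thm:161125-1} it would suffice to verify this for a single value of $p$.

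The key observation is that if $w\in A_{q_i}(\mathbb{R})$ is a 1D weight and $W(x)=w(x_i)$ is its trivial extension to $\mathbb{R}^n$, then $W\in A_{q_i}(\mathbb{R}^n)$ with the same characteristic constant (a direct check using the factorization of integrals over rectangles $Q=I_1\times\cdots\times I_n$). This delivers the weighted bound
$$\int_{\mathbb{R}^n}|Tf(x)|^{q_i}\,w(x_i)\,dx\lesssim\int_{\mathbb{R}^n}|f(x)|^{q_i}\,w(x_i)\,dx.$$

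With this in hand I would mimic the induction on $n$ used in the proof of Theorem \ref{thm 171206-1}. The base case $n=1$ is the classical $L^{q_1}$-boundedness of $T$. For the inductive step I would aim to establish a weighted, $L^{(q_1,\ldots,q_{n-1})}$-valued analogue of the above inequality, namely
$$\bigl\|\,\|Tf\|_{(q_1,\ldots,q_{n-1})}\,w_n(\cdot_n)^{1/q_n}\,\bigr\|_{(q_n)}\lesssim \bigl\|\,\|f\|_{(q_1,\ldots,q_{n-1})}\,w_n(\cdot_n)^{1/q_n}\,\bigr\|_{(q_n)}$$
for $w_n\in A_{q_n}(\mathbb{R})$, playing the role that Lemma \ref{lem 180119-4} plays for $M_n$. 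Combined with the inductive hypothesis applied in the variables $(x_1,\ldots,x_{n-1})$, this yields the mixed-norm bound for $T$, after which the result at general $\vec{q}$ follows exactly as in the closing line of the proof of Theorem \ref{thm 171206-1}.

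The main obstacle is that $T$ does not factor through a single coordinate the way $M_n\cdots M_1$ does, so the coordinate-by-coordinate iteration available for the iterated maximal operator is not directly applicable. One route is to invoke the Benedek--Calder\'on--Panzone vector-valued Calder\'on--Zygmund theorem, viewing $T$ as a singular integral on $L^{(q_1,\ldots,q_{n-1})}(\mathbb{R}^{n-1})$-valued functions of $x_n$, which reduces matters to checking that the scalar kernel conditions transfer to the Banach-space-valued setting. The alternative, staying closer to the tools already developed in the paper, is to apply the Rubio de Francia extrapolation (Proposition \ref{thm:161125-1}) iteratively in each coordinate, using tensor-product weights $\prod_i w_i(x_i)$ with $w_i\in A_{q_i}(\mathbb{R})$; such products still lie in $A_p(\mathbb{R}^n)$ for $p=\max_i q_i$, and the extrapolation algorithm can then manufacture weights that reproduce each factor of the mixed norm one at a time. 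Either route completes the proof.
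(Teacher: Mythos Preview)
Your proposal takes a route quite different from the paper's, and as written it has a genuine gap. The paper's proof avoids induction on $n$ entirely: one writes $\vec q=\theta\vec r$ with $\theta>1$ and $\vec r>1$, uses $L^{\vec r}$--$L^{\vec{r'}}$ duality to realize $\||Tf|^\theta\|_{\vec r}=\int_{\mathbb{R}^n}|Tf|^\theta g$ with $\|g\|_{\vec{r'}}=1$, dominates $g$ by a Coifman--Rochberg $A_1$ weight built from $M$ applied to a power of $g$, invokes the classical weighted $L^\theta(\mathbb{R}^n)$ bound for $T$ \emph{once}, and closes with H\"older together with the $L^{\eta\vec{r'}}$-boundedness of $M$. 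This is a one-shot Rubio de Francia argument carried out through mixed-norm duality, using only tools already established in the paper.

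The induction you propose cannot be set up as in Theorem~\ref{thm 171206-1}: there the operator $M_n\cdots M_1$ factors, so after Lemma~\ref{lem 180119-4} peels off $M_n$ the $(n-1)$-dimensional inductive hypothesis literally applies to $M_{n-1}\cdots M_1$; for a general $n$-dimensional $T$ there is no $(n-1)$-dimensional operator to hand the induction to, so the phrase ``inductive hypothesis applied in the variables $(x_1,\ldots,x_{n-1})$'' has no content. Your route (a) via Benedek--Calder\'on--Panzone would need $T$ to act as a one-dimensional singular integral in $x_n$ on Banach-space-valued functions; since $T$ is not assumed to be of convolution type, there is no operator-valued kernel in $x_n$ alone to which that machinery applies. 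Your route (b) works cleanly in dimension $2$ --- taking $W(x)=w_2(x_2)\in A_{q_1}(\mathbb{R}^2)$ gives $\int\|Tf(\cdot,x_2)\|_{q_1}^{q_1}w_2\,dx_2\lesssim\int\|f(\cdot,x_2)\|_{q_1}^{q_1}w_2\,dx_2$, and a single $1$D extrapolation moves the outer exponent to $q_2$ --- but for $n\ge3$ the naive iteration breaks down: one-parameter extrapolation can only change the \emph{outermost} exponent, and after one step the remaining variables are trapped inside a power, so the next weight can no longer be inserted in the correct slot to reproduce the mixed norm. What is actually needed is a multiparameter extrapolation theorem (product $A_{\vec p}$ weights), which Proposition~\ref{thm:161125-1} does not supply and your sketch does not develop. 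The paper's duality trick is precisely the device that sidesteps this difficulty.
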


\begin{proof}
Put $\vec{q}=\theta\vec{r}$, where $\theta>1$ and $\vec{r}>1$.
Then, using the $L^{\vec{r}}(\mathbb{R}^n)$-$L^{\vec{r'}}(\mathbb{R}^n)$ duality argument, for $g\in L^{\vec{r'}}(\mathbb{R}^n)$, we have
\begin{align*}
\|Tf\|_{\vec{q}}=\left\||Tf|^\theta\right\|_{\vec{r}}^\frac1\theta
=\left(\int_{\mathbb{R}^n}|Tf(x)|^\theta g(x){\rm d}x\right)^\frac1\theta.
\end{align*}
Since $g(x)\le M[|g|^\frac1\eta](x)^\eta$ and $M[|g|^\frac1\eta]^\eta \in A_1$ for $\eta>1$, we get
\begin{align*}
\|Tf\|_{\vec{q}}
\le
\left(\int_{\mathbb{R}^n}|Tf(x)|^\theta M[|g|^\frac1\eta](x)^\eta{\rm d}x\right)^\frac1\theta
\lesssim
\left(\int_{\mathbb{R}^n}|f(x)|^\theta M[|g|^\frac1\eta](x)^\eta{\rm d}x\right)^\frac1\theta.
\end{align*}  
By H\"older's inequality and the boundedness of the Hardy--Littlewood maximal operator,
\begin{align*}
\|Tf\|_{\vec{q}}
\lesssim
\left\||f|^\theta\right\|_{\vec{r}}^\frac1\theta\left\| \left(M[|g|^\frac1\eta]\right)^\eta\right\|_{\vec{r'}}
\lesssim
\left\||f|\right\|_{\theta\vec{r}}\left\||g|^\frac1\eta\right\|_{\eta\vec{r'}}^\eta
=\|f\|_{\vec{q}}\|g\|_{\vec{r'}}.
\end{align*}
Thus, the result holds.
\end{proof}

We recall the theorem \ref{thm 180514-1}.

\begin{theorem} 
Let $1<\vec{q}<\infty$ and $1<p<\infty$ satysfy 
\[
\frac np \le\sum_{j=1}^n\frac1q_j.
\]
Then, if we restrict $T$ to $\mathcal{M}_{\vec{q}}^p(\mathbb{R}^n)$, which is intially defined on 
$\mathcal{M}_{\min(q_1, \ldots, q_n)}^p(\mathbb{R}^n)$, 
\begin{equation}\label{eq 180514-2}
\|Tf\|_{\mathcal{M}_{\vec{q}}^p(\mathbb{R}^n)} \lesssim \|f\|_{\mathcal{M}_{\vec{q}}^p(\mathbb{R}^n)} 
\end{equation}
for $f \in \mathcal{M}_{\vec{q}}^p(\mathbb{R}^n)$.
\end{theorem}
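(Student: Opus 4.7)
The plan is to mimic the proof of Theorem \ref{thm 180122-1} exactly, replacing the boundedness of $M$ on $L^{\vec{q}}({\mathbb R}^n)$ (Bagby's inequality) by the boundedness of $T$ on $L^{\vec{q}}({\mathbb R}^n)$, which is Theorem \ref{thm 180604-1}. Fix a cube $Q=Q(x,r)$, and decompose
\[
f = f_1 + f_2, \qquad f_1 = f\chi_{Q(x,5r)},\quad f_2 = f\chi_{{\mathbb R}^n\setminus Q(x,5r)},
\]
so that $|Tf|\le |Tf_1|+|Tf_2|$ by linearity. It then suffices to bound $|Q|^{1/p-(1/n)\sum 1/q_j}\|(Tf_i)\chi_Q\|_{\vec q}$ by $\|f\|_{\mathcal{M}^p_{\vec q}}$ for $i=1,2$ and take the supremum over $Q$.

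For the local part $Tf_1$, Theorem \ref{thm 180604-1} yields
\[
|Q|^{\frac1p-\frac1n\sum_{j=1}^n\frac1{q_j}}\|(Tf_1)\chi_Q\|_{\vec q}
\le |Q(x,5r)|^{\frac1p-\frac1n\sum_{j=1}^n\frac1{q_j}}\|T f_1\|_{\vec q}
\lesssim |Q(x,5r)|^{\frac1p-\frac1n\sum_{j=1}^n\frac1{q_j}}\|f\chi_{Q(x,5r)}\|_{\vec q}
\le \|f\|_{\mathcal{M}^p_{\vec q}}.
\]
For the far part $Tf_2$, the key is the pointwise estimate
\[
|Tf_2(y)| \;\lesssim\; \sup_{Q\subset R\in\mathcal{Q}} \frac{1}{|R|}\int_R |f(z)|\,{\rm d}z \qquad (y\in Q),
\]
which is the analogue of Proposition \ref{ex 171103-1} for $T$. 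It follows from the size estimate (1) on $k$ by the standard dyadic annular decomposition: for $y\in Q$ and $z\notin 5Q$ one has $|y-z|\gtrsim|c_Q-z|$, whence $\int_{5Q^c}|k(y,z)f(z)|\,dz\lesssim\sum_{k\ge 1}|2^kQ|^{-1}\int_{2^kQ}|f(z)|\,dz$, which is dominated by the sup above. Once this pointwise bound is in hand, the remainder of the argument is verbatim the one used in the last display of the proof of Theorem \ref{thm 180122-1}: one applies Example \ref{ex 171108-1} to evaluate $\|\chi_Q\|_{\vec q}=|Q|^{(1/n)\sum 1/q_j}$, and then the embedding $\mathcal{M}^p_{\vec q}({\mathbb R}^n)\hookrightarrow \mathcal{M}^p_1({\mathbb R}^n)$ of Proposition \ref{prop 180521-1} to conclude
\[
|Q|^{\frac1p-\frac1n\sum_{j=1}^n\frac1{q_j}}\|(Tf_2)\chi_Q\|_{\vec q}
\le \sup_{R\in\mathcal{Q}}|R|^{\frac1p-1}\int_R|f(z)|\,{\rm d}z
= \|f\|_{\mathcal{M}^p_1}\le\|f\|_{\mathcal{M}^p_{\vec q}}.
\]

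The only genuine work is the pointwise estimate for $Tf_2$; it is the natural obstacle because one must ensure that the representation $Tf_2(y)=\int k(y,z)f_2(z)\,dz$ makes sense pointwise on $Q$. This is handled by the hypothesis that $T$, initially defined on $\mathcal{M}^p_{\min q_j}({\mathbb R}^n)$, has been extended to $\mathcal{M}^p_{\vec q}({\mathbb R}^n)$ (as stressed in the theorem), so $Tf_2$ is well defined, and for $y\in Q$ the integral defining $Tf_2(y)$ converges absolutely thanks to the embedding $\mathcal{M}^p_{\vec q}\hookrightarrow\mathcal{M}^p_1$ and the dyadic decomposition above. Note that only the size condition (1) on the kernel is used; the smoothness condition (2) is unnecessary here because no cancellation across the kernel is invoked. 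Taking the supremum over all cubes $Q$ finishes the proof.
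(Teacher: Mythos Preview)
Your local estimate for $Tf_1$ is fine and matches the paper's. The gap is in your treatment of $Tf_2$. You write that the dyadic decomposition gives
\[
|Tf_2(y)|\lesssim\sum_{k\ge 1}|2^kQ|^{-1}\int_{2^kQ}|f(z)|\,{\rm d}z,
\]
and then claim this sum ``is dominated by the sup above,'' namely $\sup_{Q\subset R}|R|^{-1}\int_R|f|$. That step is false: each summand is bounded by the supremum, but there are infinitely many summands, and no universal constant controls the series by a single term (take $f\equiv1$ to see the sum diverge while the sup equals $1$). Proposition~\ref{ex 171103-1} works for $M$ precisely because $M$ is itself a supremum, so only one scale contributes; for the singular integral $T$ all dyadic scales contribute, and you must sum them.

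The fix---and this is exactly what the paper does---is to insert the Morrey decay before summing. From $\mathcal M^p_{\vec q}\hookrightarrow\mathcal M^p_1$ one has
\[
|2^kQ|^{-1}\int_{2^kQ}|f|\le|2^kQ|^{-1/p}\|f\|_{\mathcal M^p_1}\le|2^kQ|^{-1/p}\|f\|_{\mathcal M^p_{\vec q}},
\]
so the series is geometric and yields the pointwise bound $|Tf_2(y)|\lesssim|Q|^{-1/p}\|f\|_{\mathcal M^p_{\vec q}}$ for $y\in Q$. Multiplying by $\chi_Q$, taking the $L^{\vec q}$-norm, and using Example~\ref{ex 171108-1} then gives the desired estimate. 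With this correction your argument coincides with the paper's (the paper uses $2Q$ instead of $5Q$ and writes the sum as an integral $\int_{2r}^\infty \ell^{-1}f_\ell(x)\,{\rm d}\ell$, but the substance is identical).
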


\begin{proof}
Let $f \in \mathcal{M}_{\vec{q}}^p(\mathbb{R}^n)$ and $f=f\chi_{2Q}+f\chi_{(2Q)^c} \equiv f_1+f_2$ for any cube 
$Q=Q(z,s)$.
Then, since $T$ is bounded on $L^{\vec{q}}(\mathbb{R}^n)$ by Theorem \ref{thm 180604-1} and 
$f \in L^{\vec{q}}(\mathbb{R}^n)$,
\begin{align} \label{eq 180514-3}
|Q|^{\frac 1p-\frac1n\sum_{j=1}^n\frac1q_j}\|(Tf_1)\chi_Q\|_{\vec{q}}
&\le
|Q|^{\frac 1p-\frac1n\sum_{j=1}^n\frac1q_j}\|Tf_1\|_{\vec{q}} \nonumber \\
&\lesssim
|Q|^{\frac 1p-\frac1n\sum_{j=1}^n\frac1q_j}\|f_1\|_{\vec{q}}
\le
\|f\|_{\mathcal{M}_{\vec{q}}^p(\mathbb{R}^n)}. 
\end{align}
Fix $x \in Q$ and put
\[
f_r(x)=\frac{1}{r^n}\int_{Q(x,r)}|f(y)|{\rm d}y.
\]
Then, by H\"older's inequality, we have
\begin{align*}
|f_r(x)|
&\le \frac{1}{r^n} |Q(x,r)|^{\frac1n\sum_{j=1}^n\frac{1}{q'_j}} \|f\chi_{Q(x,r)}\|_{\vec{q}}\nonumber
\sim  \frac{1}{r^n} r^{\sum_{j=1}^n\frac{1}{q'_j}} \|f\chi_{Q(x,r)}\|_{\vec{q}}\\ 
&=r^{-\frac{n}{p}}r^{\frac{n}{p}-\sum_{j=1}^n\frac{1}{q_j}}\|f\chi_{Q(x,r)}\|_{\vec{q}}
\le r^{-\frac{n}{p}}\|f\|_{\mathcal{M}_{\vec{q}}^p(\mathbb{R}^n)}.
\end{align*}
Thus,
\begin{align}\label{eq 180514-4}
|Tf_2(x)|
&\le
\int_{(2Q)^c}|k(x,y)||f(y)|{\rm d}y
\lesssim
\int_{(2Q)^c}\frac{|f(y)|}{|x-y|^n}{\rm d}y
\lesssim
\int_{2r}^\infty \frac{1}{\ell}f_{\ell}(x) {\rm d}\ell \nonumber\\
&\lesssim
\|f\|_{\mathcal{M}_{\vec{q}}^p(\mathbb{R}^n)}\int_{2r}^\infty \ell^{-\frac{n}{p}-1} {\rm d}\ell
\lesssim
r^{-\frac{n}{p}}\|f\|_{\mathcal{M}_{\vec{q}}^p(\mathbb{R}^n)}.
\end{align}
Thus,  by (\ref{eq 180514-4}), we obtain
\begin{align} \label{eq 180514-5}
|Q|^{\frac 1p-\frac1n\sum_{j=1}^n\frac1q_j}\|(Tf_2)\chi_Q\|_{\vec{q}}
&\lesssim
|Q|^{\frac 1p-\frac1n\sum_{j=1}^n\frac1q_j}r^{-\frac{n}{p}}\|f\|_{\mathcal{M}_{\vec{q}}^p(\mathbb{R}^n)}\|\chi_Q\|_{\vec{q}}
=
\|f\|_{\mathcal{M}_{\vec{q}}^p(\mathbb{R}^n)}.
\end{align}
By (\ref{eq 180514-3}) and (\ref{eq 180514-5}), we get the result.
\end{proof}

\section*{Acknowledgement}

The author would like to thank Professor Yoshihiro Sawano for enthusiastic guidance
and be also thankful to Professor Hitoshi Tanaka
for his kind suggestion on the fractional integral operators.

\end{document}